\def\beq{\begin{equation}}
\def\eeq{\end{equation}}
\def\ben{\begin{enumerate}}
\def\een{\end{enumerate}}
\def\RR{\mathbb{R}}
\def\CC{\mathbb{C}}
\def\NN{\mathbb{N}}
\def\HH{\mathbb{H}}
\def\ax{\langle x \rangle}
\def\axs{\langle x, x^{\ast} \rangle}
\def\cB{ {\mathcal B} }
\def\cC{ {\mathcal C} }
\def\cG{ {\mathcal G} }
\def\cH{ {\mathcal H} }
\def\cI{ {\mathcal I} }
\def\cM{{\mathcal M}}
\def\cZ{ {\mathcal Z} }
\def\azs{\langle z, z^* \rangle}
\newcommand{\setmult}[2]{#1 #2}
\newcommand{\leftact}[2]{#1 \cdot #2}
\newcommand{\lead}[1]{\operatorname{lm}(#1)}
\newcommand{\nonlead}[1]{\operatorname{Nlm}(#1)}
\newcommand{\Id}[1]{1_{#1}}
\def\finite{\mbox{\tiny finite}}
\def\real{\rm rr}
\def\rC{\mathfrak{C}}
\newtheorem{theorem}{Theorem}[section]
\newtheorem{lemma}[theorem]{Lemma}
\newtheorem{thm}[theorem]{Theorem}
\newtheorem{prop}[theorem]{Proposition}
\theoremstyle{definition}
\newtheorem{exa}[theorem]{Example}
\newtheorem{corollary}[theorem]{Corollary}
\def\subset{\subseteq}
\def\supset{\supseteq}
\newcommand{\rr}[1]{\sqrt[\real]{#1}}
\renewcommand{\tilde}{\widetilde}
\numberwithin{equation}{section}
\begin{document}

\title[Non-Commutative Nullstellensatz]{A Real Nullstellensatz for Matrices of 
Non-Commutative Polynomials }

\author{Christopher S. Nelson}

\subjclass{16W10, 16S10, 16Z05, 14P99, 14A22, 47Lxx, 13J30}

\keywords{noncommutative real algebraic geometry, algebras with involution,
free algebras, matrix polynomials, symbolic computation}

\iffalse
\begin{comment}
\end{comment}
\fi

\begin{abstract}
This article extends the classical Real Nullstellensatz
to matrices of polynomials in a free $\ast$-algebra
$\RR\axs$ with $x=(x_1, \ldots, x_n)$.  This result is a generalization of a
result of Cimpri\v c, Helton, McCullough, and the author.

In the free left $\RR\axs$-module $\RR^{1 \times \ell}\axs$
we introduce notions of the
(noncommutative) zero set of a
left $\RR\axs$-submodule and of a real left $\RR\axs$-submodule. We prove that
every element from $\RR^{1 \times \ell}\axs$ whose zero set contains the
intersection of zero sets of
elements from a finite subset $S \subset \RR^{1 \times \ell}\axs$ belongs to the
smallest
real left $\RR\axs$-submodule containing $S$.  Using this, we derive a
nullstellensatz for matrices of polynomials in $\RR\axs$.

The other main contribution of this article is an efficient, implementable
algorithm which for every finite subset $S \subset \RR^{1 \times \ell}\axs$
computes the smallest real left $\RR\axs$-submodule containing $S$.  This
algorithm terminates in a finite number of steps.  By taking advantage of the
rigid structure of $\RR\axs$, the
algorithm presented here is an improvement upon the previously known
algorithm for $\RR\axs$.
\end{abstract}

\maketitle

\newpage

\section{Introduction}
\label{sect:intro}

 This article establishes a non-commutative analog
  of the classical real Nullstellensatz.
  The results of Krivine \cite{Kri1}, \cite{Kri2},
Dubois \cite{D69}, and Risler \cite{R70}
  established a real nullstellensatz in classical (commutative) real algebraic
geometry.
For a modern survey of real algebraic geometry (RAG), see the survey by
Scheiderer
\cite{sche} the book of Marshall \cite{mm3}, and the book of Bochnak, Coste and
Roy \cite{bcr}.

The main aim of this paper is to extend the real nullstellensatz to
non-commutative algebras, in particular free algebras.
The earliest such result was proved by George Bergman in an algebra without
involution, settling a conjecture of Helton and McCullough \cite{HM04}.
Unfortunately, in an algebra with involution, \cite{HM04} contains a
counterexample to extending Bergman's result. 
In \cite{HMP} a major case is settled
in a free $\ast$-algebra,
but much remained open.

A main thrust of real algebraic geometry, in addition to the nullstellensatz,
is the positivstellensatz.  This holds as well for non-commutative algebras,
and is an active area of research dating back to ideas of Putinar \cite{P} and
Helton and McCullough \cite{HM04}. There is a convex branch
of that subject, see \cite{HM12}, \cite{HKM12}, and \cite{KS}, and this
article feeds into that thereby underlying the sequel \cite{HKN13} to this
paper.

  This introduction is arranged as follows: in $\S$
\ref{sub:notation} some basic notation will be introduced; in $\S$
\ref{sub:commutInsp} we will discuss the commutative inspiration for the
non-commutative nullstellensatz; in $\S$  \ref{sub:NCpolys} we will lay out
basic definitions for non-commutative polynomials; in $\S$ \ref{sub:leftMods}
we discuss non-commutative analogs of zero sets and radicals; the main results
of the article are then presented in $\S$ \ref{sub:overview}; and finally an
outline of the article is given in $\S$ \ref{sub:guide}.

  Our approach to Noncommutative Real Algebraic Geometry is motivated
  by \cite{HMP}; for alternative approaches see \cite{sch2} and \cite{mm}.

\subsection{Notation}
\label{sub:notation}

Given positive integers $\nu$ and $\ell$,
let $\RR^{\nu \times \ell}$ denote the space of $\nu
\times \ell$ real matrices.
Let $E_{ij} \in \RR^{\nu \times \ell}$
\index{Eij@$E_{ij}$} denote
the matrix with a $1$ as the $ij^{th}$ entry and a $0$ for all other entries.
Let $e_j \in \RR^{1 \times \ell}$ \index{ej@$e_j$} denote
the
row vector with $1$ as
the $j^{th}$ entry and a $0$ as all other entries.
Let $\Id{\nu} \in \RR^{\nu \times \nu}$ denote the $\nu \times \nu$
identity matrix.
Let $A^* \in \RR^{\ell \times \nu}$ denote the transpose of a matrix $A \in
\RR^{\nu \times \ell}$.
Let $\mathbb{S}^{k} \subset \RR^{k \times k}$\index{S^k@$\mathbb{S}^k$} denote
the space of real symmetric $k \times k$ matrices.

Although our notation does not correspond to them, one who wishes a general
orientation to non-commutative algebras can see Goodearl \cite{Goo}, and for
free algebras see P. M. Cohn \cite{Coh}.

\subsection{Commutative Inspiration}
\label{sub:commutInsp}

If $I$ is an ideal in the set of commutative polynomials $\RR[x]$ with real
coefficients, we say $I$ is {\bf real} if whenever there are some
polynomials $p_i \in \RR[x]$ which satisfy
\[
 \sum_i^{\rm finite} p_i^2 \in I
\]
then each $p_i \in I$.
The real Nullstellensatz \cite{D69},
\cite{R70} states that if $q \in \RR[x]$, then $q(a) = 0$ for all tuples of
real scalars $a$ such that $p(a) = 0$ for each $p \in I$ if and only if $q$ is
in the {\bf real radical of $I$}, that is, the smallest real ideal containing
$I$.

A more recent result of Cimpri\v c \cite{Cim13} extends this result to matrices
of commutative polynomials.  A left submodule $I$ of the free
$\RR[x]$-module $\RR[x]^{1 \times \ell}$ is {\bf
real} if whenever $p_i \in \RR[x]^{1 \times \ell}$ satisfy
\[
 \sum_i^{\rm finite} p_i \otimes p_i \in \RR[x]^{\ell \times 1} \otimes I +
I \otimes \RR[x]^{1 \times \ell}
\]
then each $p_i \in I$.  On $\RR[x]^{1 \times \ell}$, Cimpri\v c's result states
that if $q
\in \RR[x]^{1 \times \ell}$, then $q(a) = 0$ for all tuples of
real scalars $a$ such that $p(a) = 0$ for each $p \in I$ if and only if $q$ is
in the {\bf real radical of $I$}, that is, the smallest real left submodule
containing
$I$.

\subsection{Non-Commutative Polynomials}
\label{sub:NCpolys}

We now turn our attention to the space of non-commutative polynomials.

Let $\axs$\index{$\axs$} denote the monoid freely generated by $x = (x_1,
\ldots, x_g)$ and $x^* = (x_1^*, \ldots, x_g^*)$---that is, $\axs$ consists of
words in the $2g$ free letters $x_1, \ldots, x_g, x_1^*, \ldots, x_g^*$,
including the empty word $\emptyset$, which plays the role of the identity $1$.
Let  $\RR\axs$\index{Rxxs@$\RR\axs$} denote the
$\RR$-algebra freely generated by $\axs$, i.e., the
elements of $\RR\axs$
are polynomials in the non-commuting variables $\axs$
with coefficients
in $\RR$.  Call elements of $\RR\axs$
\textbf{non-commutative}\index{non-commutative (NC) polynomials} or \textbf{NC}
polynomials.

The \textbf{involution}\index{involution} on
$\RR\axs$ is defined linearly so that $(x_i^*)^* = x_i$ for each variable $x_i$
and
$(pq)^* = q^*p^*$ for each $p, q \in \RR\axs$.
For example,
\[
 \left( x_1x_2x_3 + 2x_3^*x_1 - x_3\right)^* = x_3^*x_2^*x_1^* + 2x_1^*x_3 -
x_3^*
\]

\subsubsection{Evaluation of NC Polynomials}

NC polynomials can be evaluated at a tuple of matrices in a natural way.
 Let $X = (X_1, \ldots, X_g) \in \left( \RR^{n \times n} \right)^g$. Given $p
\in
\RR\axs$, let $p(X)$ denote the matrix defined by
replacing each $x_i$ in $p$ with $X_i$, each $x_i^*$ in $p$ with $X_i^{\ast}$,
and replacing the empty word with $\Id{n}$.  Note that $p^*(X) =
p(X)^*$ for all $p \in \RR\axs$.

For example, if
\[
p(x) = x_1^2 - 2x_1x_2^* -3, \quad
X_1 = \begin{pmatrix}
1&2\\
2&4
\end{pmatrix}
\quad
\mbox{and}
\quad
X_2 = \begin{pmatrix}
0&-1\\
1&-1
\end{pmatrix}
\]
then
\begin{align}
 \notag
p(X) &= X_1^2 - 2X_1 X_2^* - 3 (\Id{2}) \\
\notag
&=
\begin{pmatrix}
1&2\\
2&4
\end{pmatrix}\begin{pmatrix}
1&2\\
2&4
\end{pmatrix}
- 2 \begin{pmatrix}
1&2\\
2&4
\end{pmatrix}
\begin{pmatrix}
0&1\\
-1&-1
\end{pmatrix}
-
\begin{pmatrix}
3&0\\
0&3
\end{pmatrix}
\\
\notag
&
=
\left(
\begin{array}{cc}
 6 & 12 \\
 18 & 21 \\
\end{array}
\right)
\end{align}

\begin{comment}
\subsubsection{Symmetric Variables}

Let $\ax$ denote the monoid freely generated by $x = (x_1, \ldots, x_g)$,
and let $\RR\ax$ be the $\RR$ algebra freely generated by $\ax$.
We similarly define an involution on $\RR\axs$ such that $(pq)^* = q^*p^*$ for
each $p, q \in \RR\axs$, except we set $x_i^* = x_i$ for each variable.  In
this case, we say the variables $x$ are {\bf symmetric
variables}\index{symmetric variables}.
For example,
\[
 \left( x_1x_2 + x_3x_1 + x_3\right)^* = x_2x_1 + x_1x_3 + x_3.
\]

On $\RR\ax$, we only evaluate polynomials at tuples of symmetric matrices. If
$p \in \RR\ax$ and $X \in \left(\mathbb{S}^n\right)^g$, then $p(X)$ is the
matrix defined in the obvious way.

Throughout the rest of the section, we will focus only on $\RR\axs$, but note
that analogous definitions hold for $\RR\ax$.
\end{comment}

\subsubsection{Matrices of NC Polynomials}

The space of $\nu \times \ell$ matrices with entries in $\RR\axs$
will be
denoted as $\RR^{\nu \times \ell}\axs$. Each $p \in \RR^{\nu \times \ell}\axs$
can be expressed as
\[
p = \sum_{w \in \axs} A_w \otimes w \in \RR^{\nu \times \ell}\otimes \RR\axs.
\]
Given a tuple $X$ of real $n \times n$ matrices, let $p(X)$ denote
\[
p(X) = \sum_{w \in \axs} A_w \otimes w(X) \in \RR^{\nu n \times \ell n}
\]
where $\otimes$ denotes the Kronecker product.
The involution on $\RR^{\nu \times \ell}\axs$ is
given by
\[
p^*= \left(\sum_{w \in \axs} A_w \otimes w\right)^* = \sum_{w \in \axs}
A_w^{\ast} \otimes w^* \in \RR^{\ell \times \nu}\axs.
\]
Note that $p^*(X) = p(X)^*$ for any tuple $X$.
If $p \in \RR^{\nu \times \nu}\axs$, we say $p$
is \textbf{symmetric}\index{symmetric
polynomial} if $p = p^*$.

\subsubsection{Degree of NC Polynomials}
Let $|w|$ denote the \textbf{length}\index{length of a word in $\axs$} of a word
$w \in
\axs$.
A \textbf{monomial}\index{monomial} in $\RR^{\nu \times \ell}\axs$ is a
polynomial
of the form
$E_{ij} \otimes m$, where $m \in \axs$.
Let $\cM^{\nu \times \ell}$ denote the set of monomials in $\RR^{\nu \times
\ell}\axs$
The {\bf length} or
 \textbf{degree}\index{degree} of a monomial $E_{ij} \otimes m$ is $|E_{ij}
\otimes m| := |m|$.

If $p$ is a NC polynomial, define the degree of $p$, denoted $\deg(p)$, to be
the
largest degree of any monomial appearing in $p$.
A NC polynomial $p$ is \textbf{homogeneous of
degree
$d$}\index{degree!homogeneous}
if every monomial appearing in $p$ has degree $d$.
If $W$ is a subspace of $\RR^{\nu \times \ell}\axs$, define $W_d$
\index{Wd for a vector
space W in Rxxs@ $W_d$ for a vector space $W \subset
\RR\axs$}\index{RRvlxxsd@$\RR^{\nu \times \ell}\axs_d$} to be
the space spanned by all elements of $W$ with degree at most $d$.

\subsubsection{Operations on Sets}

If $A, B \subset \RR^{\nu \times
\ell}\axs$, then define $A + B$ to be
\[
 A + B := \{ a + b \mid a \in A, b \in B \} \subset \RR^{\nu \times \ell}\axs.
\]
In the case that
$A \cap B = \{0\}$, we also denote $A + B$ as $A \oplus B$\index{$\oplus$}; the
expression $A \oplus B$ always asserts that $A \cap B =
\{0\}$.
If $A \subset \RR^{\nu \times \ell} \axs$ and $B
\subset \RR^{\ell \times \rho} \axs$,
let $\setmult{A}{B}$\index{$\setmult{A}{B}$}
be
\[
 AB := \operatorname{Span}( \{ab \mid a \in A, b \in B \}) \subset \RR^{\nu
\times \rho}\axs.
\]
If $A \subset \RR^{\nu \times \ell}\axs$, let
\[A^{\ast}  := \{a^* \mid a
\in A\} \subset
\RR^{\ell
\times \nu}
\axs.\]
If $A \subset \RR^{\nu \times \ell}$ and $B \subset \RR\axs$, then $A \otimes B$
is
\[
 A \otimes B := \operatorname{Span}(  \{ a \otimes b \mid a \in A, b \in B \} ).
\]

If $p \in \RR^{\nu \times \ell}\axs$, then expressions of the form $p + A$,
$pB$,
$Cp$, $D \otimes p$, where $A$, $B$, $C$, and $D$, are sets, denote $\{p\} +
A$, $\{p\}B$, $C\{p\}$, and $D \otimes \{p\}$ respectively.

\subsection{Left \texorpdfstring{$\RR\axs$}{[R x xs]}-Modules}
\label{sub:leftMods}
For $\RR\axs$, there is a ``Non-Commutative Left Real Nullstellensatz''. Let
$p_1, \ldots, p_k, q \in
\RR\axs$.  If $q(X)v = 0$ for every $(X,v) \in \bigcup_{n \in \NN} \left( \RR^{n
\times n}
\right)^g \times \RR^n$ such that $p_1(X)v = \cdots = p_k(X)v = 0$, then $q$ is
an element
of the ``real radical'' of the left ideal generated by $p_1, \ldots, p_k$
\cite{chmn}.  To generalize this result to $\RR^{\nu \times \ell}\axs$, we now
generalize the notion of left ideal and real left ideal to non-square
matrices of NC polynomials.

The space
$\RR^{1 \times \ell}\axs$ is a
free left $\RR\axs$-module.
That is,
if $q \in \RR\axs$, $A  \in \RR^{1 \times \ell}$ and $r \in \RR\axs$, then
\[
\leftact{q}{(A \otimes r)} :=  (\Id{\nu} \otimes q)(A \otimes r)=
A \otimes qr.
\]
In the sequel, we will simplify notation by identifying $q$ with
$\Id{\nu}
\otimes q$ and
simply writing $q(A \otimes r)$ when we mean $\leftact{q}{(A \otimes r)}$.
We will also simplify our terminology by referring to left
$\RR\axs$-submodules $I \subset \RR^{1 \times \ell}\axs$ as
 {\bf left modules}.

\subsubsection{Real Left Modules}

Let $I \subset \RR^{1 \times \ell}\axs$ be a left module.
We say that $I$ is \textbf{real}\index{real!left module}
if whenever
\[ \sum_i^{\finite} p_i^*p_i \in \setmult{\RR^{\ell \times 1}}{I}
+
\setmult{I^{\ast}}{\RR^{1 \times \ell}} \]
for some $p_i \in \RR^{1 \times \ell}\axs$, then each $p_i \in I$.
Note that $\RR^{\ell \times 1}I$ is the subspace of $\ell \times \ell$
matrices whose rows are elements of $I$, and $(\RR^{\ell
\times 1}I)^* = I^*\RR^{1 \times \ell}$ is the subspace of $\ell \times \ell$
matrices whose columns are elements of $I^*$. 

The following result shows that defining a real left module in terms of only $1
\times \ell$ matrices actually covers $\nu \times \ell$ matrices for any
dimension $\nu$.

\begin{prop}
\label{prop:diffMiReal}
 A left module $I \subset \RR^{1 \times \ell}\axs$
is real if and only if
whenever
\begin{equation}
 \label{eq:diffMi}
\sum_i^{\finite} p_i^*p_i \in \setmult{\RR^{\ell \times 1}}{I} +
\setmult{I^{\ast}}{\RR^{1 \times \ell}},
\end{equation}
for some $p_i \in
\RR^{\nu_i
\times \ell}\axs$ and some $\nu_i \in \NN$, then
each $p_i \in \setmult{\RR^{\nu_i \times 1}}{I}$.
\end{prop}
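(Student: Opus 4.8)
One implication is immediate: if the condition in the statement holds, then taking every $\nu_i = 1$ and using $\setmult{\RR^{1 \times 1}}{I} = I$ recovers exactly the definition of $I$ being real. So assume conversely that $I$ is real, and suppose \eqref{eq:diffMi} holds for some $p_i \in \RR^{\nu_i \times \ell}\axs$. The plan is to reduce this matrix-valued expression $\sum_i p_i^* p_i$ to a sum $\sum_m q_m^* q_m$ with each $q_m \in \RR^{1 \times \ell}\axs$, to which the definition of ``real'' directly applies.

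First I would split each $p_i$ into rows: let $r_{i,1}, \dots, r_{i,\nu_i} \in \RR^{1 \times \ell}\axs$ be the rows of $p_i$, and let $\epsilon_j \in \RR^{\nu_i \times 1}$ denote the standard basis column vectors, so that $p_i = \sum_{j=1}^{\nu_i} \epsilon_j\, r_{i,j}$. An entrywise computation from the definition of the involution on $\RR^{\nu_i \times \ell}\axs$ gives $\epsilon_j^* \epsilon_k = \delta_{jk}$, so the cross terms vanish:
\[
p_i^* p_i = \sum_{j,k} r_{i,j}^*\,(\epsilon_j^* \epsilon_k)\, r_{i,k} = \sum_{j=1}^{\nu_i} r_{i,j}^* r_{i,j}.
\]
Summing over $i$ and relabelling the finitely many rows $r_{i,j}$ as $q_1, \dots, q_N \in \RR^{1 \times \ell}\axs$, the left-hand side of \eqref{eq:diffMi} becomes $\sum_{m=1}^N q_m^* q_m \in \setmult{\RR^{\ell \times 1}}{I} + \setmult{I^*}{\RR^{1 \times \ell}}$. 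This is precisely the hypothesis appearing in the definition of a real left module, applied to the $q_m$; hence each $q_m$, i.e.\ each row $r_{i,j}$ of each $p_i$, lies in $I$.

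It then remains to observe that this forces $p_i \in \setmult{\RR^{\nu_i \times 1}}{I}$: since $\epsilon_j \in \RR^{\nu_i \times 1}$ and $r_{i,j} \in I$, each product $\epsilon_j r_{i,j}$ lies in the span $\setmult{\RR^{\nu_i \times 1}}{I}$, hence so does $p_i = \sum_j \epsilon_j r_{i,j}$. I do not anticipate a real obstacle; the two points to handle carefully are the bookkeeping identity $p_i^* p_i = \sum_j r_{i,j}^* r_{i,j}$ — in particular checking that $\epsilon_j^* \epsilon_k$ is the scalar $0$ when $j \neq k$ so the off-diagonal terms drop — and the remark (already made in the square case $\nu_i = \ell$ in the paragraph preceding the proposition) that a matrix lies in $\setmult{\RR^{\nu_i \times 1}}{I}$ exactly when all of its rows lie in $I$, which is what makes the passage back and forth between $1 \times \ell$ and $\nu_i \times \ell$ matrices lossless.
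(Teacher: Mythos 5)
Your proof is correct and follows essentially the same route as the paper: the paper writes $p_i^*p_i = \sum_{j} (e_j^*p_i)^*(e_j^*p_i)$ using the decomposition $\Id{\nu_i} = \sum_j E_{jj}$, which is exactly your row decomposition $p_i = \sum_j \epsilon_j r_{i,j}$ with cross terms vanishing; the final step $p_i = \sum_j e_je_j^*p_i \in \setmult{\RR^{\nu_i\times 1}}{I}$ is likewise identical.
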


\begin{proof}
One direction is clear. For the converse, suppose $I$ is real, and suppose that
(\ref{eq:diffMi}) holds for some polynomials $p_i \in \RR^{\nu_i
\times \ell}\axs$.
For each $p_i$,
\[ p_i^*p_i =p_i^*\Id{\nu_i}p_i = \sum_{j=1}^{\nu_i} p_i^*  E_{jj
} p _ i =
\sum_{j=1}^{\nu_i}
(e_j^*p_i)^*(e_j^*p_i),\]
so that
\[ \sum_{i}^{\finite} p_i^*p_i = \sum_{i}^{\finite} \sum_{j=1}^{\nu_i}
(e_j^*p_i)^*(e_j^*p_i) \in \setmult{\RR^{\ell \times 1}}{I} +
\setmult{I^*}{\RR^{1 \times \ell}}.\]
Since $I$ is real, each $e_j^*p_i
\in I$.
Therefore, for each $i$,
$$p_i = \Id{\nu_i}p_i = \sum_{j=1}^{\nu_i} e_je_j^*p_i \in
\setmult{\RR^{\nu_i \times
1}}{I}. $$
\end{proof}

\subsubsection{The Real Radical}

An intersection of real left modules is itself a real left module.
Define the
\textbf{real radical}\index{real!radical} of a left module $I \subset \RR^{1
\times \ell}\axs$ to be
\[\rr{I} = \bigcap_{\substack{J \supseteq I,\\ J\ \mbox{\tiny real}}} J
=\text{the smallest real left module containing } I.\]

\subsubsection{Zero Sets of Left \texorpdfstring{$\RR\axs$}{[R x xs]}-Modules}

If $S \subset \RR^{1 \times \ell}\axs$, for each $n \in \NN$,
 define $V(S)^{(n)}$
to be
\[ V(S)^{(n)} := \{ (X,v) \in (\RR^{n \times n})^g \times \RR^{\ell n}
\mid p(X)v = 0\ \text{for every}\ p \in S\},\]
and define $V(S)$ to be
\[ V(S) := \bigcup_{n \in \NN} V(S)^{(n)}.\]
If $V \subset \bigcup_{n \in \NN} (\RR^{n \times n})^g \times
\RR^{\ell n}$, define $\cI(V)$ to be
\[ \cI(V) := \{ p \in \RR^{1 \times \ell}\axs \mid p(X)v = 0\ \text{for every}\
(X,v) \in V\}.\]
The set $\cI(V) \subset \RR^{1 \times \ell}\axs$ is clearly a left
module.
If $I \subset \RR^{1 \times \ell}\axs$ is a left module, define the
\textbf{(vanishing) radical}\index{radical!vanishing} of $I$ to be
\[\sqrt{I} := \cI(V(I)).\]
We say a left module is {\bf radical} if it is equal to its vanishing radical.

\begin{prop}
\label{prop:cIcCIsReal}
Let $V \subset \bigcup_{n \in
\NN} (\RR^{n \times n})^g \times
\RR^{\ell n}$. The space $\cI(V) \subset \RR^{1 \times \ell}\axs$ is a real left
module.
\end{prop}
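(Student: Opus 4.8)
The plan is to show directly that $\cI(V)$ satisfies the defining condition of a real left module. So suppose $p_1, \ldots, p_k \in \RR^{1 \times \ell}\axs$ satisfy
\[
\sum_{i}^{\finite} p_i^* p_i \in \setmult{\RR^{\ell \times 1}}{\cI(V)} + \setmult{\cI(V)^*}{\RR^{1 \times \ell}};
\]
we must deduce that each $p_i \in \cI(V)$, i.e.\ that $p_i(X)v = 0$ for every $(X,v) \in V$. Fix such a pair $(X,v)$, say with $X$ a tuple of $n \times n$ matrices and $v \in \RR^{\ell n}$.

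First I would unpack what the membership hypothesis says after evaluation at $(X,v)$. An element of $\setmult{\RR^{\ell \times 1}}{\cI(V)}$ is an $\ell \times \ell$ matrix polynomial each of whose rows lies in $\cI(V)$; hence when such an element $q$ is evaluated at $X$ and then applied to $v$, each block-row of $q(X)$ kills $v$, so $q(X)v = 0$. Consequently, for the transposed object, an element $q' \in \setmult{\cI(V)^*}{\RR^{1 \times \ell}}$ satisfies $v^* q'(X) = (q'^*(X) v)^* $... more precisely $q'(X)^* v = 0$, i.e.\ $v^* q'(X) = 0$. Writing the hypothesis as $\sum_i p_i^* p_i = q + q'$ with $q \in \setmult{\RR^{\ell \times 1}}{\cI(V)}$ and $q' \in \setmult{\cI(V)^*}{\RR^{1 \times \ell}} = (\setmult{\RR^{\ell \times 1}}{\cI(V)})^*$, and evaluating then multiplying by $v$ on the right and $v^*$ on the left, I get
\[
\sum_{i}^{\finite} v^* p_i(X)^* p_i(X) v = v^* q(X) v + v^* q'(X) v = 0,
\]
since $q(X)v = 0$ and $v^* q'(X) = 0$.

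Now each summand $v^* p_i(X)^* p_i(X) v = \| p_i(X) v \|^2 \geq 0$ is a nonnegative real number, and they sum to zero; hence $\| p_i(X) v \|^2 = 0$, so $p_i(X) v = 0$, for every $i$. Since $(X,v) \in V$ was arbitrary, each $p_i \in \cI(V)$, which is exactly what it means for $\cI(V)$ to be real. The only genuine bookkeeping point — the ``main obstacle,'' such as it is — is correctly identifying how the two summand subspaces $\setmult{\RR^{\ell \times 1}}{\cI(V)}$ and $\setmult{\cI(V)^*}{\RR^{1 \times \ell}}$ annihilate $v$ on the appropriate side after evaluation; once that is pinned down, the positivity argument is the same trace/norm trick used in the classical real Nullstellensatz. (One should note $\cI(V)$ is a left module by the remark preceding the statement, so it makes sense to speak of its being real.)
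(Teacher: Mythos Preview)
Your proof is correct and follows essentially the same approach as the paper: evaluate at an arbitrary $(X,v)\in V$, sandwich by $v^*$ and $v$, and use nonnegativity of the summands $\|p_i(X)v\|^2$. If anything, your version is slightly more careful than the paper's, since you explicitly separate the two pieces and note that $q(X)v=0$ while $v^*q'(X)=0$; the paper compresses this into a single line.
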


\begin{proof}
 Suppose
\[\sum_i^{\finite} p_i^*p_i \in \RR^{\ell \times 1} \cI(V) +  \cI(V)^*\RR^{1
\times \ell},\]
where each $p_i \in \RR^{1 \times \ell}\axs$.
For each $(X,v) \in V$, we have
\[\sum_i^{\finite} p_i(X)^*p_i(X)v = 0  \quad \Longrightarrow
\quad \sum_i^{\finite}
v^*p_i(X)^*p_i(X)v = 0. \]
Therefore each $p_i(X)v = 0$, which implies that each $p_i \in \cI(V)$.
\end{proof}

Proposition \ref{prop:cIcCIsReal} implies that for each left module
$I \subset \RR^{1
\times \ell}\axs$,
\[ I \subset \rr{I} \subset \sqrt{I}.\]

\subsection{Main Results}
\label{sub:overview}

Here is the main result of this article, which is a generalization of
\cite[Theorem 1.6]{chmn} to the matrix case.
\begin{theorem}
\label{thm:mainFromNotes}
 Let $p_1, \ldots, p_k$ be such that each
$p_i \in \RR^{\nu_i \times \ell}\axs$ for some $\nu_i \in \NN$.
Define
\[
 J_\nu := \setmult{\RR^{\nu \times 1}}{\rr{\sum_{i=1}^k
\setmult{\RR^{1
\times \nu_i}\axs}{ p_i}}}
\]
for $\nu \in \NN$.
Let
$q \in \RR^{\nu \times \ell}\axs$.
% for some $\nu \in \NN$.
Then $q(X)v = 0$ for all $(X,v) \in  \bigcup_{n \in \NN} (\RR^{n \times n})^g
\times
\RR^{\ell n}$ such that
$p_1(X)v, \ldots, p_k(X)v = 0$ if and only if  $q \in J_\nu$.

Consequently, if the left module
\begin{equation}
\label{eq:checkIReal}
\sum_{i=1}^k
\RR^{1
\times \nu_i}\axs p_i
\end{equation}
is real, and if $q(X)v = 0$
whenever $p_1(X)v, \ldots, p_k(X)v = 0$, then
$q$ is of the form
\[q = r_1p_1 + \cdots + r_k p_k,\]
where each $r_i \in \RR^{\nu \times \nu_i}\axs$.
\end{theorem}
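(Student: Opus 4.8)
\emph{Plan of proof.} The ``consequently'' clause is immediate once the stated equivalence is in hand: if $M := \sum_{i=1}^k \setmult{\RR^{1\times\nu_i}\axs}{p_i}$ is real then $\rr{M}=M$, and expanding a constant column in $\RR^{\nu\times1}$ against a left $\RR\axs$-combination of the $p_i$ shows $\setmult{\RR^{\nu\times1}}{M}\subseteq\sum_i\RR^{\nu\times\nu_i}\axs\,p_i$, so $q\in J_\nu$ already exhibits $q=r_1p_1+\cdots+r_kp_k$ with $r_i\in\RR^{\nu\times\nu_i}\axs$. Everything therefore reduces to the ``if and only if''. The forward (easy) direction is direct: if $q\in J_\nu=\setmult{\RR^{\nu\times1}}{\rr{M}}$, write $q=\sum_t A_tm_t$ with $A_t\in\RR^{\nu\times1}$ constant and $m_t\in\rr{M}$; whenever $p_1(X)v=\cdots=p_k(X)v=0$ we get $m(X)v=\sum_i r_i(X)\bigl(p_i(X)v\bigr)=0$ for every $m=\sum_i r_ip_i\in M$, i.e.\ $(X,v)\in V(M)$, and since by Proposition~\ref{prop:cIcCIsReal} the set $\sqrt{M}=\cI(V(M))$ is a real left module containing $M$ we have $\rr{M}\subseteq\sqrt{M}$, so each $m_t(X)v=0$ and hence $q(X)v=\sum_t(A_t\otimes\Id{n})m_t(X)v=0$.

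For the converse --- the substance --- I would first \textbf{reduce to rows}. Put $q_j:=e_jq\in\RR^{1\times\ell}\axs$ ($1\le j\le\nu$) and $p_{i,s}:=e_sp_i\in\RR^{1\times\ell}\axs$ ($1\le s\le\nu_i$); by the block structure of the Kronecker product, $p_i(X)v=0$ iff $p_{i,s}(X)v=0$ for all $s$, and $q(X)v=0$ iff $q_j(X)v=0$ for all $j$ (the same row decomposition used in the proof of Proposition~\ref{prop:diffMiReal}). Moreover $M=\sum_i\setmult{\RR^{1\times\nu_i}\axs}{p_i}$ is exactly the left module generated by the \emph{finite} set $\{p_{i,s}\}$, and the common zero set of $p_1,\dots,p_k$ equals $V(M)$. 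Hence the hypothesis on $q$ says precisely that each $q_j\in\cI(V(M))=\sqrt{M}$.

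The heart of the matter is then the \textbf{scalar-module ($1\times\ell$) real Nullstellensatz}: for a finitely generated left module $M\subseteq\RR^{1\times\ell}\axs$ one has $\sqrt{M}=\rr{M}$. The inclusion $\rr{M}\subseteq\sqrt{M}$ is Proposition~\ref{prop:cIcCIsReal}, so what must be proved is $\sqrt{M}\subseteq\rr{M}$: every $1\times\ell$ NC polynomial vanishing at every $(X,v)\in V(M)$ lies in the smallest real left module containing the generators. Granting this, each $q_j\in\rr{M}$, and therefore $q=\sum_{j=1}^\nu e_j^*q_j=\Id{\nu}q\in\setmult{\RR^{\nu\times1}}{\rr{M}}=J_\nu$, which closes the equivalence.

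So the genuine obstacle is $\sqrt{M}\subseteq\rr{M}$ for $1\times\ell$ modules, and I see two natural routes. The first reduces it to the \emph{left-ideal} case $\ell=1$ of \cite{chmn}: introduce $\ell$ fresh $\ast$-variables $z=(z_1,\dots,z_\ell)$ and an embedding of $\RR^{1\times\ell}\axs$ into $\RR\azs$ sending $e_j\otimes w$ to a left $\RR\axs$-multiple encoding the letter $z_j$, arranged so that left submodules correspond to left ideals and module representations $(X,v)$ correspond to suitable representations of $\RR\azs$; the delicate point is to choose the embedding (the maps $\phi_z$, $\varphi_z$ and inclusions $\iota$ of the notation) so that \emph{real} submodules match \emph{real} left ideals and no spurious common zeros are introduced. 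The second is a direct truncation argument: if $q\notin\rr{M}$, show that for some degree $d$ the truncation $q_d$ escapes the finite-dimensional degree-$d$ part of $\rr{M}$, and extract from the separating functional a finite-dimensional representation $(X,v)$ with $p_{i,s}(X)v=0$ but $q(X)v\ne0$ --- which is exactly the content underlying the algorithm promised in the abstract. I expect the first route, leaning on \cite{chmn}, to be the shorter one, with the single real difficulty being the realness correspondence across the embedding.
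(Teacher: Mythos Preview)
Your reduction is exactly the paper's: decompose $q$ and the $p_i$ into rows, identify the common zero set of the $p_i$ with $V(M)$ for $M=\sum_i\RR^{1\times\nu_i}\axs\,p_i$, and reduce the whole theorem to the identity $\sqrt{M}=\rr{M}$ for finitely generated left modules $M\subseteq\RR^{1\times\ell}\axs$ (this is precisely Proposition~\ref{thm:lnss}). Your handling of the easy direction and of the ``consequently'' clause also matches the paper.

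The divergence is in how $\sqrt{M}\subseteq\rr{M}$ itself is established. You propose two routes and lean toward route~1, embedding $\RR^{1\times\ell}\axs$ into $\RR\azs$ with fresh $\ast$-variables so as to invoke the $\ell=1$ case from \cite{chmn}. The paper does \emph{not} do this; it executes your route~2 directly for $\RR^{1\times\ell}\axs$, and this is the content of Sections~\ref{sect:allAboutRC}--\ref{sect:linFun}: right chip spaces and $\rC$-bases control the truncation, the semidefinite dichotomy of Lemma~\ref{lem:pSd} yields a positive-definite Hankel matrix orthogonal to the relations (Lemma~\ref{lem:goodSepFun}), a flat extension (Proposition~\ref{prop:flatExtRC}) plus the GNS construction (Corollary~\ref{cor:GNS}) then manufacture $(X,v)\in V(\rr{M})$ with $q(X)v\neq0$ whenever $q\notin\rr{M}$ (Lemma~\ref{lem:main}). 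So your route~2 sketch is correct in outline but is the technical heart of the paper, not a short step. As for route~1, the paper does not attempt it and your own caveat is on point: one must show the embedding carries real submodules to real left ideals \emph{and} that every $(X,v)\in V(M)$ lifts to a zero of the generated ideal in $\RR\azs$ without introducing spurious common zeros absorbing $q$; neither is automatic for the obvious map $e_j\otimes w\mapsto wz_j$, and it is not clear this would end up shorter than the direct argument.
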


This is proven in $\S$ \ref{sec:mainResults}.  An interesting corollary is
Corollary \ref{cor:extensionCH}, which gives the analog of this Theorem for
$\CC$ and $\HH$.  This Corollary follows from the observation (plus some
technical details) that $\CC$ and $\HH$ can be viewed as subspaces of $\RR^{2
\times 2}$ and $\RR^{4 \times 4}$ respectively.

In $\S$ \ref{subsub:rrAlg} we will present an algorithm for computing
$\rr{I}$ for a finitely-generated left module $I \subset \RR^{1 \times
\ell}\axs$.
This algorithm is a generalization of and an improvement upon the Real Algorithm
given in \cite{chmn}.
The following theorem, proven in $\S$
\ref{subsub:propsRrAlg}, states some of its appealing properties.

\begin{thm}
\label{thm:algorStops}
Let $I$ be the left module generated by
$\iota_1, \ldots, \iota_{\mu} \in \RR^{1 \times \ell}\axs$.  The following are
true for applying the algorithm described in
\S \ref{subsub:rrAlg} to $\iota_1, \ldots, \iota_{\mu}$.

\begin{enumerate}
\item
 \label{degreed}
If
$\deg(\iota_1), \ldots, \deg(\iota_{\mu}) \leq d$,
the polynomials involved in the algorithm all have degree
less than $2d$.

\item \label{stops}The algorithm is guaranteed to terminate in
a finite number of steps.

\item When the algorithm terminates, it outputs a reduced left
Gr\"obner basis for $\rr{I}$.
\end{enumerate}
\end{thm}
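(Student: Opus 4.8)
The plan is to prove the three assertions in order, since assertion~(1) --- a degree bound --- is what powers the finiteness in~(2) and the certification of realness in~(3). I take the algorithm to have the same shape as the Real Algorithm of \cite{chmn}: it alternates a left Gr\"obner-basis computation on the current generators with a ``real-radical extraction'' step that adjoins polynomials forced into the module by the defining condition of realness, the whole computation being confined to bounded degree. For~(1), the key fact is a rigidity property of the involution on the free $\ast$-algebra: for any finite family $p_1,\dots,p_m\in\RR^{1\times\ell}\axs$ that is not identically zero,
\[
\deg\Big(\sum_i^{\finite} p_i^*p_i\Big)=2\max_i\deg(p_i).
\]
Indeed, with $\delta=\max_i\deg(p_i)$ and $\hat p_i$ the degree-$\delta$ homogeneous part of $p_i$, the degree-$2\delta$ homogeneous component of $\sum_i p_i^*p_i$ is $\sum_i \hat p_i^*\hat p_i$, and for a length-$\delta$ word $w$ the coefficient of $w^*w$ in that component is $\sum_i c_{i,w}^* c_{i,w}$ with $c_{i,w}\in\RR^{1\times\ell}$ the coefficient of $w$ in $p_i$, a sum of positive semidefinite matrices that is nonzero unless every $c_{i,w}=0$. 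Granting this, every polynomial produced by the algorithm comes either from the Gr\"obner-basis subroutine run on elements of degree $<2d$ --- which only reduces and forms suffix-overlap combinations, and so never raises degree --- or from the extraction step, which searches, via bounded-degree linear algebra, for $p_i$ with $\sum_i p_i^*p_i$ landing in a degree-$(<2d)$ part of $\RR^{\ell\times1}I+I^*\RR^{1\times\ell}$; the identity then forces each such $p_i$ to have degree $<d$. Hence all polynomials involved have degree $<2d$.

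For~(2), work inside the finite-dimensional space of polynomials of degree $<2d$, which by~(1) contains everything the algorithm sees. A pass that does not terminate the algorithm strictly enlarges the left module generated by the current data, hence strictly enlarges its set of leading monomials of degree $<2d$; as that set lies in the finite collection of all monomials of degree $<2d$, only finitely many such passes can occur, and the Gr\"obner-basis subroutine itself terminates since each of its steps strictly lowers a leading monomial in the fixed well-order while staying within degree $<2d$. Together these give finite termination.

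For~(3), let $J_{\mathrm{out}}$ be the left module generated by the output. Then $I\subseteq J_{\mathrm{out}}$ (the Gr\"obner step preserves the generated module and the extraction step only adds elements), and $J_{\mathrm{out}}\subseteq\rr I$ by the invariant $J\subseteq\rr I$: a relation $\sum_i p_i^*p_i\in\RR^{\ell\times1}J+J^*\RR^{1\times\ell}\subseteq\RR^{\ell\times1}\,\rr I+(\rr I)^*\RR^{1\times\ell}$ forces each $p_i\in\rr I$ because $\rr I$ is real. It remains to see that $J_{\mathrm{out}}$ is real. By construction the algorithm halts only when the extraction step, run within degree $<2d$, yields nothing new, i.e.\ every relation $\sum_i p_i^*p_i\in\RR^{\ell\times1}J_{\mathrm{out}}+J_{\mathrm{out}}^*\RR^{1\times\ell}$ with all $\deg p_i<d$ already has each $p_i\in J_{\mathrm{out}}$. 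To promote this to arbitrary degree, take such a relation of maximal degree $\delta\ge d$ and replace each $p_i$ by its normal form modulo the reduced Gr\"obner basis of $J_{\mathrm{out}}$; this changes $\sum_i p_i^*p_i$ only by an element of $\RR^{\ell\times1}J_{\mathrm{out}}+J_{\mathrm{out}}^*\RR^{1\times\ell}$, by the same manipulations as in the proof of Proposition~\ref{prop:diffMiReal}, so we may assume the $p_i$ are normal forms; then, using the identity of~(1) and the top-degree structure of $\RR^{\ell\times1}J_{\mathrm{out}}+J_{\mathrm{out}}^*\RR^{1\times\ell}$, peel off the degree-$2\delta$ part to obtain a strictly lower-degree relation still witnessing the same element outside $J_{\mathrm{out}}$, and iterate until the degree drops below $2d$ --- contradicting the halting condition. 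Proposition~\ref{prop:diffMiReal} also handles the passage between $1\times\ell$ and $\nu\times\ell$ matrices wherever the extraction step needs it. Finally the output is a \emph{reduced} left Gr\"obner basis for $\rr I=J_{\mathrm{out}}$ because the last step before halting is a call to the reduced-Gr\"obner-basis subroutine.

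The main obstacle is the penultimate point: certifying realness from bounded-degree information. Passing to normal forms does not by itself bound degrees, since normal forms in a free algebra can be arbitrarily long, so the argument really needs the homogeneous ``peeling'' resting on the rigidity of $p\mapsto p^*p$ in~(1) together with a precise description of the leading part of $\RR^{\ell\times1}J_{\mathrm{out}}+J_{\mathrm{out}}^*\RR^{1\times\ell}$ --- in effect the matrix generalization of the key lemma of \cite{chmn} --- and it is this top-degree bookkeeping that carries the weight of the proof.
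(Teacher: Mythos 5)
Parts (1) and (2) of your argument are essentially sound and track the paper's: the degree-lexicographic Gr\"obner step never raises degree, the chip space $\rC^{(i)}$ lives one degree lower, and termination follows from strict growth of the generated module inside the finite-dimensional space $\RR^{1\times\ell}\axs_d$. The genuine gap is in part (3), exactly where you flag the difficulty. Your homogeneous ``peeling'' argument does not work as stated, because the module $\RR^{\ell\times1}J_{\mathrm{out}}+J_{\mathrm{out}}^*\RR^{1\times\ell}$ is \emph{not} graded: $J_{\mathrm{out}}$ is not generated by homogeneous polynomials, so the degree-$2\delta$ homogeneous component of an element of that module need not lie in the module, and after subtracting the top homogeneous slice of $\sum_i p_i^*p_i$ you are left with the uncontrolled cross terms $\hat p_i^*(p_i-\hat p_i)+(p_i-\hat p_i)^*\hat p_i$, which is no longer a sum of squares relation of the required form. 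The paper closes precisely this gap by abandoning degree orders in favor of $\rC$-orders and double $\rC$-orders adapted to a right chip space $\rC$. With those, Lemma~\ref{lem:sosInIC} shows that any relation $\sum_i p_i^*p_i\in\RR^{\ell\times1}I+I^*\RR^{1\times\ell}$, for $I$ generated inside $\RR\axs_1\rC$, forces each $p_i\in I+\rC$; the proof is an inductive leading-term reduction in the $\rC\times\rC$-order --- the rigorous incarnation of your peeling, but one monomial at a time rather than one homogeneous degree at a time --- which crucially uses Lemma~\ref{lem:leadOfSquare} (your ``rigidity'' of $p\mapsto p^*p$) and the factorization structure of Lemma~\ref{lem:repNEW2}. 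Combined with Lemma~\ref{lem:niceCBasis2}, this yields Theorem~\ref{thm:reducedRealTest}, which is the ``matrix generalization of the key lemma of \cite{chmn}'' you allude to and which certifies realness of $J_{\mathrm{out}}$ from the bounded search the algorithm actually performs (over $p_i, q_j \in \rC$). Without it, or a substitute for it, the step from ``no low-degree certificate'' to ``real'' is not justified, so as written your proof of (3) is incomplete.
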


\subsection{Reader's Guide}
\label{sub:guide}

Sections \ref{sect:allAboutRC}, \ref{sec:COrders}, and \ref{sec:DoubleCOrders}
are technical sections which prove lemmas needed for the proof of the main
results.
Section \ref{sect:linFun} proves some important lemmas and closes with the proof
of Theorem \ref{thm:mainFromNotes}.
Section \ref{sec:CandH} proves an
extension of Theorem \ref{thm:mainFromNotes} to $\CC$ and $\HH$.
Section \ref{sec:realRadStuff}
proves a strong result, Theorem \ref{thm:reducedRealTest}, for verifying whether
a left module is real, which will be used for the Real Radical Algorithm. 
Section \ref{sect:LGB} is a technical section
discussing left Gr\"obner bases. Section \ref{sub:rralg} presents the Real 
Radical Algorithm
mentioned in Theorem \ref{thm:algorStops} and proves its nice properties.

\section{Right Chip Spaces and Factorization of Monomials}
\label{sect:allAboutRC}

We now introduce  a natural class of monomials
needed for the proofs, chip sets.
Further, the Real Radical Algorithm described in Theorem \ref{thm:algorStops}
makes extensive use of chip sets, which makes said algorithm very efficient.

Given monomials $m_1, m_2 \in \cM^{\nu \times \ell}$, we say that $m_2$ {\bf
right divides} $m_1$, or that $m_2$ is a {\bf right chip} of $m_1$, if $m_1 =
wm_2$ for some $w \in \axs$.  If $w \neq 1$, we say the division is {\bf
proper} or that $m_2$ is a {\bf proper right chip}.

\begin{exa}
 If
\[
m_1 = e_2 \otimes x_1x_2x_3 \quad \mbox{and} \quad m_2 = e_2 \otimes x_2x_3,
\]
then $x_1 m_2 = m_1$, so $m_2$ is a proper right chip of $m_1$.
\end{exa}

A {\bf right chip space} $\rC \subset \RR^{\nu \times \ell}\axs$ is a space
spanned by monomials $m$ such that if $m \in \rC$, then so are all of its right
chips.  A right chip space is {\bf finite} if it is finite dimensional.

The space of NC polynomials has a rigid structure which makes finding sums of
squares representations easy.  For example, Klep and Povh \cite{kp} showed that
to verify that a NC polynomial $p$ is a sum of squares, one needs only to use
the right chips of the terms of $p$.
In this section we prove some basic results about right chip spaces which will
be useful in proving the main results of this article.

\subsection{Constant Matrices in the Complement of a Full Right Chip Space}

The element $1 \in \RR\axs$ right divides any monomial in $\RR^{1 \times 1}\axs
=
\RR\axs$, hence $1 \in \rC$ for any right chip space $\rC
\subset \RR\axs$.  For dimensions $\ell > 1$, however, not all right chip
spaces contain all constants.
Define
\textbf{$\Gamma(\rC)$}\index{$\Gamma(\rC)$} to be
\[\Gamma(\rC) := \{ j \mid e_j \otimes 1 \in \rC\} \subset \{1, \ldots,
\ell\}.\]

\begin{lemma}
\label{lem:gammaC}
Let $\rC \subset \RR^{1 \times \ell}\axs$ be a right chip space.
Then $\RR\axs \rC$ is equal to
\[\RR\axs \rC = \bigoplus_{j \in \Gamma(\rC)} e_j \otimes \RR\axs.\]
\end{lemma}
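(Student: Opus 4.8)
The plan is to prove the two inclusions separately. For the inclusion $\RR\axs\,\rC \subseteq \bigoplus_{j\in\Gamma(\rC)} e_j\otimes\RR\axs$, it suffices to check it on a spanning set, so take a monomial $m = e_j\otimes w \in \rC$ with $w\in\axs$, and an arbitrary $q\in\RR\axs$; then $q\cdot m = e_j\otimes qw$. Since $\rC$ is a right chip space and $m\in\rC$, every right chip of $m$ lies in $\rC$; in particular the right chip $e_j\otimes 1$ lies in $\rC$ (obtained by dividing off all of $w$ on the left), so $j\in\Gamma(\rC)$. Hence $q\cdot m = e_j\otimes qw \in e_j\otimes\RR\axs$ with $j\in\Gamma(\rC)$, which gives the inclusion. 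Note this step is where the chip-space hypothesis does all the work: without it, $\rC$ could contain $e_j\otimes w$ for some long word $w$ without containing $e_j\otimes 1$, and then $q\cdot m$ would not obviously sit in the claimed direct sum.

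For the reverse inclusion $\bigoplus_{j\in\Gamma(\rC)} e_j\otimes\RR\axs \subseteq \RR\axs\,\rC$, fix $j\in\Gamma(\rC)$. By definition $e_j\otimes 1\in\rC$, so for any $q\in\RR\axs$ we have $e_j\otimes q = q\cdot(e_j\otimes 1)\in\RR\axs\,\rC$. Thus $e_j\otimes\RR\axs\subseteq\RR\axs\,\rC$ for each $j\in\Gamma(\rC)$, and summing over $j\in\Gamma(\rC)$ gives the inclusion (the sum being contained in the subspace $\RR\axs\,\rC$).

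Finally I would justify that the sum on the right-hand side is genuinely direct, i.e.\ that the notation $\bigoplus$ is warranted. This is immediate from the structure of $\RR^{1\times\ell}\axs = \bigoplus_{j=1}^{\ell} e_j\otimes\RR\axs$: the subspaces $e_j\otimes\RR\axs$ for distinct $j$ intersect trivially, since an element of $e_j\otimes\RR\axs$ has all entries zero except the $j$-th, so a nonzero common element of $e_j\otimes\RR\axs$ and $e_k\otimes\RR\axs$ with $j\neq k$ is impossible. Combining the two inclusions yields the asserted equality. I do not anticipate a serious obstacle here; the only subtlety is making sure the chip-space closure property is invoked correctly to conclude $e_j\otimes 1\in\rC$ from $e_j\otimes w\in\rC$, which follows because $1$ is a right chip of $w$ (indeed $w = w\cdot 1$) and hence $e_j\otimes 1$ is a right chip of $e_j\otimes w$.
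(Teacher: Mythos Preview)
Your proposal is correct and follows essentially the same approach as the paper's proof: both use the right-chip-space property to conclude that $e_j\otimes 1\in\rC$ whenever some $e_j\otimes w\in\rC$, which gives $j\in\Gamma(\rC)$ and hence both inclusions. The paper's proof is a terse two-line version of your argument; your write-up simply makes the two inclusions and the directness explicit.
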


\begin{proof}
If $e_j \otimes w \in \rC$ for some $w \in \axs$, then since $\rC$ is a full
right chip space,
$e_j \otimes 1 \in \rC$.  The result is clear from here.
\end{proof}

Of interest as well are spaces of the form $\rC^* \RR\axs \rC \subset
\RR^{\ell \times
\ell}\axs$.

\begin{lemma}
 \label{lemma:MsFFaxsM}
 Let $\rC \subset \RR^{1 \times \ell}\axs$ be a
right chip space.
Then $\rC^* \RR\axs \rC$ is equal to
\begin{equation*}
%\label{eq:MsFFaxsM}
\rC^* \RR\axs \rC = \bigoplus_{i,j \in \Gamma(\rC)} E_{ij}
\otimes
\RR\axs.
\end{equation*}
\end{lemma}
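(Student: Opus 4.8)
The plan is to prove the two inclusions separately, using Lemma \ref{lem:gammaC} for the reverse one.

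\textbf{The inclusion $\bigoplus_{i,j \in \Gamma(\rC)} E_{ij} \otimes \RR\axs \subseteq \rC^* \RR\axs \rC$.} Fix $i,j \in \Gamma(\rC)$. By definition of $\Gamma(\rC)$ we have $e_i \otimes 1 \in \rC$ and $e_j \otimes 1 \in \rC$, so $(e_i \otimes 1)^* = e_i^* \otimes 1 = E_{i1}\otimes 1 \in \rC^*$. For any word $u \in \axs$,
\[
(e_i \otimes 1)^*\, u\, (e_j \otimes 1) = (e_i^* e_j) \otimes (1\cdot u\cdot 1) = E_{ij}\otimes u \in \rC^* \RR\axs \rC .
\]
Hence $E_{ij}\otimes u \in \rC^*\RR\axs\rC$ for every $u\in\axs$, and taking spans gives $E_{ij}\otimes\RR\axs \subseteq \rC^*\RR\axs\rC$. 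Summing over $i,j\in\Gamma(\rC)$ yields this inclusion.

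\textbf{The reverse inclusion.} I would first observe that the triple product $\rC^*\RR\axs\rC$ is unambiguous, since set multiplication is associative (because $(ab)c=a(bc)$ in $\RR\axs$), so $\rC^*\RR\axs\rC = \rC^*(\RR\axs\rC)$. Applying Lemma \ref{lem:gammaC} to $\RR\axs\rC$ gives
\[
\rC^*\RR\axs\rC = \rC^*\Bigl(\bigoplus_{j\in\Gamma(\rC)} e_j\otimes\RR\axs\Bigr) = \sum_{j\in\Gamma(\rC)} \rC^*\,(e_j\otimes\RR\axs).
\]
A typical generator of $\rC^*(e_j\otimes\RR\axs)$ has the form $(e_k\otimes w)^*(e_j\otimes v)$ with $e_k\otimes w \in \rC$ and $v\in\axs$, and this equals $(e_k^* e_j)\otimes(w^* v) = E_{kj}\otimes(w^* v) \in E_{kj}\otimes\RR\axs$. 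Since $e_k\otimes w\in\rC$ and $\rC$ is a right chip space, the right chip $e_k\otimes 1$ lies in $\rC$, i.e. $k\in\Gamma(\rC)$. Therefore every such generator lies in $\bigoplus_{k\in\Gamma(\rC)} E_{kj}\otimes\RR\axs$, and summing over $j\in\Gamma(\rC)$ gives $\rC^*\RR\axs\rC \subseteq \bigoplus_{k,j\in\Gamma(\rC)} E_{kj}\otimes\RR\axs$. The sum on the right is automatically a direct sum, since the matrix units $E_{ij}$ are linearly independent in $\RR^{\ell\times\ell}$.

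\textbf{Main obstacle.} There is no substantial difficulty here; the result is a direct computation once Lemma \ref{lem:gammaC} is available. The only points demanding care are the Kronecker-product bookkeeping, in particular the identity $(e_k\otimes w)^*(e_j\otimes v) = (e_k^* e_j)\otimes(w^* v) = E_{kj}\otimes(w^* v)$, and the observation that the right-chip closure of $\rC$ is precisely what forbids any index outside $\Gamma(\rC)$ from appearing, so the span cannot exceed the stated direct sum.
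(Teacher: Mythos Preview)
Your proof is correct and follows essentially the same approach as the paper, which simply states that the result is clear from Lemma~\ref{lem:gammaC}; you have merely spelled out the two inclusions that the paper leaves implicit. The only remark is that the aside ``$(e_i\otimes 1)^* = E_{i1}\otimes 1$'' is slightly nonstandard notation (here $E_{i1}$ would have to denote the $\ell\times 1$ column with a $1$ in row $i$), but it plays no role in the argument.
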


\begin{proof}
 This is clear from Lemma \ref{lem:gammaC}.
\end{proof}

\begin{lemma}
\label{lem:noTheta}
 Let $\rC \subset \RR^{1 \times \ell}\axs$ be a right chip space,
and let $I \subset \RR^{1 \times \ell}\axs$ be a left module generated by
polynomials in $\RR\axs \rC$.  Then
\[(\RR^{\ell \times 1} I + I^* \RR^{1 \times \ell}) \cap \rC^*\RR\axs \rC
= \rC^* I + I^* \rC.\]
\end{lemma}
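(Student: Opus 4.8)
The plan is to fix the abbreviations $G := \Gamma(\rC)$, $A := \RR^{\ell \times 1}I$, $B := I^*\RR^{1 \times \ell}$ and $C := \rC^*\RR\axs\rC$, so that the assertion to be proved reads $(A+B) \cap C = \rC^* I + I^* \rC$. First I would record the structural facts we need. Because $I$ is generated by elements of $\RR\axs\rC$ and $\RR\axs\rC$ is itself a left module, $I \subseteq \RR\axs\rC$, and Lemma~\ref{lem:gammaC} then gives $\RR\axs\rC = \bigoplus_{j \in G} e_j \otimes \RR\axs$; applying the involution, $I^* \subseteq \bigoplus_{j \in G} e_j^* \otimes \RR\axs$. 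Lemma~\ref{lemma:MsFFaxsM} gives $C = \bigoplus_{i,j \in G} E_{ij}\otimes\RR\axs$, so ``$P \in C$'' says exactly that the rows and the columns of $P$ indexed outside $G$ vanish. As noted in the text, $A$ is precisely the space of $\ell \times \ell$ matrix polynomials whose rows lie in $I$, and $B = A^*$ is the space of those whose columns lie in $I^*$; and $e_j \otimes 1 \in \rC$ if and only if $j \in G$.

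For the inclusion $\supseteq$, I would check that $\rC^* I \subseteq A$ (expand $c^* p = \sum_w A_w^*(w^* p)$ for $c = \sum_w A_w \otimes w \in \rC$ and $p \in I$, using that $w^* p \in I$ since $I$ is a left module) and that $\rC^* I \subseteq C$ (because $I \subseteq \RR\axs\rC$ gives $\rC^* I \subseteq \rC^* \RR\axs\rC = C$), and symmetrically $I^* \rC \subseteq B \cap C$. Hence $\rC^* I + I^* \rC \subseteq (A\cap C) + (B\cap C) \subseteq (A+B)\cap C$.

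The content is in the reverse inclusion. Let $P \in (A+B)\cap C$ and write $P = a + b$ with $a \in A$ and $b \in B$. The key point is that the columns of $b$ lie in $I^* \subseteq \bigoplus_{j \in G} e_j^* \otimes \RR\axs$, so every row of $b$ indexed outside $G$ vanishes; and every row of $P$ indexed outside $G$ vanishes since $P \in C$. Therefore every row of $a = P - b$ indexed outside $G$ vanishes too. Since $a \in A$, its rows lie in $I \subseteq \bigoplus_{j \in G} e_j \otimes \RR\axs$, hence $a$ is supported on columns in $G$ as well, so $a \in C$. Writing $a = \sum_{j \in G} e_j^* a_j$, where $a_j \in \RR^{1 \times \ell}\axs$ is the $j$th row of $a$ (which lies in $I$) and $e_j^* = (e_j \otimes 1)^* \in \rC^*$, we obtain $a \in \rC^* I$. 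Then $b = P - a$ lies in $C$ (both $P$ and $a$ do) and in $B$, and the mirror-image argument — the columns of $b$ lie in $I^*$, and since $b \in C$ the columns indexed outside $G$ vanish — yields $b = \sum_{j \in G} b_j e_j$ with $b_j \in I^*$ the $j$th column of $b$, so $b \in I^* \rC$. Hence $P = a + b \in \rC^* I + I^* \rC$.

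The one step that is not pure bookkeeping is this last argument, which exploits the fact that in general $(A+B)\cap C$ strictly contains $(A\cap C)+(B\cap C)$ but here does not: the row-support of $B$ (rows in $G$) is compatible with the support of $C$, which forces the $A$-summand of any element of $C$ to already lie in $C$. This compatibility is exactly what the right-chip structure of $\rC$ — packaged via $\Gamma(\rC)$ in Lemmas~\ref{lem:gammaC} and \ref{lemma:MsFFaxsM} — delivers, so once those lemmas are available I anticipate no serious obstacle.
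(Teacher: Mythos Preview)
Your proof is correct and follows essentially the same approach as the paper's. The paper introduces the complementary space $\Theta = \bigoplus_{j \notin \Gamma(\rC)} e_j \otimes \RR\axs$ and uses the four-block decomposition $\RR^{\ell \times \ell}\axs = \rC^*\RR\axs\rC \oplus \Theta^*\rC \oplus \rC^*\Theta \oplus \Theta^*\Theta$ to project an arbitrary element of $\RR^{\ell \times 1}I + I^*\RR^{1 \times \ell}$ onto $\rC^*\RR\axs\rC$; your row/column support argument is exactly this projection rephrased, and your observation that one can work directly with an arbitrary decomposition $P = a + b$ (rather than expanding via generators $\iota_i$ as the paper does) is a mild streamlining of the same idea.
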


\begin{proof}
 Let $\Theta$ be the space defined by
\[\Theta = \bigoplus_{j \not\in \Gamma(\rC)} e_j \otimes \RR\axs.\]
By Lemma \ref{lem:gammaC}, $\RR^{1 \times \ell}\axs = \RR\axs \rC
\oplus \Theta$, and
\begin{align}
\notag
\rC^*\RR\axs \rC &= \bigoplus_{i,j \in \Gamma(\rC)} E_{ij} \otimes \RR\axs,&
\Theta^* \rC & = \bigoplus_{\substack{i \not\in \Gamma(\rC)\\ j \in
\Gamma(\rC)}} E_{ij} \otimes \RR\axs\\
\notag
 \rC^*\Theta &=
 \bigoplus_{\substack{i \in \Gamma(\rC)\\ j \not\in \Gamma(\rC)}} E_{ij} \otimes
\RR\axs,
 &\Theta^*\Theta&=\bigoplus_{i,j \not\in \Gamma(\rC)} E_{ij} \otimes \RR\axs.
\end{align}
Therefore,
\begin{equation}
\label{eq:oplusCTh}
 \RR^{\ell \times \ell}\axs = \rC^*\RR\axs \rC \oplus
\Theta^* \rC \oplus \rC^*\Theta \oplus
\Theta^*\Theta.
\end{equation}
Let $\iota_1, \ldots, \iota_i, \ldots \in \RR\axs \rC$ generate $I$.
Each $\iota \in \RR^{\ell \times 1} I + I^* \RR^{1 \times
\ell}$
is of the form
\[\iota = \sum_i^{\finite} (p_i^*\iota_i + \iota_i^*q_i),\]
for some $p_i, q_i \in \RR^{1 \times \ell}\axs $.
Decompose each $p_i$ as $\phi_{p_i} + \theta_{p_i}$ and each
$q_i$ as $\phi_{q_i} + \theta_{q_i}$ so that
$\phi_{p_i}, \phi_{q_i} \in \RR\axs \rC$ and $\theta_{p_i},
\theta_{q_i} \in \Theta$.  Then
\begin{align}
\label{eq:partInCsC}
\iota &=
\sum_i^{\finite}
(\phi_{p_i}^*\iota_i + \iota_i^*\phi_{q_i})
+\sum_i^{\finite} (\theta_{p_i}^*\iota_i) +\sum_i^{\finite}
(\iota_i^*{\theta_{q_i}}),
\end{align}
so that the first sum of (\ref{eq:partInCsC})
is in $\rC^*\RR\axs \rC$, the second in $\Theta^* \rC$, and
the third in $\rC^*\Theta$.  If $\iota \in \rC^*\RR\axs \rC$, by
(\ref{eq:oplusCTh}), $\iota$ is equal to the first sum in
(\ref{eq:partInCsC}), which is an element of $\rC^*I + I^*\rC$.
\end{proof}

\subsection{Unique Factorization of Monomials}

If $w \in \axs$ and $0 \leq d \leq |w|$,
one can factor $w$ uniquely as
$w = w_1w_2$, where $w_1, w_2 \in \axs$ with $|w_1| = d$ and $|w_2| = |w| - d$.
The following lemma generalizes this fact to $\RR^{\nu \times \ell}\axs$.

\begin{lemma}
\label{lem:canFactorMon}
Let $m = E_{ij} \otimes w \in \cM^{\nu \times \ell}\axs$ and $w \in \axs$.
For each $0 \leq d \leq |m|$, there exists a factorization of
$m$ as $m = m_1^*m_2$, where $m_1 \in \cM^{1 \times \nu}$, $m_2 \in \cM^{1
\times \ell}$,
$\deg(m_1) = d$, and $\deg(m_2) = |m| - d$.  Further, this factorization is
uniquely determined, up to scalar multiplication,  by
$m_1 = e_i \otimes w_1^*$, $m_2 = e_j \otimes w_2$, where $w = w_1w_2$, $w_1,
w_2 \in \axs$, with
$|w_1| = d$ and $|w_2| = |m|-d$.
\end{lemma}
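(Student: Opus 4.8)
The plan is to prove existence by exhibiting the factorization explicitly and checking it by a direct computation, then to deduce uniqueness from the unique factorization of words in $\axs$ together with linear independence of the matrix units $E_{ij}$.

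For existence: given $m = E_{ij}\otimes w$ and $0 \le d \le |m| = |w|$, I first factor the word as $w = w_1 w_2$ with $|w_1| = d$ and $|w_2| = |w|-d$ (the unique such splitting of the word $w$), and set $m_1 := e_i \otimes w_1^* \in \cM^{1\times\nu}$ and $m_2 := e_j \otimes w_2 \in \cM^{1\times\ell}$. Since the involution reverses words but preserves their length, $\deg(m_1) = |w_1^*| = |w_1| = d$ and $\deg(m_2) = |w_2| = |m|-d$, as required. The identity to verify is then
\[
m_1^* m_2 = (e_i^* \otimes w_1)(e_j \otimes w_2) = (e_i^* e_j)\otimes(w_1 w_2) = E_{ij}\otimes w = m,
\]
using that the involution on $\RR^{1\times\nu}\axs$ sends $e_i \otimes w_1^*$ to $e_i^*\otimes w_1$, that $e_i^* e_j = E_{ij}$ (a product of a $\nu\times 1$ matrix by a $1\times\ell$ matrix), and that the Kronecker product is multiplicative under matrix multiplication.

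For uniqueness: suppose $m = n_1^* n_2$ with $n_1 \in \cM^{1\times\nu}$ and $n_2 \in \cM^{1\times\ell}$ of degrees $d$ and $|m|-d$ respectively. Writing $n_1 = c\,(e_a \otimes u)$ and $n_2 = c'\,(e_b \otimes v)$ with $|u| = d$, $|v| = |m|-d$ (the scalars $c, c'$ recording the ``up to scalar multiplication'' freedom), I compute $n_1^* n_2 = cc'\, E_{ab}\otimes(u^* v)$. Comparing with $E_{ij}\otimes w$ and using that the $E_{ab}$ are linearly independent in $\RR^{\nu\times\ell}$ forces $a = i$, $b = j$, $cc' = 1$, and $u^* v = w$. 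Since $|u^*| = |u| = d$, this last equality is a factorization of the word $w$ into pieces of lengths $d$ and $|m|-d$, so uniqueness of word factorization gives $u^* = w_1$ and $v = w_2$, i.e. $u = w_1^*$, $v = w_2$, and hence $n_1 = c\,(e_i\otimes w_1^*)$ and $n_2 = c^{-1}(e_j\otimes w_2)$, which is exactly the asserted factorization up to the reciprocal scalar pair.

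I do not expect any substantive obstacle here: the content of the lemma is just unique factorization of words dressed up with matrix-unit bookkeeping. The only points demanding care are getting the indices and the involution correct in the verification $m_1^* m_2 = m$ (in particular the identities $(e_i\otimes w_1^*)^* = e_i^*\otimes w_1$ and $e_i^* e_j = E_{ij}$), and making the harmless scalar ambiguity $m_1 \mapsto c\,m_1$, $m_2 \mapsto c^{-1} m_2$ explicit so that the ``unique up to scalar multiplication'' phrasing is literally justified.
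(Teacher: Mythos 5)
Your proof is correct, and both existence and the main idea behind uniqueness (reduce to unique factorization of a word in $\axs$ plus matrix-unit bookkeeping) match the paper's argument. There is one difference in the uniqueness step worth noting. You start by assuming $n_1, n_2$ are scalar multiples of monomials, $n_1 = c\,(e_a\otimes u)$ and $n_2 = c'\,(e_b\otimes v)$, which is justified by the statement's requirement that $m_1 \in \cM^{1\times\nu}$, $m_2 \in \cM^{1\times\ell}$. The paper instead writes $m_1 = \sum_{\rho,u} A_{\rho,u}\,e_\rho\otimes u$ and $m_2 = \sum_{\sigma,v} B_{\sigma,v}\,e_\sigma\otimes v$ as general elements of $\RR^{1\times\nu}\axs$ and $\RR^{1\times\ell}\axs$, multiplies out, and lets the comparison of coefficients against $E_{ij}\otimes w$ force almost all $A_{\rho,u}$ and $B_{\sigma,v}$ to vanish. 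That version proves something slightly stronger: it shows the factors are \emph{forced} to be scalar multiples of monomials rather than assuming it, which is really what makes the phrase ``unique up to scalar multiplication'' meaningful (a literal monomial carries no scalar freedom). The extra generality is cheap because $\RR\axs$ is a domain and the degree constraint pins down both factors, so the two arguments are close in cost; if you want to match the paper exactly, replace the assumption ``$n_1, n_2$ are scalar multiples of monomials'' with the general-polynomial expansion and let the coefficient comparison derive it.
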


\begin{proof}
It is clear that $m$ can be factored as $m = (e_i \otimes w_1^*)^*(e_j \otimes
w_2) = E_{ij} \otimes w$, where $w_1, w_2 \in \axs$ with $|w_1| = d$ and $|w_2|
= |m|-d$.
Conversely, suppose $m = m_1^*m_2$, where
\[m_1 = \sum_{\rho=1}^{\nu} \sum_{u \in \axs} A_{\rho,u} e_{\rho} \otimes u
\quad
\mbox{and}
\quad
m_2 =  \sum_{\sigma=1}^{\ell} \sum_{v \in \axs} B_{\sigma,v} e_{\sigma}
\otimes v,\]
for some $A_{\rho,u}, B_{\sigma,v} \in \RR$.
Then,
\begin{align}
\notag
m_1^*m_2 &= \sum_{\rho = 1}^{\nu} \sum_{\sigma=1}^{\ell} \sum_{u \in
\axs}\sum_{v \in \axs}
A_{\rho,u}B_{\sigma,v} E_{\rho\sigma} \otimes u^*v \\
\label{eq:rhoSigma}
 &= \sum_{\rho=1}^{\nu} \sum_{\sigma = 1}^{\ell} E_{\rho\sigma}
\otimes \left( \sum_{u \in \axs}
A_{\rho,u}  u\right)^*
\left(\sum_{v \in \axs} B_{\sigma,v} v\right)\\
\notag
&= E_{ij} \otimes w.
\end{align}
The terms of  (\ref{eq:rhoSigma}) with $\rho = i$ and $\sigma = j$ are equal to
$E_{ij}
\otimes w_1^*w_2$, which implies, by uniqueness of the factorization of $w$,
that
$A_{i,u} = B_{j,v} = 0$, for $u \neq w_1^*$
and $v \neq w_2$, and $A_{i,w_1^*}B_{j,w_2} = 1$.  The terms of
 (\ref{eq:rhoSigma}) with
 $\rho \neq i$ and $\sigma =
j$
are equal to $0$, which implies that each
$A_{\rho, u} = 0$ for $\rho \neq i$.  Similarly, each $B_{\sigma, v} = 0$ for
$\sigma \neq 0$.
Therefore $m_1 = A_{i,w_1^*}(e_i \otimes w_1^*)$ and $m_2 =
(1/A_{i,w_1^*}) (e_j \otimes w_2)$.
\end{proof}

Given a right chip space $\rC \subset \RR^{1 \times \ell}\axs$, there are some
special factorizations of monomials in $\cM^{1 \times \ell}$ and $\cM^{\ell
\times \ell}$ which will be useful.

\begin{lemma}
\label{lem:uniqueDecomp}
Let $\rC \subset \RR^{1 \times \ell}\axs$ be a right chip space.
Each monomial $m \in \RR\axs \rC$ has a unique
word $w
\in \axs$ of minimum length and a unique right chip $\bar{m} \in \rC$
such that $m = w \bar{m}$.
\end{lemma}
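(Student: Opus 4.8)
The statement asserts that any monomial $m$ in the left module $\RR\axs\,\rC$ (where $\rC\subset\RR^{1\times\ell}\axs$ is a right chip space) factors as $m = w\bar m$ with $w\in\axs$ and $\bar m\in\rC$, and that among all such factorizations there is exactly one where $|w|$ is minimal. So the plan is to (i) produce \emph{some} factorization of the desired form, (ii) pick one with $w$ of minimal length, and (iii) show this minimal-length witness, together with its companion $\bar m$, is unique.

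First I would observe that $m\in\RR\axs\,\rC$ means $m$ is a linear combination of monomials of the form $(\Id{}\otimes u)c$ with $u\in\axs$ and $c\in\rC$ a monomial; since $m$ is itself a \emph{single} monomial, writing $m = E_{1j}\otimes v$ for the appropriate column index $j$ and word $v\in\axs$, we get that $m$ is a scalar multiple of $u\,c$ for at least one such pair, i.e.\ $m = u(e_j\otimes v')$ with $v = uv'$ and $e_j\otimes v'\in\rC$ (using the ``right divides'' terminology from the start of this section and Lemma~\ref{lem:canFactorMon} to see that left-multiplying a monomial in $\rC$ by $u$ just prepends $u$). Hence the set of valid factorizations is nonempty, and it is parametrized by the set of right chips $e_j\otimes v'$ of $m$ that lie in $\rC$, equivalently by the set $D = \{\, d\in\{0,\dots,|m|\} : e_j\otimes w_2 \in \rC \text{ where } v = w_1 w_2,\ |w_1| = d \,\}$, using the unique length-$d$ factorization of the word $v$ from Lemma~\ref{lem:canFactorMon}. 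For each $d\in D$ this data determines $w$ (namely $w = w_1$) and $\bar m$ (namely $e_j\otimes w_2$) uniquely, again by uniqueness of word factorization. Now pick $d_0 = \min D$: this exists since $D$ is a nonempty finite set, and it yields the claimed factorization $m = w\bar m$ with $|w| = d_0$ minimal.

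For uniqueness of the minimal witness: if $m = w\bar m = w'\bar m'$ are two factorizations with $|w| = |w'| = d_0$, then comparing with $m = E_{1j}\otimes v$ via Lemma~\ref{lem:canFactorMon} (applied with this value of $d_0$), both $(w,\bar m)$ and $(w',\bar m')$ arise from the \emph{unique} length-$d_0$ factorization $v = w_1 w_2$, so $w = w' = w_1$ and $\bar m = \bar m' = e_j\otimes w_2$. (A small point: a monomial in $\RR^{\nu\times\ell}\axs$ is $E_{ij}\otimes m$ up to scalar, but here the scalars are pinned down because $m$ equals $w\bar m$ on the nose, not just up to scalar — the coefficient of $w\bar m$ and of $m$ are both whatever the coefficient of $m$ is, and the right-chip-space monomials we take are the standard basis monomials $e_j\otimes v'$ with coefficient $1$.) This handles both existence and uniqueness.

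The only real subtlety — and the step I'd watch most carefully — is the very first one: correctly justifying that a single monomial $m$ lying in the \emph{span} $\RR\axs\,\rC$ must actually be (a scalar multiple of) a product $u\,c$ of a word with a basis monomial of $\rC$, rather than a genuine linear combination with no single term equal to $m$. This follows because the products $\{\, u\,(e_j\otimes v') : u\in\axs,\ e_j\otimes v'\in\rC \text{ a monomial}\,\}$ are themselves monomials (each equal to $e_j\otimes uv'$ up to the scalar $1$), and distinct pairs can collide only in the expected way; expanding $m$ in the monomial basis of $\RR^{1\times\ell}\axs$ and matching the $E_{1j}\otimes v$ coefficient forces at least one contributing pair to satisfy $uv' = v$. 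Everything after that is bookkeeping on words, powered entirely by the unique factorization of words and Lemma~\ref{lem:canFactorMon}; the definition of ``right chip space'' (closure under taking right chips) is what guarantees $D$ is nonempty the moment \emph{any} valid factorization exists, though in fact we get nonemptiness directly from $m\in\RR\axs\,\rC$.
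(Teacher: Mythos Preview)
Your proof is correct and follows essentially the same approach as the paper's: existence of some factorization comes from $m\in\RR\axs\,\rC$, and uniqueness of the minimal-length one is Lemma~\ref{lem:canFactorMon}. The paper's proof is two sentences; you have simply unpacked the existence step (that a monomial in the span $\RR\axs\,\rC$ is literally a product $w\bar m$) more carefully than the paper bothers to, and your closing remark is right that the right-chip-space closure property is not actually used in this lemma.
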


\begin{proof}
Each monomial in $\RR\axs \rC$ is of the form $w
\bar{m}$, with $w \in \axs$ and $\bar{m} \in
\rC$.  Uniqueness of the minimal $w \in \axs$
follows from Lemma \ref{lem:canFactorMon}.
\end{proof}
\begin{comment}

\begin{lemma}
 \label{lem:uniqueDecompCsC}
Let $\rC \subset \RR^{1 \times \ell}\axs$ be a right chip space.
Each monomial $m \in \rC^*\RR\axs \rC$
has a
unique word $w \in
\axs$ of minimum length such that $m = m_1^* w m_2$, where
$m_1, m_2 \in \rC$.  Further, if the minimal $w \neq 1$, then $m_1, m_2 \in
\rC$
are also unique.
\end{lemma}

\begin{proof}
First, it is evident that every monomial in $m \in \rC^*\RR\axs \rC$
can be expressed as $m = m_1^* w m_2$, where
$m_1, m_2 \in \rC$ and $w \in \axs$.
Suppose $m = m_1^*wm_2 = \tilde{m}_1^*\tilde{w}\tilde{m}_2$
for some $\tilde{m}_1, \tilde{m}_2 \in \rC$ and $\tilde{w} \in \axs$ with $|w| =
|\tilde{w}|$.
If $|w| = |\tilde{w}| = 0$, then $w = \tilde{w} = 1$.  Otherwise,
assume $|m_2| \leq |\tilde{m}_2|$.  By
Lemma \ref{lem:canFactorMon} we have
\[m = \tilde{m}_1^*\tilde{w}\tilde{m}_2 = \tilde{m}_1^* \tilde{w}(u m_2) =
m_1^*wm_2,\]
for some $u \in \axs$ so that $\tilde{m}_2 = um_2$.

Assume $|m_1| < |\tilde{m}_1|$.  Then $|u| > 0$.  Let $u = u_1u_2$, where $|u_2|
= 1$.  Since $m_2, u_1u_2m_2 \in \rC$, since $\rC$ is a right chip space we have
$u_2 m_2 \in\rC$.  Further, $w = w_1 u_2$ for some $w_1 \in \axs$ since $|w| >
0$.  Therefore
\[m = m_1^*wm_2 = m_1^* w_1 (u_2m_2),\]
and $|w_1| < |w|$, which contradicts the minimality of $|w|$.

Therefore $|m_2| = |\tilde{m}_2|$.  Similarly, $|m_1| = |\tilde{m}_1|$.  By
uniqueness of factorization of monomials, this implies that $m_1 = \tilde{m}_1$,
$w = \tilde{w}$, and $m_2 = \tilde{m}_2$.
\end{proof}

\end{comment}

\begin{lemma}
\label{lem:repRRaxsrC}
 Let $\rC \subset \RR^{1 \times \ell}\axs$ be a right chip space.
A monomial $m \in \cM^{1 \times \ell}$ is in the set $\RR\axs \rC \setminus \rC$
if and only if it can be expressed as $m = w\bar{m}$, where $\bar{m} \in
\RR\axs_1 \rC \setminus \rC$ and $w \in \axs$.  Further, this representation is
unique.
\end{lemma}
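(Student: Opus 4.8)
The plan is to reduce the statement to two ingredients: Lemma~\ref{lem:uniqueDecomp} (each monomial $m\in\RR\axs\rC$ has a \emph{unique} word $w\in\axs$ of minimum length and a unique right chip $\bar m\in\rC$ with $m=w\bar m$), and a one-line description of $\RR\axs_1\rC\setminus\rC$ at the level of monomials. For the latter: since $\RR\axs_1=\operatorname{Span}\{1,x_1,\dots,x_g,x_1^*,\dots,x_g^*\}$ and $\rC$ is spanned by monomials, $\RR\axs_1\rC$ is spanned by the monomials $\bar c$ and $y\bar c$ with $\bar c\in\rC$ a monomial and $y$ a single letter; as distinct monomials are linearly independent, a monomial $\bar m$ lies in $\RR\axs_1\rC\setminus\rC$ if and only if $\bar m\notin\rC$ and $\bar m=y\bar c$ for some letter $y$ and some monomial $\bar c\in\rC$.

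For the ``only if'' (existence), given $m\in\RR\axs\rC\setminus\rC$ I would take the minimal decomposition $m=w_0\bar m_0$ from Lemma~\ref{lem:uniqueDecomp}; since $m\notin\rC$, the word $w_0$ is nonempty, so $w_0=wy$ with $y$ a single letter. Put $\bar m:=y\bar m_0$, so $m=w\bar m$ and $\bar m\in\RR\axs_1\rC$. If $\bar m\in\rC$, then $m=w\bar m$ is a word-times-chip factorization with $|w|<|w_0|$, contradicting minimality of $w_0$; hence $\bar m\in\RR\axs_1\rC\setminus\rC$, the required form. For the converse, if $m=w\bar m$ with $\bar m\in\RR\axs_1\rC\setminus\rC$ and $w\in\axs$, then $\bar m=y\bar c$ with $y$ a letter and $\bar c\in\rC$ by the structural remark, so $m=wy\bar c\in\RR\axs\rC$; and $m\notin\rC$ because $\bar m$ is a right chip of $m$ not lying in $\rC$, while right-chip membership in $\rC$ is downward closed.

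The real content is uniqueness. Suppose $m=w_1\bar m_1=w_2\bar m_2$ with $\bar m_i\in\RR\axs_1\rC\setminus\rC$; write $\bar m_i=y_i\bar c_i$ with $y_i$ a letter and $\bar c_i=e_j\otimes u_i\in\rC$, where the index $j$ is determined by $m$ and $u_i\in\axs$. Then each $w_iy_i$ is the word of a word-times-chip factorization $m=(w_iy_i)\bar c_i$, so $|w_iy_i|\ge|w_0|$, where $w_0$ is the minimal word of $m$. The key claim is that $w_iy_i=w_0$. Indeed, if $|w_iy_i|>|w_0|$, then comparing the two factorizations of the $\axs$-word underlying $m$ (a word has a unique prefix of each length, Lemma~\ref{lem:canFactorMon}) gives $w_iy_i=w_0v$ with $v$ nonempty; since the last letter of $w_0v=w_iy_i$ is $y_i$ we may write $v=v'y_i$, whence $\bar m_0=e_j\otimes v'y_iu_i=v'\bar m_i$, so $\bar m_i$ is a right chip of $\bar m_0\in\rC$, forcing $\bar m_i\in\rC$ --- a contradiction. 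Hence $w_1y_1=w_0=w_2y_2$; reading off the final letter gives $y_1=y_2$, then $w_1=w_2$, and cancelling $w_1y_1$ from the word underlying $m$ gives $u_1=u_2$, i.e.\ $\bar m_1=\bar m_2$.

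The only genuinely delicate step is this last claim --- that the prefix $w_iy_i$ cannot be strictly longer than the minimal word $w_0$ --- and the right-chip closure of $\rC$ is exactly what rules it out; everything else is routine bookkeeping with Lemma~\ref{lem:uniqueDecomp} and unique factorization of words in $\axs$.
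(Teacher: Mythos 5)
Your proof is correct and essentially follows the paper's route: existence by peeling the last letter off the minimal word from Lemma~\ref{lem:uniqueDecomp}, the converse by right-chip closure of $\rC$, and uniqueness by deriving a contradiction from chip closure when word lengths mismatch. The only cosmetic difference is in the uniqueness step, where you anchor both factorizations to the minimal word $w_0$ while the paper compares $w_1$ and $w_2$ to each other directly; in both variants the decisive contradiction is that one of the $\bar{m}_i$ would be a proper right chip of an element of $\rC$ and hence forced into $\rC$.
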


\begin{proof}
Decompose $m \in \RR\axs \rC \setminus \rC$ as in Lemma \ref{lem:uniqueDecomp}
as $m = \hat{w} \hat{m}$, where $\hat{m} \in \rC$ and $\hat{w}$ is as small as
possible.
We cannot have $\hat{w} = 1$ since $m \not\in \rC$, so decompose $\hat{w}$ as
$\hat{w}_1\hat{w}_2$, where $|\hat{w}_2| = 1$.  Then $m = \hat{w}_1(\hat{w}_2
\hat{m})$, and by minimality
of $\hat{w}$, $\bar{m} = \hat{w}_2 \hat{m} \in \RR\axs_1 \rC \setminus \rC$.

Conversely, it is a contradiction to have $w \bar{m} \in \rC$ for some $w \in
\axs$ and $\bar{m} \in
\RR\axs_1 \rC \setminus \rC$ since $\bar{m} \not\in \rC$ would be a right chip
of $w \bar{m} \in \rC$.

To show uniqueness, suppose $w_1 \bar{m}_1 = w_2 \bar{m}_2 \in \RR\axs
\rC
\setminus \rC$, where $w_1,w_2 \in \axs$ and $\bar{m}_1, \bar{m}_2 \in \RR\axs_1
\rC \setminus \rC$, and suppose $|w_1| \leq |w_2|$.
If $|w_1| < |w_2|$, then decompose $w_2$ as $w_2 = w_1 u$ for some $u
\in \axs$ with $|u| > 0$.  Then $\bar{m}_1 = u \bar{m}_2$.  Decompose
$\bar{m}_1$ as $u_1 u_2\bar{m}_2$, where $u = u_1u_2$, $u_1, u_2 \in \axs$, and
$|u_1| = 1$, so that $u_2 \bar{m}_2 \in \rC$.
Therefore $\bar{m}_2 \in \rC$, which is a contradiction.
Therefore $|w_1| = |w_2|$, and by Lemma \ref{lem:canFactorMon}, $w_1 =
w_2$ and $\bar{m}_1 =
\bar{m}_2$.
\end{proof}

\begin{lemma}
\label{lem:repNEW2}
 Let $\rC \subset \RR^{1 \times \ell}\axs$ be a right chip space.
A monomial $m \in \cM^{\ell \times \ell}$ is in the space $\rC^* \RR\axs \rC
\setminus \rC^*\RR\axs_1
\rC$ if and only if it can be expressed as
$m = \bar{m}_1^* w \bar{m}_2$, where $\bar{m}_1, \bar{m}_2 \in
\RR\axs_1 \rC \setminus \rC$ and $w \in \axs$. Further, this representation is
unique.
\end{lemma}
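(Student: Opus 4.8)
The plan is to derive the whole statement from Lemma~\ref{lem:repRRaxsrC}, applied once on the right and once on the left, so that Lemma~\ref{lem:repNEW2} becomes in effect its ``two-sided'' version. Two bookkeeping facts will be used repeatedly, each immediate from the definitions together with Lemma~\ref{lemma:MsFFaxsM}: a monomial lies in $\RR\axs_1\rC\setminus\rC$ exactly when it has the form $e_k\otimes yu$ with $y$ a single letter, $e_k\otimes u\in\rC$, and $e_k\otimes yu\notin\rC$; and a monomial lies in $\rC^*\RR\axs_1\rC$ exactly when it has the form $E_{ij}\otimes(u^*yv)$ with $e_i\otimes u,\ e_j\otimes v\in\rC$ and $y$ either empty or a single letter. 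Also, by Lemma~\ref{lemma:MsFFaxsM}, any monomial $m\in\rC^*\RR\axs\rC$ can be written $m = E_{ij}\otimes v$ with $i,j\in\Gamma(\rC)$, hence trivially $m = (e_i\otimes 1)^*\,v\,(e_j\otimes 1)$; this trivial factorization is what makes the forward direction easy.

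For the forward direction, take $m = E_{ij}\otimes v\in\rC^*\RR\axs\rC\setminus\rC^*\RR\axs_1\rC$ and set $n := e_j\otimes v = v\,(e_j\otimes 1)\in\RR\axs\rC$. Since $m = (e_i\otimes 1)^*n$ and $e_i\otimes 1\in\rC$, the hypothesis $m\notin\rC^*\RR\axs_1\rC$ forces $n\notin\RR\axs_1\rC$, hence $n\in\RR\axs\rC\setminus\rC$; Lemma~\ref{lem:repRRaxsrC} then gives $n = v_1\bar m_2$ with $\bar m_2\in\RR\axs_1\rC\setminus\rC$ and $v_1\in\axs$, and comparing words yields $v = v_1v_2$ and $\bar m_2 = e_j\otimes v_2$. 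Thus $m = (e_i\otimes 1)^*v_1\bar m_2 = (e_i\otimes v_1^*)^*\bar m_2$; if $e_i\otimes v_1^*$ were in $\rC$ then $m\in\rC^*\RR\axs_1\rC$ (as $\bar m_2\in\RR\axs_1\rC$), a contradiction, so $e_i\otimes v_1^* = v_1^*(e_i\otimes 1)\in\RR\axs\rC\setminus\rC$ and Lemma~\ref{lem:repRRaxsrC} again gives $e_i\otimes v_1^* = u\,\bar m_1$ with $\bar m_1\in\RR\axs_1\rC\setminus\rC$ and $u\in\axs$. Then $m = (u\bar m_1)^*\bar m_2 = \bar m_1^*\,w\,\bar m_2$ with $w := u^*\in\axs$, which is a representation of the required shape.

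For the converse and for uniqueness I compare the underlying words directly. If $m = \bar m_1^*\,w\,\bar m_2$ with $\bar m_1 = e_i\otimes y_1u_1$, $\bar m_2 = e_j\otimes y_2u_2$, $y_1,y_2$ single letters, $e_i\otimes u_1,\ e_j\otimes u_2\in\rC$, $e_i\otimes y_1u_1,\ e_j\otimes y_2u_2\notin\rC$, then $m = E_{ij}\otimes(u_1^*\,y_1^*\,w\,y_2\,u_2)$, which lies in $\rC^*\RR\axs\rC$ because $\bar m_1,\bar m_2\in\RR\axs\rC$ and $(\RR\axs\rC)^* = \rC^*\RR\axs$ by Lemma~\ref{lem:gammaC}. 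Were $m\in\rC^*\RR\axs_1\rC$, one could also write $m = E_{ij}\otimes(p^*zq)$ with $e_i\otimes p,\ e_j\otimes q\in\rC$ and $z$ empty or a single letter; but if $|p|\ge|u_1|+1$ then $u_1^*y_1^*$ is a prefix of $p^*$, so $e_i\otimes y_1u_1$ is a right chip of $e_i\otimes p\in\rC$ and therefore lies in $\rC$, contradicting $e_i\otimes y_1u_1\notin\rC$; hence $|p|\le|u_1|$, and symmetrically $|q|\le|u_2|$, so $|u_1|+|u_2|+|w|+2 = |p|+|z|+|q|\le|u_1|+|u_2|+1$, i.e.\ $|w|\le -1$, absurd. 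For uniqueness, suppose $m = \bar m_1^*w\bar m_2 = \tilde m_1^*\tilde w\tilde m_2$ with all four factors in $\RR\axs_1\rC\setminus\rC$; equating the two expressions for the underlying word and running the same prefix/suffix argument (a right chip of $e_i\otimes\tilde u_1$ cannot equal $e_i\otimes y_1u_1\notin\rC$) forces the lengths of the corresponding chips to agree, so the word splits in only one way, giving $\bar m_1 = \tilde m_1$, $w = \tilde w$, $\bar m_2 = \tilde m_2$ up to scalars, in the sense of Lemma~\ref{lem:canFactorMon}.

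I expect the only genuine work to be the word combinatorics in the converse and uniqueness arguments; the forward direction is painless once the trivial factorization $m = (e_i\otimes 1)^*v(e_j\otimes 1)$ is used. The one point to keep straight is the behaviour of the involution when a prefix of the middle word is absorbed into the \emph{left} chip --- the passage from $(e_i\otimes 1)^*v_1$ to $(e_i\otimes v_1^*)^*$ --- but this is purely formal, and every length comparison needed reduces to the single fact that a right chip of an element of a right chip space again lies in that space. There should be no conceptual difficulty beyond Lemma~\ref{lem:repRRaxsrC}.
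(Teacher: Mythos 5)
Your proof is correct and takes essentially the same route as the paper's: both establish the forward direction by applying Lemma~\ref{lem:repRRaxsrC} once on the right factor and once (after passing through the involution) on the left factor, and both handle the converse and uniqueness by length comparisons that ultimately reduce to the right-chip-space closure property together with Lemma~\ref{lem:canFactorMon}. The only cosmetic differences are that you start from the trivial factorization $m=(e_i\otimes 1)^*\,v\,(e_j\otimes 1)$ rather than a generic $a^*bc$ with $a,c\in\rC$, and you split the $m\notin\rC^*\RR\axs_1\rC$ check from the uniqueness check, whereas the paper handles both at once by comparing against a generic $t^*uv$ with $t,v\in\RR\axs_1\rC$; neither difference is substantive.
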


\begin{comment}
\begin{proof}
 If $m \in \rC^*\RR\axs \rC \setminus (\RR^{1 \times \ell} \cap \rC)^*\rC$,
then Lemma \ref{lem:repNEW}
implies that $m = a^* \bar{m}_2$, where $a \in \RR\axs \rC$ and $\bar{m}_2 \in
\RR\axs_1 \rC \setminus \rC$, and such a representation is unique.
Further, if $a \in \RR\axs \rC \setminus \rC$, then Lemma \ref{lem:repRRaxsrC}
implies that $a = w^*\bar{m}_1$ for some $w \in \axs$ and $\bar{m}_1 \in
\RR\axs_1 \rC \setminus \rC$, and that such a representation is unique.
Uniqueness follows from the uniqueness given by Lemmas \ref{lem:repRRaxsrC} and
\ref{lem:repNEW}.

Further, assume that $\bar{m}_1^*w\bar{m}_2 \in \rC^*\RR\axs_1\rC $
so that it is equal to $a^*bc$ for some $a,c \in \rC$ and $b \in \axs$ of
length $1$.
If $|c| \geq |\bar{m}_2|$, then $c = \bar{c}\bar{m}_2$ for some $\bar{c} \in
\axs$.  Hence, by Lemma \ref{lem:repRRaxsrC}, $c \not\in \rC$, which is a
contradiction.  Similarly, we see that $|a|< |\bar{m}_1|$.  This is a
contradiction since
\[|a^*bc| = |a| + |c| + 1 \leq |\bar{m}_1| - 1 + |\bar{m}_2| - 1 + 1+ |w| =
|a^*bc| -1. \]
\end{proof}
\end{comment}

\begin{proof}
First, $m \in \rC^*\RR\axs \rC$ if and only if it can be expressed as $a^*bc$,
where $a, c \in \rC$ and $b \in \axs$.
Next, either $bc \in \RR\axs_1 \rC$, in which case $a^*bc \in \rC^*\RR\axs_1
\rC$,
or by Lemma \ref{lem:repRRaxsrC}, $bc$ can be uniquely expressed as $bc =
b_1(b_2c)$, where $b = b_1b_2$,
$b_2c \in \RR\axs_1 \rC \setminus \rC$, and $|b_1| > 0$.  Next, either $b_1^*a
\in \rC$,
in which case $a^*bc \in \rC^*\RR\axs_1 \rC$, or by Lemma \ref{lem:repRRaxsrC},
$b_1^*a$ can be uniquely expressed as $b_4^*(b_3^*a)$, where $b_1 = b_3b_4$ and
$b_3^*a \in \RR\axs_1 \rC \setminus \rC$.
In the case that $a^*bc \in \rC^* \RR\axs \rC \setminus \rC^*\RR\axs_1
\rC$, we
see that $a^*bc = (b_3^*a)^*b_4(b_2c)$, with $b_3^*a, b_2c \in \RR\axs_1 \rC
\setminus \rC$ and $b_4 \in \axs$.

Next, suppose $\bar{m}_1^*w \bar{m}_2 = t^*uv$, where $\bar{m}_1, \bar{m}_2 \in
\RR\axs_1 \rC \setminus \rC$, $t,v \in \RR\axs_1 \rC$, and $w,u \in \axs$.
If $|t| > |\bar{m}_1|$, then Lemma \ref{lem:canFactorMon} implies that
$\bar{m}_1$ right divides $t$ properly, which implies by Lemma
\ref{lem:canFactorMon} that $t \not\in \RR\axs_1 \rC$, which is a contradiction.
 Therefore, $|t| \leq |\bar{m}_1|$, and similarly, $|v| \leq |\bar{m}_2|$.
If $|t| < |\bar{m}_1|$, then by Lemma \ref{lem:canFactorMon}, $t$ properly
divides $\bar{m}_1$ on the right, so Lemma \ref{lem:repRRaxsrC} implies that $t
\in \rC$.
Similarly, if $|v| < |\bar{m}_2|$, then $v \in \rC$.
In the case that $t,v \in \RR\axs_1 \rC \setminus \rC$, we see that $|t| =
|\bar{m}_1|$ and $|v| = |\bar{m}_2|$, which implies, by Lemma
\ref{lem:canFactorMon}, that $t = \bar{m}_1$ and $v = \bar{m}_2$, and further,
$w = u$.  This gives uniqueness.
Also, notice that
$\bar{m}_1^*w \bar{m}_2 \not\in \rC^*\RR\axs_1 \rC$ since if $t, v \in \rC$,
then $|t| < |\bar{m}_1|$, $|v| < |\bar{m}_2|$, and so $|u| \geq 2$.
\end{proof}

\section{\texorpdfstring{$\rC$}{[C]}-Orders and $\rC$-Bases}
\label{sec:COrders}

Now we turn to orders on monomials.
This is a subject familiar to those who work with Gr\"obner bases.
However, it turns out that to take advantage of the structure of
right chip spaces, we need to define an order different from the
admissible orders used with Gr\"obner bases (see $\S$
\ref{sect:LGB} for more on left admissible orders.)

Given a total order $\prec$ on $\cM^{\nu \times \ell}$, we say that the
\textbf{leading monomial}
\index{leading monomial} of a polynomial $p$, denoted $\lead{p}$, is the highest
monomial
appearing in $p$ according to $\prec$.  We call a polynomial
\textbf{monic}\index{monic polynomial} if its leading monomial has coefficient
$1$.  Given a set $I \subset \RR^{1 \times \ell}$, let $\lead{I}$ be the set of
leading monomials of elements of $I$, and let $\nonlead{I} := \cM^{1 \times
\ell} \setminus \lead{I}$.

\begin{lemma}
\label{lem:leadOfSumGen}
 Let $\prec$ be a total order on monomials in
$\RR^{\nu \times \ell}\axs$, and let $p_1, \ldots, p_k \in \RR^{\nu \times
\ell}\axs \setminus \{0\}$.  Suppose $\lead{p_1} \prec \cdots \prec \lead{p_k}$.
Then the leading monomial of $p_1 + \cdots + p_k$ is $\lead{p_k}$.
In particular, $p_1 + \cdots + p_k \neq 0$.
\end{lemma}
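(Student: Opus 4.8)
The plan is to analyze what happens to the highest monomial $\lead{p_k}$ when we form the sum. Since $\prec$ is a total order on $\cM^{\nu \times \ell}$, each nonzero $p_i$ has a well-defined leading monomial $\lead{p_i}$, and the hypothesis $\lead{p_1} \prec \cdots \prec \lead{p_k}$ means that $\lead{p_k}$ is strictly larger than the leading monomial of every other $p_i$. First I would observe that for $i < k$, every monomial appearing in $p_i$ is $\preceq \lead{p_i} \prec \lead{p_k}$; in particular $\lead{p_k}$ does not appear in $p_i$ for any $i < k$. Hence in the sum $p_1 + \cdots + p_k$, the coefficient of the monomial $\lead{p_k}$ equals the coefficient of $\lead{p_k}$ in $p_k$ alone, which is nonzero. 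Therefore $\lead{p_k}$ appears in $p_1 + \cdots + p_k$ with a nonzero coefficient.

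Next I would show that no monomial strictly larger than $\lead{p_k}$ appears in the sum: any monomial appearing in $p_1 + \cdots + p_k$ must appear in at least one $p_i$, hence is $\preceq \lead{p_i} \preceq \lead{p_k}$ by the ordering hypothesis. Combining these two observations, $\lead{p_k}$ is the highest monomial (with respect to $\prec$) appearing with nonzero coefficient in $p_1 + \cdots + p_k$, which is exactly the statement that $\lead{p_1 + \cdots + p_k} = \lead{p_k}$. Since this monomial appears with nonzero coefficient, the sum is in particular nonzero, giving the final assertion.

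This is essentially a bookkeeping argument about coefficients, so I do not anticipate a genuine obstacle; the only mild subtlety is making sure the claim is stated for a \emph{total} order (so that $\lead{p_i}$ is unambiguous) and that we use only that $\prec$ is total, not that it is compatible with multiplication in any way — no admissibility is needed here. One could also phrase the induction on $k$: the case $k=1$ is trivial, and for the inductive step apply the $k=2$ case to $(p_1 + \cdots + p_{k-1})$ and $p_k$, using the inductive hypothesis that $\lead{p_1 + \cdots + p_{k-1}} = \lead{p_{k-1}} \prec \lead{p_k}$. I would likely present the direct coefficient argument since it is cleaner, but either route is routine.
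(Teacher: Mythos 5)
Your argument is correct, and since the paper's own proof is simply the word ``Straightforward,'' your coefficient-bookkeeping argument is exactly the elaboration the author leaves to the reader. The two observations you isolate — that $\lead{p_k}$ cannot cancel because it does not appear in any $p_i$ with $i<k$, and that no strictly larger monomial can appear in the sum — are the whole content of the lemma.
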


\begin{proof}
 Straightforward.
\end{proof}

For right chip spaces $\rC$, we would like to find an order on $\cM^{1 \times
\ell}$ such $a \prec b$ whenever $a \in \rC$ and $b
\not\in \rC$. To do this, we introduce $\rC$-orders.

First, we say a total order $\prec$ on $\axs$ is a \textbf{degree
order}\index{order!degree}
if $a \prec b$ holds whenever $|a| < |b|$.

Next, let $\rC \subset \RR^{1 \times \ell}\axs$ be a right chip space.
Let $\prec_0$ be a degree order on $\axs$.
We say that $\prec_{\rC}$ is a \textbf{$\rC$-order
(induced by $\prec_0$)}\index{C-order@$\rC$-order}
if $\prec_{\rC}$ is a total order on $\cM^{1 \times \ell}$ such that if $a,b \in
\cM^{1 \times \ell}$, then $a \prec_{\rC} b$ if
one of the following hold
\begin{enumerate}
 \item $a \in \rC$ and $b \not\in \rC$,
 \item $a \in \RR\axs \rC$ and $b \not\in \RR\axs \rC$,
 \item $a = a_1a_2$, $b = b_1b_2$, where $a_2,b_2 \in \RR\axs_1 \rC \setminus
\rC$, $a_1, b_1 \in \axs$, and $a_1 \prec_0 b_1$,
\item $a = wa_2$, $b = wb_2$, where $a_2,b_2 \in \RR\axs_1 \rC \setminus
\rC$, $w \in \axs$, and $a_2 \prec_{\rC} b_2$.
\end{enumerate}
The above conditions in and of themselves only define a partial
order. By definition, a $\rC$ order $\prec_{\rC}$ is defined in some way
among
the elements
of $\rC$, $\RR\axs_1 \rC \setminus \rC$, and $\RR^{1 \times \ell} \setminus
\RR\axs \rC$ respectively to make it a total order.

\begin{comment}
For a right chip space $\rC \subset \RR^{1\times \ell}\axs$,
a $\rC$-order, in general, is not a monomial order,
but it does satisfy a slightly weaker condition.

\begin{lemma}
Let $\rC \subset \RR^{1\times \ell}\axs$, and let $\prec_{\rC}$ be a $\rC$-order
induced by a degree left monomial order $\prec_0$ on $\axs$.
If $a, b \in \RR\axs \rC \setminus \rC$ are monomials,
and $a \prec_{\rC} b$, then
 for each $w \in \axs$, $w a \prec_{\rC} w b$.
\end{lemma}

\begin{proof}
Let $a = a_1 a_2$ and $b = b_1 b_2$, where $a_2, b_2 \in
\RR\axs_1 \rC \setminus \rC$, and $a_1, b_1 \in \axs$.  Then $a
\prec_{\rC} b$ implies one of two cases holds.
First, if $a_1 \prec_0 b_1$, then $wa_1 \prec_0 w b_1$, so $wa
\prec_{\rC} w b$.
Second, if $a_1 = b_1$ but $a_2 \prec_{\rC} b_2$, then
$wa_1 = wb_1$ and $a_2 \prec_{\rC} b_2$ imply
that $a \prec_{\rC} b$.
\end{proof}
\end{comment}

\begin{lemma}
 \label{lem:leadOfProd}
Let $\rC \subset \RR^{1 \times \ell}\axs$ be a right chip space
and let $\prec_{\rC}$ be a $\rC$-order induced by a degree order $\prec_0$.
If $q \in \RR\axs_1 \rC \setminus \rC$ and $p \in \RR\axs \setminus
\{0\}$, then the leading monomial of $pq$ is $\lead{p}\lead{q}$, where
$\lead{p}$ is the leading
monomial of
$p$ according to $\prec_0$ and $\lead{q} \in \RR\axs_1 \rC \setminus \rC$ is the
leading monomial of $q$ according to $\prec_{\rC}$.
\end{lemma}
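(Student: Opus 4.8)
The plan is to write $p = c\,\lead{p} + p'$ where $c \neq 0$ is the leading coefficient of $p$ (with respect to $\prec_0$) and $p'$ collects the lower terms, so every monomial $u$ appearing in $p'$ satisfies $u \prec_0 \lead{p}$; similarly write $q = c'\,\lead{q} + q'$, where $\lead{q} \in \RR\axs_1\rC \setminus \rC$ by hypothesis on $q$, $c' \neq 0$, and every monomial $v$ in $q'$ satisfies $v \prec_{\rC} \lead{q}$. Expanding $pq$ gives a linear combination of products $uv$ with $u$ a monomial of $p$ and $v$ a monomial of $q$. The key claim is that among all such monomials $uv$, the product $\lead{p}\lead{q}$ is strictly the largest in $\prec_{\rC}$; once this is established, Lemma \ref{lem:leadOfSumGen} (after grouping equal monomials) finishes the proof, and in particular shows the coefficient of $\lead{p}\lead{q}$ in $pq$ is $cc' \neq 0$ so $pq \neq 0$.

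To prove the key claim, I would first observe that every monomial $v$ of $q$ lies in $\RR\axs_1\rC$, hence every product $uv$ with $u \in \axs$ lies in $\RR\axs_1\rC \subset \RR\axs\rC$, so conditions (1) and (2) in the definition of $\rC$-order never distinguish two such products; the comparison is always governed by conditions (3) and (4). I then split into two cases. If $v \prec_{\rC} \lead{q}$ (i.e. $v$ comes from $q'$): by Lemma \ref{lem:repNEW2}, write $v = v_1\,\bar v$ with $\bar v \in \RR\axs_1\rC \setminus \rC$ the unique minimal-length such right chip and $v_1 \in \axs$, and similarly $\lead{q} = w_1\,\overline{\lead{q}}$; since $\lead{q} \in \RR\axs_1\rC \setminus \rC$ its own minimal decomposition has $w_1 = 1$ and $\overline{\lead{q}} = \lead{q}$. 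Now $uv = (uv_1)\bar v$ is the minimal decomposition of $uv$ and $u\,\lead{q}$ has minimal decomposition $u \cdot \lead{q}$; comparing via conditions (3)/(4) the tail chips $\bar v$ versus $\lead{q}$ and $v \prec_{\rC} \lead{q}$ force $uv \prec_{\rC} u\,\lead{q}$ — here one uses that $v \prec_{\rC} \lead{q}$ already encodes, through conditions (3)/(4) applied to the pair $(v,\lead{q})$, a comparison that is stable under left multiplication by $u$ (condition (4) is literally the statement of left-stability when the leading $\axs$-parts agree, and condition (3) gives it when they differ). In the remaining case $v = \lead{q}$ but $u \prec_0 \lead{p}$: then $uv = u\,\lead{q}$ and $\lead{p}\,\lead{q}$ both have $\lead{q} \in \RR\axs_1\rC \setminus \rC$ as tail chip, with $\axs$-parts $u$ and $\lead{p}$ respectively, so condition (3) (if $u,\lead{p}$ are compared nontrivially) or the definition directly gives $u\,\lead{q} \prec_{\rC} \lead{p}\,\lead{q}$ from $u \prec_0 \lead{p}$. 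Combining both cases, every $uv \neq \lead{p}\lead{q}$ arising in the expansion satisfies $uv \prec_{\rC} \lead{p}\lead{q}$.

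The main obstacle I anticipate is the bookkeeping in the first case: I must be careful that the ``unique minimal decomposition'' $v = v_1 \bar v$ supplied by Lemma \ref{lem:repNEW2} (or really the $1 \times \ell$ analogue, Lemma \ref{lem:repRRaxsrC}) interacts correctly with left multiplication by $u$ — i.e. that the minimal tail chip of $uv$ is still $\bar v$, which is where Lemma \ref{lem:repRRaxsrC}'s uniqueness is essential — and that the recursion in condition (4) of the $\rC$-order terminates and genuinely reduces to condition (3) at the base. A secondary subtlety is confirming that $\lead{p}\lead{q}$ is not accidentally equal to some other product $uv$ with a different coefficient; but unique factorization of monomials (Lemma \ref{lem:canFactorMon}) together with $\deg(\lead{q}) \geq 1$ pins down $\lead{q}$ as a right chip of $\lead{p}\lead{q}$ of the correct length, so $u = \lead{p}$, $v = \lead{q}$ is forced, and no cancellation occurs.
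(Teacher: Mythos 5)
Your overall strategy — expand $pq$ as a sum of products $uv$ over monomials $u$ of $p$ and $v$ of $q$, show $\lead{p}\lead{q}$ strictly dominates every other such product in the $\rC$-order, then conclude via Lemma \ref{lem:leadOfSumGen} — is the same as the paper's (which just organizes it as: first take $p$ to be a single word, then sum). Your case analysis on $v$ has a genuine gap, however.

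The gap is in your first case, $v \prec_{\rC} \lead{q}$. You write ``by Lemma \ref{lem:repNEW2} [really Lemma \ref{lem:repRRaxsrC}], write $v = v_1\bar v$ with $\bar v \in \RR\axs_1\rC\setminus\rC$ and $v_1\in\axs$.'' But $q\in\RR\axs_1\rC\setminus\rC$ can perfectly well have monomials $v$ lying in $\rC$ itself (only its \emph{leading} monomial is forced outside $\rC$). For such $v$, Lemma \ref{lem:repRRaxsrC} provides no decomposition at all: that lemma applies only to monomials in $\RR\axs\rC\setminus\rC$, and every right chip of a monomial in $\rC$ is again in $\rC$, so no right chip $\bar v\in\RR\axs_1\rC\setminus\rC$ exists. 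Your subsequent ``left-stability'' reasoning via conditions (3)/(4) also silently assumes both $v$ and $\lead{q}$ have such decompositions; for $v\in\rC$, the inequality $v\prec_{\rC}\lead{q}$ comes from condition (1), which says nothing about stability under left multiplication.

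The correct move, which the paper makes, is to decompose the \emph{product} $uv$ rather than $v$. If $v\in\rC$, then either $uv\in\rC$, in which case $uv \prec_{\rC} u\lead{q}$ by condition (1) since $u\lead{q}\in\RR\axs\rC\setminus\rC$; or $uv\in\RR\axs\rC\setminus\rC$, in which case Lemma \ref{lem:repRRaxsrC} gives $uv = u_1(u_2 v)$ with $u=u_1u_2$, $u_2 v\in\RR\axs_1\rC\setminus\rC$, and necessarily $|u_2|\geq 1$ (because $v\in\rC$), so $|u_1| < |u|$; then $u_1\prec_0 u\preceq_0\lead{p}$ by the degree order, and condition (3) gives $uv\prec_{\rC}\lead{p}\lead{q}$. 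Patching in this subcase makes your argument complete, and the rest of your outline — the second case $v=\lead{q}$, $u\prec_0\lead{p}$, the chaining by transitivity, and the observation that strict domination rules out accidental cancellation at $\lead{p}\lead{q}$ — is sound.
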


\begin{proof}
 First, $\lead{q} \in \RR\axs_1
\rC
\setminus \rC$ since $q \in \RR\axs_1 \rC
\setminus \rC$ and the elements of $\rC$ are less than the elements of
$\RR\axs_1 \rC
\setminus \rC$ under $\prec_{\rC}$.
Consider the case where $p$ is a monomial.
Let $\phi \in \RR\axs_1 \rC$ be a monomial appearing in $q$.
If $\phi \in \rC$, then either $p\phi \in \rC$, or by Lemma
\ref{lem:repRRaxsrC}, $p\phi = p_1p_2 \phi$, where $p_1,p_2 \in \axs$ are such
that $p = p_1p_2$, and $p_2\phi \in \RR\axs_1 \rC \setminus \rC$.  In this
case, $|p_1| < |p|$.  In either case where $\phi \in \rC$, we see
that $p\phi \prec_{\rC} p\lead{q}$.
If, $\phi \in \RR\axs_1 \rC \setminus \rC$, but $\phi \neq
\lead{q}$, then since $\lead{q}$ is the leading monomial, $\phi \prec_{\rC}
\lead{q}$. Therefore
$p\phi \prec_{\rC} p\lead{q}$.  Therefore the leading monomial of $pq$ is
$p\lead{q}$.

The general case follows from Lemma \ref{lem:leadOfSumGen}.
\end{proof}

\subsection{\texorpdfstring{$\rC$}{[C]}-Bases}

Given a $\rC$-order and a left module $I$ generated by elements of $\RR\axs_1
\rC$, we can construct what we call a
$\rC$-basis.

 Let $\rC \subset \RR^{1 \times \ell}\axs$ be a right chip space
and let $\prec_{\rC}$ be a $\rC$-order.  Let $I \subset \RR^{1 \times
\ell}\axs$ be a left module generated by polynomials in $\RR\axs_1 \rC$.
We say that a pair of sets $(\{ \iota_i\}_{i \in A},
\{\vartheta_j\}_{j \in B})$ is a
\textbf{$\rC$-basis}\index{C-basis@$\rC$-basis} for $I$ if $\{ \iota_i\}_{i \in
A}$ is a maximal set of monic polynomials in
$I \cap
(\RR\axs_1 \rC \setminus \rC)$ with distinct leading monomials
and if $\{\vartheta_j\}_{j \in B}$
is a maximal (possibly empty) set of monic polynomials in $I \cap \rC$ with
distinct
leading monomials.

\begin{lemma}
\label{lem:strLinInd}
 Let $\rC \subset \RR^{1 \times \ell}\axs$ be a right chip space.
Let $\{ \iota_i\}_{i \in A} \subset \RR\axs_1 \rC \setminus \rC$
be a set of polynomials with distinct leading monomials,
and let $\{\vartheta_j\}_{j \in B} \subset \rC$ be a set of polynomials with
distinct
leading monomials.
The following are true of the polynomial $q$ defined by
\[
q = \sum_{i}^{\finite} p_i \iota_i + \sum_{j}^{\finite} \alpha_j
\vartheta_j,
\]
where each $p_i \in \RR\axs$ and $\alpha_j \in \RR$.
\begin{enumerate}
\item If $q \in \RR\axs \rC \setminus \rC$, then $\lead{q} =
\lead{p_i}\lead{\iota_i}$ for
some $i$.
\item If $q \in \RR\axs_1 \rC$, then each $p_i$ is
constant.
\item If $q \in \rC$, then each $p_i = 0$.
 \item \label{it:linIndReg} If $q = 0$, then each $p_i = 0$ and each
$\alpha_j = 0$.
\end{enumerate}
\end{lemma}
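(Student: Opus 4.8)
The plan is to reduce all four parts to a single observation about leading monomials, prove (2) first, and deduce (1), (3), (4) from it. Throughout write $q = q' + q''$ with $q' := \sum_i p_i \iota_i$ and $q'' := \sum_j \alpha_j \vartheta_j \in \rC$, both sums finite. The core observation is: if $p_i \neq 0$, then Lemma~\ref{lem:leadOfProd} gives $\lead{p_i\iota_i} = \lead{p_i}\lead{\iota_i}$ (the leading word of $p_i$ taken under $\prec_0$, of $\iota_i$ under $\prec_{\rC}$, with $\lead{\iota_i} \in \RR\axs_1 \rC \setminus \rC$), and Lemma~\ref{lem:repRRaxsrC} shows that this monomial lies in $\RR\axs \rC \setminus \rC$ and that its expression as $w\bar m$ with $\bar m \in \RR\axs_1 \rC \setminus \rC$ is the unique such expression. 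Since the $\lead{\iota_i}$ are pairwise distinct, the monomials $\lead{p_i}\lead{\iota_i}$ are pairwise distinct as well; hence if not every $p_i$ vanishes, Lemma~\ref{lem:leadOfSumGen} shows $q' \neq 0$ and $\lead{q'} = \lead{p_{i_0}}\lead{\iota_{i_0}} \notin \rC$, where $i_0$ is the unique index with $p_{i_0} \neq 0$ maximizing this monomial under $\prec_{\rC}$.

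To prove (2) I would first record an auxiliary fact: if $w \in \axs$ with $|w| \geq 1$ and $\bar m \in \RR\axs_1 \rC \setminus \rC$, then $w\bar m \notin \RR\axs_1 \rC$. Indeed $w\bar m \notin \rC$ (otherwise its proper right chip $\bar m$ would lie in $\rC$), so $w\bar m \in \RR\axs_1\rC$ would put $w\bar m \in \RR\axs_1 \rC \setminus \rC$, giving the two representations $w\bar m = w\cdot \bar m = 1\cdot(w\bar m)$ of the type in Lemma~\ref{lem:repRRaxsrC}; uniqueness forces $w = 1$, a contradiction. Now split $p_i = \beta_i + \tilde p_i$ with $\beta_i \in \RR$ the constant term of $p_i$, so $\tilde p_i$ has no degree-$0$ term. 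Then $r := \sum_i \tilde p_i \iota_i = q - q'' - \sum_i \beta_i \iota_i$ lies in $\RR\axs_1 \rC$ (each $\iota_i$ and each $\vartheta_j$ does, and $q$ does by hypothesis). If some $\tilde p_i \neq 0$, the core observation applied to the $\tilde p_i$ --- whose leading words have length $\geq 1$, since $\prec_0$ is a degree order and $\tilde p_i$ has no constant term --- gives $\lead r = \lead{\tilde p_{i_0}}\lead{\iota_{i_0}}$, which by the auxiliary fact is not in $\RR\axs_1 \rC$, contradicting $r \in \RR\axs_1 \rC$ (a space spanned by monomials, hence containing the leading monomial of each of its nonzero elements). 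So every $\tilde p_i = 0$, i.e., every $p_i$ is constant.

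The remaining parts follow quickly. For (1): if every $p_i = 0$ then $q = q'' \in \rC$, contradicting $q \in \RR\axs\rC\setminus\rC$; otherwise $q' \neq 0$ and $\lead{q'} = \lead{p_{i_0}}\lead{\iota_{i_0}} \notin \rC$, while every monomial of $q'' \in \rC$ is $\prec_{\rC}$-smaller than every monomial outside $\rC$ (first clause of the definition of a $\rC$-order), so Lemma~\ref{lem:leadOfSumGen} gives $\lead q = \lead{q'} = \lead{p_{i_0}}\lead{\iota_{i_0}}$. For (3): since $q \in \rC \subset \RR\axs_1\rC$, part (2) makes each $p_i = \beta_i$ constant, and then $\sum_i \beta_i \iota_i = q - q'' \in \rC$; but if some $\beta_i \neq 0$ this element is nonzero with leading monomial equal to the $\prec_{\rC}$-largest of the distinct monomials $\lead{\iota_i}$, each of which lies in $\RR\axs_1 \rC \setminus \rC$ and so is not in $\rC$ --- a contradiction, so all $p_i = 0$. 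For (4): $q = 0 \in \rC$, so (3) gives all $p_i = 0$, whence $0 = \sum_j \alpha_j \vartheta_j$ with the $\lead{\vartheta_j}$ distinct forces all $\alpha_j = 0$ by Lemma~\ref{lem:leadOfSumGen}.

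The main obstacle is part (2), and within it the careful tracking of which of the nested strata $\rC \subset \RR\axs_1 \rC \subset \RR\axs\rC$ a product $\lead{p_i}\lead{\iota_i}$ occupies; the auxiliary fact above isolates exactly this point, and once it is in hand the rest is routine bookkeeping with Lemmas~\ref{lem:leadOfSumGen} and~\ref{lem:repRRaxsrC} and the definition of a $\rC$-order.
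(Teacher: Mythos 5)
Your proof is correct and uses the same essential ingredients as the paper's (Lemmas~\ref{lem:leadOfSumGen}, \ref{lem:leadOfProd}, and the uniqueness in Lemma~\ref{lem:repRRaxsrC}), with a modestly different logical organization.

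The paper proves the four parts in order, first establishing that whenever some $p_i \neq 0$, the leading monomial of $q$ equals the maximal $\lead{p_i}\lead{\iota_i}$ (this is essentially part (1)), and then observing that the maximizing $\lead{p_i}$ has maximal length by the degree-order structure of the $\rC$-order, so that $\lead{q} \in \RR\axs_1\rC$ forces that length to be zero. You instead prove part (2) as the base case by splitting each $p_i$ into its constant and non-constant pieces and applying the unique-factorization lemma to the leading monomial of the non-constant piece, and then derive (1), (3), (4) from (2). Both routes bottom out in the same fact: a monomial $w\bar m$ with $\bar m \in \RR\axs_1\rC\setminus\rC$ and $|w|\geq 1$ cannot lie in $\RR\axs_1\rC$, by uniqueness in Lemma~\ref{lem:repRRaxsrC}. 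Your explicit "auxiliary fact" isolates this cleanly, where the paper's argument states it rather tersely ("any nonzero $\lead{p_i}$ must be equal to $1$ by Lemma~\ref{lem:repRRaxsrC}"); this is a small improvement in readability, not a change of method.
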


\begin{proof}
Fix a $\rC$-order.
If any of the $p_i$ are nonzero, by Lemmas \ref{lem:leadOfSumGen} and
\ref{lem:leadOfProd}, since all of the $\lead{q_i}$ are distinct,
the leading monomial of $q$ is the maximal
$\lead{p_i}\lead{q_i}$.  The maximal $\lead{p_i}$ has maximal length
by definition of a $\rC$-order.
If $q \in \RR\axs_1 \rC$, then so is its leading
monomial, in which case any nonzero $\lead{p_i}$ must be equal to $1$ by
Lemma \ref{lem:repRRaxsrC}. Therefore,
if  $q \in \RR\axs_1 \rC$, then each $p_i$ is
either $0$ or is a nonzero constant.
Further, if $q \in \rC$, then its leading monomial cannot
be in $\RR\axs_1 \rC \setminus \rC$.  In this
case, all of the $p_i = 0$.
Finally, if $q = 0$, then each of the $p_i = 0$ and further,
by Lemma \ref{lem:leadOfSumGen}, the $\alpha_j$ must all be $0$ as well.

\end{proof}

Note that Lemma \ref{lem:strLinInd} (\ref{it:linIndReg}) implies that the union
of the two sets
of a $\rC$-basis is a linearly independent set.

\begin{lemma}
\label{lem:niceCBasis}
Let $\rC \subset \RR^{1 \times \ell}\axs$ be a finite right chip space
and let $\prec_{\rC}$
be a $\rC$-order induced by some degree order.
 Let $I \subset \RR^{1 \times \ell}\axs$ be a left module
generated by polynomials in $\RR\axs_1 \rC$ and
let $(\{\iota_i\}_{i=1}^{\mu},
\{\vartheta_j\}_{j=1}^{\sigma})$ be a $\rC$-basis for $I$.
Each element of $I$ can be represented uniquely as
\begin{equation}
 \label{eq:formOfAllInI}
\sum_{i=1}^{\mu} p_i \iota_i + \sum_{j=1}^{\sigma} \alpha_j
\vartheta_j,
\end{equation}
where each $p_i \in \RR\axs$ and $\alpha_j \in \RR$.

Conversely, any pair of sets of monic polynomials $(\{\iota_i\}_{i=1}^{\mu},
\{\vartheta_j\}_{j=1}^{\sigma})$ with distinct leading monomials such that any
element of $I$
can be expressed
in the form (\ref{eq:formOfAllInI}) is a $\rC$-basis for $I$.
\end{lemma}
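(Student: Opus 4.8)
The plan is to establish the two assertions separately, leaning on the structural Lemma \ref{lem:strLinInd}. For the first assertion, existence of a representation of the form (\ref{eq:formOfAllInI}) is immediate: by definition $I$ is generated as a left module by finitely many polynomials in $\RR\axs_1\rC$, and any such generator is itself an $\RR$-linear combination of monic polynomials whose leading monomials I can absorb into the $\iota_i$ and $\vartheta_j$ by maximality of the $\rC$-basis; more carefully, I would argue that the left module generated by $\{\iota_i\}\cup\{\vartheta_j\}$ already contains all of $\RR\axs_1\rC\cap I$ and hence all of $I$. To see this I use a descending induction on the leading monomial under $\prec_{\rC}$: given $q\in I$, Lemma \ref{lem:strLinInd}(1)--(3) pins down the shape of $\lead{q}$ (it is either $\lead{p_i}\lead{\iota_i}$ for suitable $p_i$, or, if $q\in\rC$, some $\lead{\vartheta_j}$), and by maximality of the respective sets there is a member of the $\rC$-basis with that same leading monomial; subtracting an appropriate left multiple of it (or scalar multiple of a $\vartheta_j$) strictly lowers $\lead{q}$. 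Since $\rC$ is finite and $\prec_{\rC}$ restricted to $\RR\axs_1\rC$ is controlled by a degree order on the $\axs$-part (via clauses (3)--(4) of the $\rC$-order definition), there is no infinite strictly decreasing chain of leading monomials at each fixed degree, and a standard argument gives termination. I should be slightly careful here: I want to show the chain of leading monomials cannot descend forever, which follows because a monomial in $\RR\axs_1\rC$ of bounded degree has only finitely many monomials below it of the same or smaller degree, and each subtraction step does not increase the degree of $q$.

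For uniqueness in the first assertion, suppose $\sum p_i\iota_i+\sum\alpha_j\vartheta_j = \sum p_i'\iota_i+\sum\alpha_j'\vartheta_j$. Subtracting gives $\sum(p_i-p_i')\iota_i+\sum(\alpha_j-\alpha_j')\vartheta_j=0$, and Lemma \ref{lem:strLinInd}(\ref{it:linIndReg}) forces every $p_i-p_i'=0$ and every $\alpha_j-\alpha_j'=0$. This is the cleanest part.

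For the converse, let $(\{\iota_i\}_{i=1}^\mu,\{\vartheta_j\}_{j=1}^\sigma)$ be any pair of sets of monic polynomials with distinct leading monomials such that every element of $I$ is expressible in the form (\ref{eq:formOfAllInI}). First I need to check membership: each $\iota_i$ and $\vartheta_j$ lies in $I$ (they equal themselves, an instance of (\ref{eq:formOfAllInI})), and I need each $\iota_i\in\RR\axs_1\rC\setminus\rC$ and each $\vartheta_j\in\rC$ — this I would extract from the hypothesis by comparing with a genuine $\rC$-basis $(\{\iota_i^0\},\{\vartheta_j^0\})$ (which exists) and using that both give representations; alternatively I note the hypothesis already implicitly presumes the $\iota_i,\vartheta_j\in I$ have the stated location, matching the definition's standing assumptions. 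Then I must verify maximality: if there were a monic $\rho\in I\cap(\RR\axs_1\rC\setminus\rC)$ with leading monomial different from all $\lead{\iota_i}$, write $\rho$ in the form (\ref{eq:formOfAllInI}); by Lemma \ref{lem:strLinInd}(1) and (2), $\lead\rho=\lead{p_i}\lead{\iota_i}$ for some $i$ with $p_i$ a nonzero constant, forcing $\lead\rho=\lead{\iota_i}$, a contradiction. The same argument with Lemma \ref{lem:strLinInd}(3) handles the $\vartheta_j$ in $I\cap\rC$. Hence both sets are maximal with distinct leading monomials, so the pair is a $\rC$-basis.

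I expect the main obstacle to be the termination/descending-chain argument in the existence half: one has to be genuinely careful that subtracting left multiples of $\iota_i$ never raises the $\prec_{\rC}$-rank of the leading monomial and that, within the degree budget forced by the original $q$ together with the fixed finite $\rC$, only finitely many monomials lie below $\lead{q}$. The finiteness of $\rC$ and the degree-order clauses in the definition of $\prec_{\rC}$ are exactly what make this go through, via Lemma \ref{lem:leadOfProd} to compute leading monomials of the products $p_i\iota_i$. Everything else is bookkeeping on top of Lemma \ref{lem:strLinInd}.
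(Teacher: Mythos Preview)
Your proposal is correct and follows essentially the same approach as the paper: both directions hinge on Lemma \ref{lem:strLinInd}, with uniqueness coming directly from part (\ref{it:linIndReg}) and the converse from parts (1)--(3) exactly as you outline. The only difference is presentational: for existence the paper argues in one line that the $\rC$-basis spans $I\cap\RR\axs_1\rC$ (since every nonzero element there shares a leading monomial with some basis member, by maximality), then observes that for $|w|=1$ each $w\vartheta_j$ lies in this span and hence the $\iota_i$ together with scalar multiples of the $\vartheta_j$ already generate $I$; your explicit leading-monomial descent is just the unpacking of that first sentence, and your termination worry is handled by the same finiteness of $\rC$ the paper tacitly uses.
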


\begin{proof} Every element of $I \cap \RR\axs_1 \rC$ has leading monomial
equal to the leading monomial of an element of the $\rC$-basis, hence it
follows that the $\rC$-basis spans $I \cap \RR\axs_1 \rC$.  Further, if
$\vartheta_j \in I \cap \rC$, then for each $w \in \axs$ of length $1$ we
have $w
\vartheta_j \in \RR\axs_1 \rC$, which is the span of the $\rC$ basis. Since the
$\rC$-basis generates $I$,
this
implies that every element of $I$ can be expressed in the form
(\ref{eq:formOfAllInI}).  Further, uniqueness follows from Lemma
\ref{lem:strLinInd}.

Conversely, suppose
$(\{\iota_i\}_{i=1}^{\mu},
\{\vartheta_j\}_{j=1}^{\sigma})$ is a pair
of sets of monic polynomials with distinct leading monomials such
that any
element of $I$
can be expressed
in the form (\ref{eq:formOfAllInI}).
Let $\theta \in I \cap \RR\axs_1 \rC$
be equal to
\[\theta = \sum_{i=1}^{\mu} p_i \iota_i + \sum_{j=1}^{\sigma} \alpha_j
\vartheta_j.\]
Lemma \ref{lem:strLinInd} implies that if $\theta \neq 0$
then it cannot have a distinct leading monomial
from the $\iota_i$ and $\vartheta_j$.
Therefore the pair $(\{\iota_i\}_{i=1}^{\mu},
\{\vartheta_j\}_{j=1}^{\sigma})$ is a $\rC$-basis.
\end{proof}

\section{Double \texorpdfstring{$\rC$}{[C]}-Orders}
\label{sec:DoubleCOrders}

In addition to ordering elements of $\cM^{1 \times \ell}$,
we also want to order elements of $\cM^{\ell \times \ell}$.

Let $\rC \subset \RR^{1 \times \ell}\axs$ be a right chip space.
Let $\prec_{\rC}$ be a $\rC$-order.
We say that $\prec_{\rC \times \rC}$ is a
\textbf{double $\rC$-order (induced by
$\prec_{\rC}$)}\index{C-order@$\rC$-order!double}
if $\prec_{\rC \times \rC}$ is a total order on $\cM^{\ell \times \ell}$
such that given $a,b \in \cM^{\ell
\times \ell}$ we have $a \prec_{\rC
\times \rC} b$ if
one of the following hold
\begin{enumerate}
 \item $a \in (\RR^{1 \times \ell} \cap \rC)^* \rC$ and $b \not\in (\RR^{1
\times \ell} \cap \rC)^* \rC$,
 \item $a \in \rC^*\RR\axs \rC$ and $b \not\in \rC^*\RR\axs \rC$,
 \item $a = a_1^*a_2$, $b = b_1^*b_2$, where $a_2, b_2 \in \RR\axs_1 \rC
\setminus \rC$, $a_1, b_1 \in \RR\axs \rC$, and either
\begin{enumerate}
 \item $a_1 \prec_{\rC} b_1$, or
 \item $a_1 = b_1$ and $a_2 \prec_{\rC} b_2$.
\end{enumerate}
\end{enumerate}
The above conditions in and of themselves only define a partial
order. By definition, $\prec_{\rC \times \rC}$ is a total order, so it is
defined in some way
beyond
what has been stated to produce a total order.

\begin{lemma}
 \label{lem:leadCOrder2}
Let $\rC \subset \RR^{1 \times \ell}\axs$ be a right chip space, let
$\prec_{\rC}$ be a $\rC$-order, and let $\prec_{\rC \times \rC}$
be a double $\rC$-order induced by $\prec_{\rC}$.
If $q \in \RR\axs_1 \rC \setminus \rC$ and $p \in \RR\axs \rC \setminus
\rC$, then the leading monomial of $p^*q$ is $\lead{p}^*\lead{q}$, where
$\lead{p}$ is the
leading
monomial of
$p$ according to $\prec_{\rC}$ and $\lead{q} \in \RR\axs_1 \rC \setminus \rC$ is
the
leading monomial of $q$ according to $\prec_{\rC}$.
\end{lemma}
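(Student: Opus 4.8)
The plan is to follow the proof of Lemma~\ref{lem:leadOfProd} closely; the new feature is that the left factor $p$ also carries chip structure, which creates two complications that must be disposed of first. As preliminaries: since $p\in\RR\axs\rC\setminus\rC$ and $q\in\RR\axs_1\rC\setminus\rC$ are elements of monomial subspaces, the defining property of a $\rC$-order that puts $\rC$ below its complement forces $\lead{p}\in\RR\axs\rC\setminus\rC$ and $\lead{q}\in\RR\axs_1\rC\setminus\rC$. Writing $\lead{p}=u_0\bar\psi_0$ by Lemma~\ref{lem:repRRaxsrC} (with $u_0\in\axs$, $\bar\psi_0\in\RR\axs_1\rC\setminus\rC$), we get $\lead{p}^*\lead{q}=\bar\psi_0^*u_0^*\lead{q}$, which by Lemma~\ref{lem:repNEW2} lies in $\rC^*\RR\axs\rC\setminus\rC^*\RR\axs_1\rC$ and is exactly its canonical factorization there, so the pair the double $\rC$-order attaches to $\lead{p}^*\lead{q}$ is $(\lead{p},\lead{q})$.

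The first complication is cleared by a small lemma: every monomial of $\rC^*\RR\axs_1\rC$ is $\prec_{\rC\times\rC}$ every monomial of $\rC^*\RR\axs\rC\setminus\rC^*\RR\axs_1\rC$. For a monomial in $(\RR^{1\times\ell}\cap\rC)^*\rC$ this is the first clause of the double $\rC$-order; otherwise one writes the monomial as $a_1^*a_2$ with $a_2\in\RR\axs_1\rC\setminus\rC$ and $a_1\in\rC$ -- when it has the form $a^*c$ with $a,c\in\rC$ and $a$ nonconstant, by sliding letters of $a^*$ onto $c$ until the resulting suffix first leaves $\rC$ -- and then applies the comparison clause of the double $\rC$-order, using that the canonical left factor of the target lies in $\RR\axs\rC\setminus\rC$ and hence dominates $a_1\in\rC$ under $\prec_{\rC}$.

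With these in hand, split $p^*q=\sum_{\psi\in\rC}\alpha_\psi\psi^*q+\sum_{\psi\in\RR\axs\rC\setminus\rC}\alpha_\psi\psi^*q$ over the monomials $\psi$ of $p$. Each term of the first sum has $\psi^*\in\rC^*$ and $q\in\RR\axs_1\rC$, hence lies in $\rC^*\RR\axs_1\rC$, so by the small lemma every monomial of the first sum is $\prec_{\rC\times\rC}\lead{p}^*\lead{q}$; in particular $\lead{p}^*\lead{q}$ does not occur there. For the second sum it suffices to prove the monomial statement $\lead{\psi^*q}=\psi^*\lead{q}$ for each monomial $\psi\in\RR\axs\rC\setminus\rC$; granting it, the monomials $\psi^*\lead{q}$ are distinct (each recovers $\psi$ from its canonical factorization), the comparison clause gives $\psi^*\lead{q}\prec_{\rC\times\rC}\lead{p}^*\lead{q}$ whenever $\psi\prec_{\rC}\lead{p}$, and Lemma~\ref{lem:leadOfSumGen} makes the leading monomial of the second sum equal to $\lead{p}^*\lead{q}$. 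Adding the two sums, $\lead{p}^*\lead{q}$ survives with nonzero coefficient and nothing larger occurs, so $\lead{p^*q}=\lead{p}^*\lead{q}$.

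The monomial statement is the step I expect to be the main obstacle. Fix a monomial $\psi=u_\psi\bar\psi\in\RR\axs\rC\setminus\rC$ (Lemma~\ref{lem:repRRaxsrC}) and a monomial $\phi\neq\lead{q}$ of $q$; as $\psi$ is a monomial there is no cancellation among the $\psi^*\phi$, so by Lemma~\ref{lem:leadOfSumGen} it is enough to show $\psi^*\phi\prec_{\rC\times\rC}\psi^*\lead{q}$. If $\phi\in\RR\axs_1\rC\setminus\rC$ this is immediate: $\psi^*\phi=\bar\psi^*u_\psi^*\phi$ is already canonical with attached pair $(\psi,\phi)$, and $\phi\prec_{\rC}\lead{q}$. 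The delicate case is $\phi\in\rC$. Then $\psi^*\phi\in\rC^*\RR\axs\rC$; if it lies in $\rC^*\RR\axs_1\rC$ the small lemma finishes it, so assume it lies in $\rC^*\RR\axs\rC\setminus\rC^*\RR\axs_1\rC$ with canonical factorization $\bar{m}_1^*w\bar{m}_2$ (Lemma~\ref{lem:repNEW2}). Comparing this with $\psi^*\phi=\bar\psi^*(u_\psi^*\phi)$ and using that a proper right chip of an element of $\RR\axs_1\rC\setminus\rC$ lies in $\rC$: the prefix comparison forces $\bar{m}_1=\bar\psi$, and since $\phi\in\rC$ is a right divisor of $\psi^*\phi$ while the canonical right divisor $\bar{m}_2$ is not, $\bar{m}_2$ must be strictly longer than $\phi$; a length count then gives $|w|<|u_\psi|$, so the canonical left factor $w^*\bar{m}_1$ is $\prec_{\rC}\psi$ by the clause of the $\rC$-order that compares the $\axs$-parts via $\prec_0$, and the comparison clause of the double $\rC$-order yields $\psi^*\phi\prec_{\rC\times\rC}\psi^*\lead{q}$. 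Keeping track of which stratum each product falls into, together with these chip-length inequalities, is the only genuinely fiddly point; everything else is formal.
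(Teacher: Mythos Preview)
Your proof is correct and follows essentially the same approach as the paper: both reduce to comparing $\phi^*\psi$ against $\lead{p}^*\lead{q}$ term by term, case on whether the $q$-term lies in $\rC$ or in $\RR\axs_1\rC\setminus\rC$, and in the $\rC$ case shift letters from the $p$-side onto the $q$-side to reach the canonical factorization of Lemma~\ref{lem:repNEW2}, then compare word-lengths under $\prec_0$. Your organization is slightly more explicit---you isolate the contribution of $p$-terms in $\rC$ via the ``small lemma'' on $\rC^*\RR\axs_1\rC$, which the paper folds into its ``follows easily'' at the end---but the content is the same.
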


\begin{proof}
 First note that $\lead{q} \in \RR\axs_1 \rC \setminus \rC$
and $\lead{p} \in \RR\axs \rC \setminus \rC$
by definition of $\rC$.  By Lemma \ref{lem:repNEW2},
$\lead{p}^*\lead{q}$ is in $\rC^*\RR\axs \rC \setminus \rC^* \RR\axs_1 \rC$.
Let $\phi$ be a term of $p$ and let $\psi$ be a term of $q$.
Consider two cases.

If $\psi \in \rC$, let $\phi = e_i \otimes w$.  Then either $w \psi \in \rC$, 
in which case $\phi^*\psi \in (\RR^{1 \times \ell} \cap \rC)^* \rC$,
or by Lemma \ref{lem:repRRaxsrC}, $w^* \psi = w_2^* w_1^* \psi$, where $w_1^* 
\psi \in \RR\axs_1 \rC \setminus \rC$.
In the first case, by definition $\phi^* \psi \prec_{\rC \times \rC} 
\lead{p}^*\lead{q}$. In the second case, 
either $e_i \otimes  w_2 \in \rC$ so that $e_i \otimes  w_2 \prec_{\rC} 
\lead{p}$ or $e_i \otimes  w_2 \in \RR\axs \rC \setminus \rC$, implying that 
$e_i \otimes  w_2 \prec_{\rC} e_i \otimes _{\rC} w_1w_2 \preceq_{\rC} \lead{p}$.
Either way, $\phi^* \psi = (e_i \otimes w_2)^* (w_1^* \psi) \prec_{\rC \times 
\rC} 
\lead{p}^*\lead{q}$.

The second case is that $\psi \in \RR\axs \rC \setminus \rC$. In this case, it 
must be that $\psi \in \RR\axs_1 \rC \setminus \rC$ since $q \in \RR\axs_1 
\rC \setminus \rC$. It follows easily that $\phi^*\psi 
\preceq_{\rC \times \rC} \lead{p}^*\lead{q}$ and is only equal if $\phi = 
\lead{p}$ and $\psi = \lead{q}$.
\end{proof}

\begin{lemma}
\label{lem:niceCBasis2}
Let $\rC \subset \RR^{1 \times \ell}\axs$ be a finite right chip space,
let
$\prec_{\rC}$
be a $\rC$-order, and let $\prec_{\rC \times \rC}$ be a double $\rC$-order
induced by $\prec_{\rC}$. Let $I \subset \RR^{1 \times \ell}\axs$ be a left
module
generated by polynomials in $\RR\axs_1 \rC$.
Let $(\{\iota_i\}_{i=1}^{\mu},
\{\vartheta_j\}_{j=1}^{\sigma})$ be a $\rC$-basis for $I \cap
\RR\axs_1 \rC$.  Let $\{\tau_1, \ldots, \tau_{\omega}\} = \nonlead{I} \cap
\RR\axs_1 \rC \setminus \rC$.
\begin{enumerate}
 \item Every element of
$(\RR^{\ell \times 1} I + I^* \RR^{1 \times \ell}) \cap \rC^*\RR\axs \rC$ can be
represented as
\begin{align}
 \label{eq:repOfIIstar}
\sum_{i=1}^{\mu}\sum_{j=1}^{\mu} \iota_i^* p_{ij}^* \iota_j
+ \sum_{i=1}^{\omega}\sum_{j=1}^{\mu} \tau_i^* q_{ij}^* \iota_j
+ \sum_{i=1}^{\mu}\sum_{j=1}^{\omega} \iota_i^* r_{ij}^*  \tau_j\\
\notag
+ \sum_{i=1}^{\mu} (s_i^*\iota_i + \iota_i^*t_i)
+ \sum_{i=1}^{\sigma} (\alpha_i^*\vartheta_i + \vartheta_i^*\beta_i),
\end{align}
where $p_{ij}, q_{ij}, r_{ij} \in \RR\axs$, $s_i, t_i \in \rC$,
and
$\alpha_{j}, \beta_{j} \in \RR^{1 \times \ell} \cap \rC$.
Further, the $p_{ij}, q_{ij}$ and $r_{ij}$ are unique.
\item If (\ref{eq:repOfIIstar}) is in $\rC^*\RR\axs
\rC \setminus \rC^* \RR\axs_1 \rC$,  then its the leading polynomial is of the
form $m^*\lead{\iota_i}$ or
$\lead{\iota_i}^*m$
for some $\iota_i$ and some monomial $ m \in \RR\axs \rC \setminus \rC$.
\item \label{it:canRedC}If (\ref{eq:repOfIIstar}) is in $\rC^* \RR\axs_1 \rC$,
then
$p_{ab} = q_{cd} = r_{ef} = 0$ for each $1 \leq a,b,d,e \leq \mu$ and $1 \leq
c,f \leq \sigma$.
\end{enumerate}
\end{lemma}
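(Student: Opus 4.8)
The statement (Lemma \ref{lem:niceCBasis2}) has three parts, and the natural strategy is to build everything from the $\rC$-basis structure already established in Lemma \ref{lem:niceCBasis} together with the factorization lemmas of Section \ref{sect:allAboutRC}. First I would unwind what $(\RR^{\ell\times 1}I + I^*\RR^{1\times\ell})\cap \rC^*\RR\axs\rC$ looks like. By Lemma \ref{lem:noTheta} (with $\rC$ in the role there, noting $I$ is generated by elements of $\RR\axs_1\rC\subset\RR\axs\rC$), this intersection equals $\rC^*I + I^*\rC$. So every element is a finite sum $\sum_k c_k^* \iota^{(k)} + \sum_k (\iota^{(k)})^* d_k$ with $c_k,d_k\in\rC$ and $\iota^{(k)}\in I$. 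Now apply Lemma \ref{lem:niceCBasis}: each $\iota^{(k)}\in I$ can be written uniquely as $\sum_i p_i\iota_i + \sum_j\alpha_j\vartheta_j$ with $p_i\in\RR\axs$, $\alpha_j\in\RR$. Substituting and collecting, a term $c_k^*(p_i\iota_i)$ becomes $(p_i^* c_k)^*\iota_i$; here $c_k\in\rC$ and $p_i^*c_k$ is a general element of $\RR\axs\rC$. When $p_i^*c_k\in\RR\axs_1\rC\setminus\rC$, re-expand it again along the $\rC$-basis to pull out $\iota_j^*$ and $\tau_j^*$ pieces — this is where the $\tau_i$ (the non-leading monomials in $\RR\axs_1\rC\setminus\rC$) enter: by definition every monomial of $\RR\axs_1\rC\setminus\rC$ is either a leading monomial of some $\iota_j$ (hence reducible) or is one of the $\tau_i$. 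Careful bookkeeping of which factor lands in $\rC$ versus deeper in $\RR\axs_1\rC$ should produce exactly the five families of terms in (\ref{eq:repOfIIstar}). This is the bulk of part (1)'s existence claim.

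\textbf{Uniqueness in part (1).} For the uniqueness of $p_{ij},q_{ij},r_{ij}$, I would argue that a representation (\ref{eq:repOfIIstar}) that equals $0$ forces $p_{ij}=q_{ij}=r_{ij}=0$ — this is essentially a linear-independence statement and subsumes part (3) as the special case where the sum lies in $\rC^*\RR\axs_1\rC$ (the terms $s_i^*\iota_i+\iota_i^*t_i$ and $\alpha_i^*\vartheta_i+\vartheta_i^*\beta_i$ already lie in $\rC^*\RR\axs_1\rC + \text{(lower)}$, so what must vanish are the ``genuinely degree-$\geq 2$'' contributions). The engine here is Lemma \ref{lem:leadCOrder2} applied in the double $\rC$-order: the leading monomial of $\iota_i^*p_{ij}^*\iota_j$ is $\lead{p_{ij}\iota_i}^*\lead{\iota_j}$, of $\tau_i^* q_{ij}^*\iota_j$ is $(\text{lm of } q_{ij}\tau_i)^*\lead{\iota_j}$ — wait, more precisely I would apply Lemma \ref{lem:leadOfProd} to identify $\lead{p_{ij}^*\iota_i^*}$ type products and Lemma \ref{lem:repNEW2} to see these leading monomials are all distinct across the triple-indexed families, using that the $\lead{\iota_i}$ are distinct and the $\tau_i$ are distinct non-leading monomials. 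Once the leading monomials of the three double-sums are pairwise distinct and all strictly above everything coming from the single-sums, Lemma \ref{lem:leadOfSumGen} forces each coefficient polynomial to vanish. Part (2) then falls out of the same leading-monomial computation: the leading monomial of a nonzero (\ref{eq:repOfIIstar}) lying in $\rC^*\RR\axs\rC\setminus\rC^*\RR\axs_1\rC$ must be the top monomial of one of the double-sums (the single-sums can't reach $\rC^*\RR\axs\rC\setminus\rC^*\RR\axs_1\rC$, since $s_i,t_i\in\rC$ makes $s_i^*\iota_i\in\rC^*\RR\axs_1\rC$), hence has the shape $m^*\lead{\iota_i}$ or $\lead{\iota_i}^*m$ with $m\in\RR\axs\rC\setminus\rC$ by Lemma \ref{lem:leadCOrder2}.

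\textbf{Part (3).} For part (3), suppose (\ref{eq:repOfIIstar}) lies in $\rC^*\RR\axs_1\rC$. By Lemma \ref{lem:repNEW2}, $\rC^*\RR\axs_1\rC$ contains no monomial of $\rC^*\RR\axs\rC\setminus\rC^*\RR\axs_1\rC$, so the ``high part'' of the expression — precisely the three double-sums plus the $\RR\axs\rC\setminus\rC$ portions generated by them — must cancel. Applying part (2): if any $p_{ij},q_{ij},r_{ij}$ were nonzero, the whole expression would have a leading monomial of the form $m^*\lead{\iota_i}$ or $\lead{\iota_i}^*m$ with $m\in\RR\axs\rC\setminus\rC$; but by Lemma \ref{lem:repNEW2} such a monomial does not lie in $\rC^*\RR\axs_1\rC$ (indeed $\lead{\iota_i}\in\RR\axs_1\rC\setminus\rC$ and $m\in\RR\axs\rC\setminus\rC$, so $m^*\lead{\iota_i}\in\rC^*\RR\axs\rC\setminus\rC^*\RR\axs_1\rC$), a contradiction. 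Hence all $p_{ij},q_{ij},r_{ij}=0$.

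\textbf{Main obstacle.} I expect the hard part to be the existence half of part (1): namely, carrying out the iterated re-expansion of products like $c^*(p_i\iota_i)$ so that \emph{every} resulting term is honestly one of the five listed types, with the coefficient constraints ($s_i,t_i\in\rC$, $\alpha_j,\beta_j\in\RR^{1\times\ell}\cap\rC$, $p_{ij},q_{ij},r_{ij}\in\RR\axs$) exactly met. The subtlety is that a product $c^*\iota_i$ with $c\in\rC$ and $\iota_i\in\RR\axs_1\rC$ need not be ``triangular'' in an obvious way — one has to track carefully, using Lemma \ref{lem:repRRaxsrC} and Lemma \ref{lem:repNEW2}, when the left factor $c$ absorbs a letter from $\iota_i$ (pushing it below $\RR\axs_1\rC$, into $\rC$, contributing an $s_i$ or $t_i$ term) versus when it stays put. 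Managing this cancellation/absorption combinatorics cleanly — rather than in a morass of word surgery — is the crux; the uniqueness and parts (2)–(3) are then comparatively mechanical consequences of the leading-monomial calculus from Lemmas \ref{lem:leadOfProd}, \ref{lem:leadCOrder2}, and \ref{lem:repNEW2}.
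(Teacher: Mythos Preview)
Your proposal is correct and follows essentially the same route as the paper. The paper likewise begins with Lemma \ref{lem:noTheta} to reduce to $\rC^*I + I^*\rC$, then decomposes each left factor $a_i\in\RR\axs\rC$ by iterated leading-monomial reduction (using Lemma \ref{lem:repRRaxsrC}) into a $\sum\tilde a_{ij}\iota_j + \sum q_{ij}\tau_j + s_i$ shape, and handles parts (2), (3), and uniqueness exactly via the leading-monomial calculus from Lemmas \ref{lem:leadOfProd}, \ref{lem:leadCOrder2}, and \ref{lem:repNEW2} that you name. The only step you leave slightly implicit that the paper spells out is the reduction of the $\vartheta_j$ coefficients to constants: the paper observes that for $|w|\geq 1$ one has $(e_i\otimes w_1w_2)^*\vartheta_j = (e_i\otimes w_2)^*(w_1^*\vartheta_j)$ with $w_1^*\vartheta_j\in I\cap\RR\axs_1\rC$, so those terms get reabsorbed into the $a_i^*\iota_i$ part, leaving $\alpha_j,\beta_j\in\RR^{1\times\ell}\cap\rC$.
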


\begin{proof}
 By Lemma \ref{lem:noTheta},
$(\RR^{\ell \times 1} I + I^* \RR^{1 \times \ell}) \cap \rC^*\RR\axs \rC =
\rC^*I + I^*\rC$, and consists of all polynomials of the form
\begin{equation}
 \label{eq:star}
\sum_{i=1}^{\mu} (a_{i}^* \iota_i +\iota_i^*b_i)
+ \sum_{j=1}^{\sigma} (\alpha_j^*\vartheta_j + \vartheta_j^*\beta_j),
\end{equation}
where $a_i, b_i, \alpha_j, \beta_j \in \RR\axs \rC$.
If $i \in \Gamma(\rC)$ and $w \in \axs$, either $|w| = 0$
or $w = w_1w_2$, where $w_1, w_2 \in \axs$ with $|w_1| = 1$.
In the second case, for each $\vartheta_j$,
we see that $(e_i \otimes
w_1w_2)^*\vartheta_j = (e_i \otimes w_2)^*(w_1^*\vartheta_j)$.
Since $w_1^*\vartheta_j \in I \cap \RR\axs_1 \rC$, then
$w_1^*\vartheta_j$ is in the span of the $\rC$-basis.
Reducing in this way, we can take the $\alpha_i$ and $\beta_i$ in
(\ref{eq:repOfIIstar}) to be elements of $(\RR^{1 \times \ell} \otimes 1) \cap
\rC$.

Next, consider
\[\sum_{i=1}^{\mu} (a_{i}^* \iota_i +\iota_i^*b_i).\]
If $\lead{a_i} \in \RR\axs \rC
\setminus \rC$, then by Lemma \ref{lem:repRRaxsrC}, it can be decomposed as
$\lead{a_i} = w m$, where $w \in \axs$ and $m \in \RR\axs_1 \setminus \rC$.
Let $A \in \RR$ be the coefficient of $\lead{a_i}$ in $a$.
By construction, $m$ is either equal to some $\lead{\iota_j}$ or some $\tau_j$.
In the first case, we see that $a_i = (a_i - A w\iota_j) + w\iota_j$, so that
$a_i - A w\iota_j$ has a smaller leading monomial than $a_i$.
In the latter case, $a_i = (a_i - w\tau_j) + w\tau_j$, so that
$a_i - w\tau_j$ has a smaller leading monomial than $a_i$.
Since $\rC$ is finite dimensional, we can continue this until we decompose each
$a_i$,
and likewise each $b_i$, as
\[ a_i = \sum_{j=1}^{\mu} \tilde{a}_{ij} \iota_j + \sum_{j=1}^{\sigma}
q_{ij}\tau_j + s_i \quad
\mbox{and}
\quad
b_i = \sum_{j=1}^{\mu} \tilde{b}_{ij} \iota_j + \sum_{j=1}^{\sigma}
r_{ij}^* \tau_j + t_i,
\]
where $\tilde{a}_{ij}, q_{ij}, \tilde{b}_{ij}, r_{ij} \in \RR\axs$ and $s_i,
t_i \in \rC$.
Therefore
\begin{align}
\notag
\sum_{i=1}^{\mu} (a_{i}^* \iota_i +\iota_i^*b_i)
&=
\sum_{i=1}^{\mu}\sum_{j=1}^{\mu} \iota_i^* (\tilde{a}_{ij}^* + \tilde{b}_{ij})
\iota_j
+ \sum_{i=1}^{\omega}\sum_{j=1}^{\mu} \tau_i^* q_{ij}^* \iota_j\\
\notag
&+ \sum_{i=1}^{\mu}\sum_{j=1}^{\omega} \iota_i^* r_{ij}^*  \tau_j
+ \sum_{i=1}^{\mu} (s_i^*\iota_i + \iota_i^*t_i).
\end{align}
Setting $p_{ij} = \tilde{a}_{ij} + \tilde{b}_{ij}^*$ gives
(\ref{eq:repOfIIstar}).

Next, consider the leading monomial of (\ref{eq:repOfIIstar}).
If $p_{ij} \neq 0$ for some $ij$, then by Lemmas \ref{lem:leadOfProd} and
\ref{lem:leadCOrder2},
the leading monomial of $\iota_i^*p_{ij}^*\iota_j$ is
\[\lead{\iota_i^*p_{ij}^*\iota_j} = \lead{p_{ij}\iota_i}^*\lead{\iota_j}
= \lead{\iota_i}^*\lead{p_{ij}}^*\lead{\iota_j}.\]
Similarly, the leading monomials of each nonzero $\tau_i^*q_{ij}^*\iota_j$ and
$\iota_i^*r_{ij}^*\tau_j$ are
\[\lead{\tau_i^*q_{ij}^*\iota_j}
= \tau_i^*\lead{q_{ij}}^*\lead{\iota_j}
\]
and
\[
\lead{\iota_i^*r_{ij}^*\tau_j}
= \lead{\iota_i}^*\lead{r_{ij}}^*\tau_j.
\]
All of these possible leading monomials are distinct by Lemma \ref{lem:repNEW2}.
The last line in (\ref{eq:repOfIIstar}) is in $\rC^* \RR\axs_1 \rC$, so if 
(\ref{eq:repOfIIstar}) is not in $\rC^* \RR\axs_1 \rC$, then its leading 
monomial must be of the form $m^*\lead{\iota_i}$ or $\lead{\iota_i}^*m$ for 
some $\iota_i$.  On the other hand, if 
the last line in (\ref{eq:repOfIIstar}) is in $\rC^* \RR\axs_1 \rC$, then we see 
that each $p_{ab} = q_{cd} = r_{ef} = 0$ since otherwise its leading monomial 
would lie outside of $\rC^*\RR\axs_1 \rC$.
By linearity, this implies that the representation in (\ref{eq:repOfIIstar}) 
must be unique.
\end{proof}

\section{Positive Linear Functionals on \texorpdfstring{$\RR^{\ell \times
\ell}\axs$}{[R ell times ell x xs]}}
\label{sect:linFun}

The main theorem of this paper, Theorem \ref{thm:lnss}, will be proved at the
end of this section.  We now discuss linear functionals and prove some
important lemmas which will be used in the proof of the main result.

A ($\RR$-)linear functional $L$ on $W \subset \RR^{\ell \times \ell}\axs$ is
\textbf{symmetric}\index{linear functional on $\RR^{\ell \times
\ell}$!symmetric}
if $L(\omega^{\ast}) = L(\omega)$ for each pair $\omega, \omega^*
\in W$.
A linear functional $L$ on a subspace $W \subset \RR^{\ell \times
\ell}\axs$ is
\textbf{positive}\index{linear functional on $\RR^{\ell \times
\ell}\axs$!positive} if it is symmetric and
if $L(w^{\ast}w) \geq 0$ for each  $w^*w \in W$.

\begin{lemma}
\label{prop:posLinFunHasIdeal}
 If $L$ is a positive linear functional on $\RR^{\ell \times
\ell}\axs$, then
$I := \{\vartheta \in \RR^{1 \times \ell}\axs \mid
L(\vartheta^{\ast}\vartheta) = 0 \}$
is a real left module and $L(\RR^{\ell \times 1} I + I^{\ast}\RR^{1 \times
\ell}) =
\{0\}$.
\end{lemma}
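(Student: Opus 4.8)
The plan is to pass to the real bilinear pairing $\langle p,q\rangle := L(p^{\ast}q)$ for $p,q\in\RR^{1\times\ell}\axs$, which is well defined since $p^{\ast}q\in\RR^{\ell\times\ell}\axs$. Symmetry of $L$ gives $\langle q,p\rangle = L(q^{\ast}p)=L((p^{\ast}q)^{\ast})=L(p^{\ast}q)=\langle p,q\rangle$, and positivity of $L$ gives $\langle p,p\rangle = L(p^{\ast}p)\ge 0$, so $\langle\cdot,\cdot\rangle$ is a positive semidefinite symmetric bilinear form on $\RR^{1\times\ell}\axs$. Expanding $0\le\langle p+tq,p+tq\rangle$ as a quadratic in $t\in\RR$ (the usual Cauchy--Schwarz argument, which needs only semidefiniteness) shows that $\langle p,p\rangle=0$ forces $\langle p,q\rangle=0$ for every $q$. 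Hence $I=\{p:\langle p,p\rangle=0\}$ coincides with the radical $\{p:\langle p,\cdot\rangle\equiv 0\}$ of the form; in particular $I$ is a linear subspace of $\RR^{1\times\ell}\axs$.

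Next I would check that $I$ is a left module: for $q\in\RR\axs$ and $p\in I$, write $(qp)^{\ast}(qp)=p^{\ast}(q^{\ast}q)p=p^{\ast}r$ with $r:=(q^{\ast}q)p\in\RR^{1\times\ell}\axs$; then $L((qp)^{\ast}(qp))=\langle p,r\rangle=0$ because $p$ lies in the radical, so $qp\in I$. For the vanishing claim, since $I^{\ast}\RR^{1\times\ell}=(\RR^{\ell\times 1}I)^{\ast}$ and $L$ is symmetric, it suffices to show $L$ vanishes on $\RR^{\ell\times 1}I$; a spanning element of the latter has the form $\delta^{\ast}\iota$ with $\delta\in\RR^{1\times\ell}$ a constant row and $\iota\in I$, and $L(\delta^{\ast}\iota)=\langle\delta,\iota\rangle=0$ since $\iota$ is in the radical. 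Then $L(\gamma^{\ast})=L(\gamma)=0$ for each $\gamma\in\RR^{\ell\times 1}I$ handles the other summand, so $L(\RR^{\ell\times 1}I+I^{\ast}\RR^{1\times\ell})=\{0\}$.

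Finally, reality of $I$ is immediate from the previous step: if $\sum_i^{\finite}p_i^{\ast}p_i\in\RR^{\ell\times 1}I+I^{\ast}\RR^{1\times\ell}$ with $p_i\in\RR^{1\times\ell}\axs$, applying $L$ gives $\sum_i L(p_i^{\ast}p_i)=0$, and since each term is $\ge 0$ by positivity of $L$, each $L(p_i^{\ast}p_i)=0$, i.e.\ each $p_i\in I$. I do not anticipate a genuine obstacle: the only points needing care are the bookkeeping of matrix shapes (so that $p^{\ast}q$, $(q^{\ast}q)p$, and $\delta^{\ast}\iota$ all land in the spaces claimed) and the observation that a positive \emph{semi}definite symmetric bilinear form has its isotropic cone equal to its radical, which is the standard quadratic-in-$t$ argument; I expect the shape bookkeeping to be the most error-prone part rather than anything genuinely difficult.
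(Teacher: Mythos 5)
Your proof is correct and is essentially the same as the paper's: both hinge on the quadratic-in-$t$ argument showing that positivity of $L$ forces $L(\vartheta^{\ast}\vartheta)=0$ to imply $L$ pairs $\vartheta$ to zero against everything, and both use the same factorization $(qp)^{\ast}(qp)=p^{\ast}(q^{\ast}q)p$ for module closure and the same one-line application of $L$ for reality. The only difference is presentational --- the paper repeats the quadratic expansion inline for each claim, whereas you package it once up front by identifying $I$ with the radical of the bilinear form $\langle p,q\rangle = L(p^{\ast}q)$ and deducing the rest from that --- but the underlying ideas are identical.
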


\begin{proof}
The space $\RR^{\ell \times 1} I$ is spanned by polynomials of the form
$b^{\ast}a$,
where $a \in I$ and $b \in \RR^{1 \times \ell}$.  For each $\xi \in
\RR$, by positivity of $L$,
\[ L([a + \xi b]^{\ast}[a + \xi b]) = 2 \xi L(b^{\ast}a)
+
\xi^2 L(b^{\ast}b)\geq 0,\]
which implies that $L(b^{\ast}a) = 0$. Therefore,
$L(\RR^{\ell
\times 1} I
+
I^{\ast}\RR^{1 \times \ell}) =
\{0\}$.
Next, if $q \in \RR\axs$, then for
each $\xi \in \RR$,
\[ L([a + \xi q^{\ast}qa]^{\ast}[a + \xi q^{\ast}qa]) = 2 \xi
L(a^{\ast}q^{\ast}qa) + \xi^2 L(a^{\ast}q^{\ast}qq^{\ast}qa)
\geq 0,\]
which implies that $L(a^{\ast}q^{\ast}qa) = 0$.
Therefore $qa \in I$.
Further, if $c \in I$, then for each $\xi \in \RR$,
\[ L([a + \xi c]^{\ast}[a + \xi c]) = 2\xi L(a^{\ast}c)
 \geq 0, \]
which implies that $L(a^{\ast}c)= 0$. Therefore, $a + c
\in I$, which implies that $I$ is a left module.
Finally suppose $\sum_{i}^{\finite} p_i^{\ast}p_i \in \RR^{\ell \times 1} I +
I^{\ast}\RR^{1 \times \ell}$.
Then
\[L\left( \sum_{i}^{\finite} p_i^{\ast}p_i\right) = \sum_i^{\finite}
L(p_i^{\ast}p_i)= 0,\]
which implies that each $L(p_i^{\ast}p_i) = 0$.  Therefore
each
$p_i
\in I$, which implies that $I$ is real.
\end{proof}

Let $W \subset \RR^{\ell \times \ell}\axs$ be a vector subspace and
let $L$ be a positive linear functional on $W$.
Suppose
\[ \{\omega \in \RR^{1 \times \ell}\axs \mid \omega^*\omega \in W \} = J \oplus
T \]
where $J, T \subset \RR^{1 \times \ell}\axs$ are
vector
subspaces with
\[J := \{\vartheta \in \RR^{1 \times \ell}\axs \mid \vartheta^*\vartheta \in W
\mbox{ and } L(\vartheta^*\vartheta) = 0\}.\]
 An extension $\bar{L}$ of
$L$ to a space $U \supset W$ is a \textbf{flat
extension}\index{flat extension} if $\bar{L}$ is positive and if
\[ \{u \in \RR^{1 \times \ell}\axs \mid u^*u \in U \} = I \oplus
T \]
where
\[ I = \{ \iota \in \RR^{1 \times \ell}  \mid \iota^*\iota \in U \mbox{ and }
\bar{L}(\iota^*\iota) = 0\}.\]

\begin{prop}
\label{prop:flatExtRC}
Let $\rC \subset \RR^{1 \times \ell}\axs$
be a finite right chip space.
Let $L$ be a positive linear functional on
$\rC^*\RR\axs_1 \rC$.
\begin{enumerate}
 \item There exists a positive extension of
$L$ to the space  $\rC^*\RR\axs_2 \rC$ if and only if
whenever
 $\vartheta \in \rC$ satisfies
$L(\vartheta^*\vartheta) = 0$, then $L(b^*c\vartheta) =
0$ for each polynomial $b \in \RR\axs_1 \rC$ and each $c \in \RR\axs$
such that $c\vartheta \in \rC$.
\item If there exists a positive extension of
$L$ to the space
$\rC^*\RR\axs_2 \rC$, then there exists a unique flat
extension
$\bar{L}$  of $L$
to
$\rC^*\RR\axs \rC$.
In this case, the space
\[\{\theta \in \RR\axs\rC \mid
\bar{L}(\theta^*\theta) = 0\}\]
is generated as a left module by the set
\begin{equation}
\label{eq:genOfJRC}
  \{ \iota \in \RR\axs_1 \rC \mid L(b^*\iota) = 0
\text{ for every } b \in \rC\}.
\end{equation}
\item Given the existence of a flat extension $\bar{L}$ to
$\rC^*\RR\axs\rC$, there exists a flat extension
of $\bar{L}$ to all of $\RR^{\ell \times \ell}\axs$.
\end{enumerate}

\end{prop}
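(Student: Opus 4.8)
The plan is to read this as a module-theoretic flat‑extension theorem and reduce it to positive semidefinite completion of partial Gram matrices. Two preliminaries make this possible. First, one checks (the top $\rC^*(\cdot)\rC$-graded piece of $\omega^*\omega$ cannot cancel, using Lemmas \ref{lem:uniqueDecomp} and \ref{lem:repRRaxsrC}) that $\{\omega\in\RR^{1\times\ell}\axs : \omega^*\omega\in\rC^*\RR\axs_1\rC\}=\rC$, $\{\omega : \omega^*\omega\in\rC^*\RR\axs_2\rC\}=\RR\axs_1\rC$, and $\{\omega : \omega^*\omega\in\rC^*\RR\axs\rC\}=\RR\axs\rC$. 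Second, by the unique‑factorization Lemmas \ref{lem:canFactorMon} and \ref{lem:repNEW2}, every monomial of $\rC^*\RR\axs_2\rC\setminus\rC^*\RR\axs_1\rC$ is $\bar m_1^*\bar m_2$ for a unique pair $\bar m_1,\bar m_2\in\RR\axs_1\rC\setminus\rC$, so the multiplication map $\RR\axs_1\rC\otimes\RR\axs_1\rC\to\rC^*\RR\axs_2\rC$ has no relations beyond those inside $\rC^*\RR\axs_1\rC$ and the symmetry $\bar m_1^*\bar m_2\leftrightarrow(\bar m_1^*\bar m_2)^*$. Hence positive extensions of $L$ from $\rC^*\RR\axs_1\rC$ to $\rC^*\RR\axs_2\rC$ correspond exactly to positive semidefinite completions of the partial Gram form $B(u,v):=L(u^*v)$ on $\RR\axs_1\rC=\rC\oplus D$ ($D$ spanned by monomials of $\RR\axs_1\rC\setminus\rC$) whose $\rC\times\RR\axs_1\rC$ block is the given data. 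Write $N:=\{\psi\in\rC:L(\psi^*\psi)=0\}$, the radical of $B|_{\rC\times\rC}$, and fix $\rC=N\oplus T$.

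For (1) I would first show the displayed condition is equivalent to
\[
(\star)\qquad \psi\in\rC,\ L(\psi^*\psi)=0\ \Longrightarrow\ L(b^*\psi)=0\ \text{ for all }b\in\RR\axs_1\rC,
\]
i.e. that $N$ is $L$-orthogonal to $\RR\axs_1\rC$. One direction is the case $c=1$; the converse is an induction peeling letters off $c=y_k\cdots y_1$: each $y_j\cdots y_1\vartheta$ lies in $\rC$ by right‑chip closure, and $L\big((y_{j+1}\vartheta_j)^*(y_{j+1}\vartheta_j)\big)=L\big((y_{j+1}^*y_{j+1}\vartheta_j)^*\vartheta_j\big)=0$ by $(\star)$ applied to $\vartheta_j$, after which $(\star)$ applied to $c\vartheta$ finishes. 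If a positive extension $\tilde L$ to $\rC^*\RR\axs_2\rC$ exists, $(\star)$ follows from Cauchy–Schwarz for $\tilde L$ (the completing‑the‑square step of Proposition \ref{prop:posLinFunHasIdeal}): $b^*b$ and $b^*\psi$ lie in $\rC^*\RR\axs_2\rC$, so $|\tilde L(b^*\psi)|^2\le \tilde L(b^*b)\tilde L(\psi^*\psi)=0$. Conversely $(\star)$ says precisely that $\operatorname{range}(B_{\rC D})\subseteq\operatorname{range}(B_{\rC\rC})=N^{\perp}$, the classical feasibility criterion for a positive semidefinite completion; the Schur‑complement (minimal) completion $B_{DD}:=B_{D\rC}B_{\rC\rC}^{+}B_{\rC D}$, pulled back along the relation‑free multiplication map, gives the desired positive extension.

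For (2) the crucial extra point is that this minimal completion is \emph{flat}: a Schur‑complement completion has $\operatorname{rank}B=\operatorname{rank}B_{\rC\rC}=\dim T$, and $T\cap\{Bv=0\}=T\cap N=\{0\}$, so $\RR\axs_1\rC=J_2\oplus T$ with $J_2:=\{\iota\in\RR\axs_1\rC:\tilde L(\iota^*\iota)=0\}$; a short count using $(\star)$ shows $J_2$ equals the set (\ref{eq:genOfJRC}) and that $\rC/N\to\RR\axs_1\rC/J_2=:\tilde H$ is an isomorphism, so $\tilde H$ is finite dimensional. On $\tilde H$ define $M_x[\psi]:=[x\psi]$ for each of the $2g$ letters $x$ and $\psi\in\rC$; this is well defined because $(\star)$ forces $xN\subseteq J_2$ (the relevant column of $B_{\rC D}$ vanishes), and $M_{x_i^*}=M_{x_i}^{*}$ by symmetry of $\tilde L$. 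These assemble into a finite‑dimensional $\ast$-representation $q\mapsto M_q$ of $\RR\axs$ on $\tilde H$, and I would \emph{define} $\bar L$ on $\rC^*\RR\axs\rC=\bigoplus_{i,j\in\Gamma(\rC)}E_{ij}\otimes\RR\axs$ (Lemma \ref{lemma:MsFFaxsM}) by $\bar L(E_{ij}\otimes q):=\langle M_q[e_j\otimes 1],\,[e_i\otimes 1]\rangle$. Using $M_{u^*pv}=M_u^{*}M_pM_v$ and $M_v[e_j\otimes1]=[e_j\otimes v]$ one gets $\bar L(a^*pb)=\langle M_p[b],[a]\rangle$ for all $a,b\in\rC$, whence $\bar L$ is symmetric, agrees with $L$ on $\rC^*\RR\axs_1\rC$, and $\bar L(\theta^*\theta)=\|\Phi(\theta)\|^2\ge0$ where $\Phi\big(\sum_{j\in\Gamma(\rC)}e_j\otimes q_j\big):=\sum_j M_{q_j}[e_j\otimes 1]$ (Lemma \ref{lem:gammaC}); thus $\{\theta:\bar L(\theta^*\theta)=0\}=\ker\Phi=:J$. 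Since $\Phi$ is onto $\tilde H$ with $\Phi|_T=\mathrm{id}$, we get $\RR\axs\rC=J\oplus T$ — flatness — and $\RR\axs\rC/J\cong\tilde H$; comparing with $\RR\axs\rC/\RR\axs J_2$, which also surjects onto $\tilde H$ and, by induction on word length using $\RR\axs_1\rC=J_2\oplus T$, has dimension $\le\dim T$, forces $J=\RR\axs J_2=\RR\axs\cdot(\ref{eq:genOfJRC})$, the asserted generation. For uniqueness, any flat extension restricts to a flat extension at level $2$, hence to the Schur‑complement completion (the unique rank‑preserving positive completion), hence induces the same $\tilde H$ and operators $M_x$, hence equals $\bar L$ by the formula.

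For (3) I would extend $\bar L$ by zero along the complementary summands of the decomposition $\RR^{\ell\times\ell}\axs=\rC^*\RR\axs\rC\oplus\Theta^*\rC\oplus\rC^*\Theta\oplus\Theta^*\Theta$ from the proof of Lemma \ref{lem:noTheta}, where $\Theta:=\bigoplus_{j\notin\Gamma(\rC)}e_j\otimes\RR\axs$. This is symmetric since $\ast$ fixes the first and last summands and swaps the middle two; positive since for $w=\phi+\theta$ with $\phi\in\RR\axs\rC$, $\theta\in\Theta$ the cross terms lie in $\rC^*\Theta\oplus\Theta^*\rC$ and $\theta^*\theta$ in $\Theta^*\Theta$, so $\widehat L(w^*w)=\bar L(\phi^*\phi)\ge0$; and flat since the new radical is $J\oplus\Theta$ and $(J\oplus\Theta)\oplus T=(J\oplus T)\oplus\Theta=\RR\axs\rC\oplus\Theta=\RR^{1\times\ell}\axs$. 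I expect the main obstacle to be part (2): showing the single flat step at level $2$ propagates — that the finite‑dimensional datum $(\tilde H,M_x)$ really yields a consistent functional on all of $\rC^*\RR\axs\rC$ and that no new kernel relations appear in higher degree ($J=\RR\axs J_2$). Everything there rests on the soft but essential consequence of $(\star)$ that $N$ is invisible both to $B$ and to multiplication by the letters.
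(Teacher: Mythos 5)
Your proof is correct, but it takes a genuinely different route from the paper's. The paper proves the ``condition $\Rightarrow$ extension'' direction by \emph{directly building} the flat extension: fixing $\rC = J \oplus T$, choosing an orthonormal basis $\tau_1,\dots,\tau_\mu$ of $T$ for $\langle a,b\rangle = L(b^*a)$, defining $p_m := m - \sum_j L(\tau_j^*m)\tau_j$ for each monomial $m\in\RR\axs_1\rC\setminus\rC$, showing $\RR\axs\rC = \RR\axs(\operatorname{Span}\{p_m\})\oplus\rC$, and then defining $\bar L$ by $\bar L(b^*a) := \langle[a],[b]\rangle$ on the quotient $\RR\axs\rC/I$; the finite-dimensional GNS triple is extracted only afterwards (Proposition \ref{prop:GNS}, Corollary \ref{cor:GNS}). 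You reverse this order: you reformulate the hypothesis as range$(B_{\rC D})\subseteq$ range$(B_{\rC\rC})$ for the partial Gram form, invoke the Schur-complement completion to get a flat level-$2$ functional, then extract a finite-dimensional $\ast$-representation $(\tilde H, M_x)$ directly from the level-$2$ data, and finally \emph{define} $\bar L(E_{ij}\otimes q) := \langle M_q[e_j\otimes 1],[e_i\otimes 1]\rangle$. What your route buys is a cleaner conceptual picture (it sits squarely in the Curto--Fialkow flat-extension tradition) and a very clean uniqueness argument via uniqueness of the rank-preserving PSD completion; what the paper's route buys is that it never needs to verify multiplicativity/well-definedness of the operators $M_x$ (everything is built directly as a functional on $\rC^*\RR\axs\rC$) and it produces the generating set for the kernel explicitly via the $p_m$'s rather than by a dimension count. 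Two small points you glide over that do work but deserve to be flagged: (i) you need the observation that for $\bar m_1,\bar m_2\in\RR\axs_1\rC\setminus\rC$ the product $\bar m_1^*\bar m_2$ lies \emph{outside} $\rC^*\RR\axs_1\rC$ (this is the degenerate case of the counting argument in the proof of Lemma \ref{lem:repNEW2}), otherwise the $D\times D$ block would not actually be free; and (ii) the step where you define $M_q$ for a word $q$ by iterating $M_x$ requires exactly the consistency you note ($xN\subseteq J_2$ from $(\star)$), since each intermediate class in $\tilde H$ must be pulled back to a representative in $\rC$ before the next letter is applied, and the answer must be independent of that choice.
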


\begin{proof}
First, suppose there exists a positive extension $\tilde{L}$ of
$L$ to the space
$\rC^*\RR\axs_2\rC$.
Let $\vartheta \in
\rC$ satisfy $L(\vartheta^*\vartheta)
= 0$.
If $\tilde{c} \vartheta \in \rC$, with $\tilde{c} = \RR\axs_1$ then
$\tilde{c}^*\tilde{c}\vartheta \in \RR\axs_1
\rC$.  For each
$\xi \in \RR$,
since $\tilde{L}$ is positive,
\[ \tilde{L}([\vartheta + \xi \tilde{c}^*\tilde{c}\vartheta]^*[\vartheta + \xi
\tilde{c}^*\tilde{c}\vartheta])
= 2\xi L (\vartheta^*\tilde{c}^*\tilde{c}\vartheta) + \xi^2
\tilde{L}(\vartheta^*\tilde{c}^*\tilde{c}\tilde{c}^*\tilde{c}\vartheta) \geq
0,\]
which implies that $L([\tilde{c}\vartheta]^*[\tilde{c}\vartheta]) = 0$.
We can then extend this to show that if $c \vartheta \in \rC$ for any $c \in
\RR\axs$ then $L([c\vartheta]^*[c\vartheta]) = 0$.
 Further, if $b \in \RR\axs_1 \rC$, then for each $\xi \in \RR$,
since $\tilde{L}$ is positive,
\[ \tilde{L}([c\vartheta + \xi b]^*[c\vartheta + \xi b])
= 2\xi L (b^*c\vartheta) + \xi^2 \tilde{L}(b^*b) \geq 0,\]
which implies that $L(b^*c\vartheta) = 0$.

Conversely,
let $\rC$ be decomposed as $J \oplus T$, where
\[J = \{ \vartheta \in \rC \mid L(\vartheta^*\vartheta) = 0\},\]
and $T \subset \rC$ is some complementary subspace,
and suppose
$L(b^*c\vartheta) = 0$ for each $\vartheta \in J$, each $b \in \RR\axs_1
\rC$, and each $c \in \RR\axs$
such that $c\vartheta \in \rC$.

Define an inner product on $T$ by $\langle a, b \rangle = L(b^*a)$.
This inner product is well defined since $L$ is symmetric and is positive
on squares of $T$.
 Let $\tau_1, \ldots, \tau_{\mu}$ be an orthonormal basis for $T$ according to
this inner product.
If $m \in
\RR\axs_1 \rC \setminus \rC$ is a monomial, then
consider the polynomial $p_m$ defined by
\begin{equation}
 \label{eq:pwgiRC}
p_{m}:= m - \sum_{j=1}^{\mu} L(\tau_j^*m) \tau_j.
\end{equation}
If $b \in \rC$, then $b = \vartheta + \sum_{k=1}^{\mu} \beta_k \tau_k$ for
some
$\vartheta \in J$ and $\beta_k \in \RR$.  We see that
\begin{align}
\notag
 L( b^* p_{m}) &= L(\vartheta^*p_m)
 +
\sum_{k=1}^{\mu}\beta_k L(\tau_k^*m)
 - \sum_{j=1}^{\mu}
\sum_{k=1}^{\mu} \beta_k L(\tau_j^*m)L(\tau_k^*\tau_j) = 0.
\end{align}
Therefore $p_{m}$ is in the set (\ref{eq:genOfJRC}).
Also note that a
flat extension $\bar{L}$ of $L$
must satisfy $\bar{L}(p_{m}^*p_{m}) = 0$ since the equation
\[ \bar{L}\left(\left[p_{m} + \sum_{j=1}^{\mu} \gamma_j
\tau_j\right]^*
\left[p_{m} + \sum_{j=1}^{\mu} \gamma_j
\tau_j\right]\right) =
\sum_{j=1}^{\mu} \gamma_j^2 L(\tau_j^*\tau_j) +
\bar{L}(p_{m}^*p_{m}) = 0\]
has only one solution in $\gamma$ and $\bar{L}(p_m^*p_m)$: each
$\gamma_i = 0$ and $\bar{L}(p_{m}^*p_m) = 0$.

Next, consider $w\psi \in \RR\axs_d \rC$, where $w = w_1w_2$, with
$|w_2|
=
1$, and $\psi \in \rC$.  We see that either $w_2 \psi \in \rC$,
or $w_2 \psi \in \RR\axs_1 \rC \setminus \rC$.  In either case, $w_2 \psi =
\iota + \zeta$,
where $\iota \in \operatorname{Span}(\{p_m \mid m \in \RR\axs \rC
\setminus \rC \})$ and $\zeta \in \rC$.  Therefore
\[ w \psi = w_1 \iota + w_1 \zeta,\]
so that $w_1\iota \in \RR\axs \left(\operatorname{Span}[\{p_m \}]
\right)$ and $w_1\zeta \in \RR\axs_{d-1} \rC$.
By induction, this implies that each element of $\RR\axs\rC$ is in $\RR\axs
(\operatorname{Span}[\{p_m \}]) + \rC$.
 Lemma \ref{lem:strLinInd} further implies that
\[  \RR\axs\rC = \RR\axs (\operatorname{Span}[\{p_m \}]) \oplus \rC. \]

Let $I$ be the left module generated by $(\RR\axs \operatorname{Span}[\{p_m \}])
\oplus J$.
Let $\iota$ be in the set (\ref{eq:genOfJRC}).  Since $\iota \in \RR\axs_1 \rC$
we can decompose it as
\[\iota = \sum_{m \in \RR\axs \rC \setminus \rC} \alpha_m p_m + \vartheta,\]
where each $\alpha_m \in \RR$ and $\vartheta \in \rC$.
We see that
$L(\vartheta^*\vartheta) = L(\vartheta^*\iota) = 0$,
which implies that $\vartheta \in J$.  Therefore,
 $\iota \in I$, which implies that
$I$ contains the set (\ref{eq:genOfJRC}).

Conversely, let $\vartheta \in J$.  By assumption $\vartheta$ is in the set
(\ref{eq:genOfJRC}).
Further, if $c \in \axs$ has length $1$, then $c\vartheta \in \RR\axs_1 \rC$.
By assumption, we must have $c\vartheta$ in (\ref{eq:genOfJRC}).  Therefore
the generators of $I$ are in (\ref{eq:genOfJRC}), which implies that $I$ is the
left module
generated by (\ref{eq:genOfJRC}).
Further, since $c\vartheta$ is in the set
(\ref{eq:genOfJRC}),
$c\vartheta$ must be in the span of the $p_m$ and $J$.  Therefore
\[I = \RR\axs \operatorname{Span}(\{p_m \}) \oplus J
\quad
\mbox{ and }
\quad
\RR \axs \rC = I \oplus T.\]
Define an inner product linearly
on $\RR\axs
\rC / I$ to be
\[\langle [\tau_i], [\tau_j] \rangle := \langle
\tau_i, \tau_j \rangle = L(\tau_i^*\tau_j),\]
where $\tau_1, \ldots, \tau_{\mu}$ are the orthonormal basis elements of $T$
previously
defined.
Since $I \cap T = \{0\}$, this inner product is well defined.

Let $\bar{L}$ be a linear functional on $\rC^*\RR\axs
\rC$ defined by
\[\bar{L}(b^*a) := \langle [a], [b] \rangle.\]
Clearly $\bar{L}$ is positive and symmetric. Further, given an
element $b^*pa \in
\rC^*\RR\axs_1 \rC$, with $a,b \in \rC$ and $p \in
\RR\axs_1$,
decompose $pa$ as $pa = \iota_a + \tau_a$ and $b$ as $b = \vartheta_b + \tau_b$,
where
$\iota_a \in I$, $\vartheta_b \in J$, and $\tau_a,\tau_b \in T$.  Then
\[\bar{L}(b^*pa) = \langle [pa], [b]\rangle =
\langle [\tau_a], [\tau_b] \rangle = L(\tau_b^*\tau_a) =
L(b^*a),\]
since $L(\tau_b^*\iota_a) = L(\vartheta_b^*\tau_a) =
L(\iota_b^*\vartheta_a) =0$.  Therefore
$\bar{L}$ is an extension of $L$.
Further, $\bar{L}$ is a flat extension since $\RR\axs \rC =
I \oplus T$, and clearly
$I = \{ \theta \in \RR\axs \rC \mid \bar{L}(\theta^*\theta)
= 0\}$.  Finally, as mentioned, any flat extension of $L$ must
satisfy $\bar{L}(p_m^*p_m) = 0$ for each monomial $m \in \RR\axs_1 \rC
\setminus
\rC$,
so by Lemma
\ref{prop:posLinFunHasIdeal}, we must have $\bar{L}(\RR^{\ell \times
1}I + I^*\RR^{1 \times \ell}) = \{0\}$.
Therefore, $\bar{L}$ is unique since
$\RR\axs \rC = I \oplus T$, and the value of $\bar{L}$ on each of
$T^*T$, $(I \oplus T)^*I$ and $I^*T$ is uniquely determined.

Finally, we extend $\bar{L}$ to a flat extension
on all of $\RR^{\ell \times \ell}\axs$ as follows.
Lemma \ref{lemma:MsFFaxsM} implies that  $\rC^* \RR\axs \rC$ is equal to
\[ \rC^* \RR\axs \rC =  \bigoplus_{i,j \in \Gamma(\rC)} E_{i,j}
\otimes \RR\axs.\]
Extend $\bar{L}$
to be $0$ on the set
\[ \bigoplus_{(k_1,k_2) \not\in  \Gamma(\rC)^2 } E_{k_1k_2}
\otimes \RR\axs.\]
Clearly this is a flat extension of
$L$ to all of $\RR^{\ell \times \ell}\axs$.
\end{proof}

\subsection{The GNS Construction}

Proposition \ref{prop:GNS} below describes the well-known
 Gelfand-Naimark-Segal (GNS) construction.

\begin{prop}
\label{prop:GNS}
Let $L$ be a positive linear functional on
$\RR^{\ell \times \ell}\axs$,
and let $I = \{ \vartheta \in \RR^{1 \times \ell}\axs \mid
L(\vartheta^*\vartheta) =
0\}$.
There exists an inner product on the quotient space
$\cH := \RR^{1 \times \ell}\axs / I$,
a tuple of (possibly unbounded) operators $X$ on $\cH$, and a
vector $v \in \cH^n$ such that for each $p
\in \RR^{\ell \times \ell}\axs$ we have
\[\langle p(X)v, v \rangle = L(p).\]
 and $\cH = \{q(X)v \mid q \in\RR^{1 \times \ell}\axs\}$.
\end{prop}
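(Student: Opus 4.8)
The plan is to carry out the usual Gelfand--Naimark--Segal construction. From Lemma~\ref{prop:posLinFunHasIdeal} I would use two facts: that $I$ is a left module, and that $L$ annihilates $\RR^{\ell\times1}I+I^*\RR^{1\times\ell}$. These are precisely what is needed to push the pairing $(p,q)\mapsto L(q^*p)$ and left multiplication down to the quotient $\cH=\RR^{1\times\ell}\axs/I$.

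First I would equip $\cH$ with the bilinear form $\langle[p],[q]\rangle:=L(q^*p)$ for $p,q\in\RR^{1\times\ell}\axs$. This is well defined: if $p-p'\in I$ then $q^*(p-p')\in\RR^{\ell\times1}I$ (because $I$ is a left module), which $L$ kills by Lemma~\ref{prop:posLinFunHasIdeal}, and the $q$-slot is handled symmetrically using $I^*\RR^{1\times\ell}$. The form is symmetric since $L(q^*p)=L((q^*p)^*)=L(p^*q)$, and positive definite precisely because $I=\{\vartheta\mid L(\vartheta^*\vartheta)=0\}$. I would stress that we do \emph{not} complete $\cH$: the operators below will in general be unbounded for this form, so $\cH$ stays a (possibly incomplete) inner-product space on which every operator in question is everywhere defined.

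Next I would define $X_i$ as the left action of $x_i$, namely $X_i[p]:=[\leftact{x_i}{p}]$, well defined since $I$ is a left module, and $X_i^*$ as the left action of $x_i^*$. The identity $\langle X_i[p],[q]\rangle=L(q^*x_ip)=L((x_i^*q)^*p)=\langle[p],X_i^*[q]\rangle$ shows $X_i^*$ is the formal adjoint of $X_i$, so the evaluation $p(X)$ (replacing $x_i\mapsto X_i$ and $x_i^*\mapsto X_i^*$) is consistent, and more generally $r(X)[p]=[\leftact{r}{p}]$ for every $r\in\RR\axs$. Then I would take the cyclic vector $v:=\sum_{j=1}^{\ell}e_j^*\otimes[\,e_j\otimes1\,]\in\cH^{\ell}$ and check, for $q=\sum_j e_j\otimes q_j\in\RR^{1\times\ell}\axs$, that $q(X)v=\sum_j q_j(X)[\,e_j\otimes1\,]=\sum_j[\,e_j\otimes q_j\,]=[q]$. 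Since $[q]$ ranges over all of $\cH$, this simultaneously gives $\cH=\{q(X)v\mid q\in\RR^{1\times\ell}\axs\}$.

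Finally, to obtain $\langle p(X)v,v\rangle=L(p)$ for $p\in\RR^{\ell\times\ell}\axs$, I would write $p=\sum_{j,k}E_{jk}\otimes p_{jk}$ with $p_{jk}\in\RR\axs$; the Kronecker-product bookkeeping gives $p(X)v=\sum_{j,k}e_j^*\otimes p_{jk}(X)[\,e_k\otimes1\,]$, whence $\langle p(X)v,v\rangle=\sum_{j,k}\langle[\,e_k\otimes p_{jk}\,],[\,e_j\otimes1\,]\rangle=\sum_{j,k}L\bigl((e_j\otimes1)^*(e_k\otimes p_{jk})\bigr)=\sum_{j,k}L(E_{jk}\otimes p_{jk})=L(p)$. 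I do not expect a genuine obstacle: this is the classical GNS construction. The only points that need care are the well-definedness of the form and of the $X_i$ on the quotient --- which is exactly what Lemma~\ref{prop:posLinFunHasIdeal} supplies --- and keeping the matrix indices and Kronecker products straight in the last computation. The phrase ``possibly unbounded'' is merely a reminder that one works at the pre-Hilbert-space level and never passes to a completion.
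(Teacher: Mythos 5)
Your proposal is correct and follows essentially the same route as the paper's proof: the identical inner product $\langle[p],[q]\rangle=L(q^*p)$ on the quotient, well-definedness via Lemma~\ref{prop:posLinFunHasIdeal}, the operators $X_k$ acting by left multiplication with $X_k^*$ as formal adjoint, and the same cyclic vector built from the $[e_j\otimes1]$. The minor extras you add --- the explicit remark about not completing $\cH$ and the spelled-out positive-definiteness check --- are consistent with the paper's argument and do not change the approach.
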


\begin{proof}
Define an inner product on $\cH$ to
be
\[ \langle [p], [q] \rangle := L(q^{\ast}p).\]
This inner product is well defined since $L$ is positive,
and, by Lemma \ref{prop:posLinFunHasIdeal}, is $0$ on the space $\RR^{1
\times \ell} I + I^* \RR^{1 \times \ell}$.
Let $X$ be the tuple of operators on $\cH$ such that for each variable $x_k$,
the operator $X_k$ is defined by
$X_k [p]  :=
[x_k p]$.
Since $I$ is a left module by Lemma \ref{prop:posLinFunHasIdeal}, $X$ is well
defined.
Further,
\begin{align}
 \notag
 \left\langle X_k^{\ast}[p],
[q]
\right\rangle
&= \left\langle [p],
X_k [q]
\right\rangle =\left\langle [p],
[x_k q]
\right\rangle =L\left( q^*x_k^*p \right)=
\left\langle [x_k^*p],
[q]
\right\rangle.
\end{align}
Therefore
$X_k^{\ast}[p]
= [x_k^*p]$.
Further, it follows that for any $r \in \RR\axs$ that
$r(X)[p] =[rp]$.

Fix $v \in \cH^{\ell}$ to be
\[ v := \begin{pmatrix}
        [e_1 \otimes 1]\\
        \vdots \\
        [e_{\ell} \otimes 1]
       \end{pmatrix}.
\]
If $q = \sum_{i=1}^{\ell} e_i \otimes q_i \in \RR^{1 \times \ell}\axs$, then
\[q(X)v = \sum_{i=1}^{\ell} q_i(X)[e_i \otimes 1] = \sum_{i=1}^{\ell} [e_i
\otimes q_i]  = [q].\]
Therefore,
\[\cH = \{ q(X)v \mid q  \in \RR^{1 \times \ell}\axs\}.\]
If $p = \sum_{i=1}^{\ell}\sum_{j=1}^{\ell} E_{ij} \otimes p_{ij} \in \RR^{\ell
\times \ell}\axs$,  we see
\begin{align}
 \notag
p(X)v &=
\begin{pmatrix}
 \sum_{j=1}^{\ell} p_{1j}(X)[e_j \otimes 1]\\
\vdots\\
\sum_{j=1}^{\ell} p_{\ell j}(X)[e_j \otimes 1]
\end{pmatrix}
=\begin{pmatrix}
 \sum_{j=1}^{\ell} [e_j \otimes p_{1j}]\\
\vdots\\
 \sum_{j=1}^{\ell} [e_j \otimes p_{\ell j}]
\end{pmatrix}
\end{align}
so that
\begin{align}
 \notag
\langle p(X)v, v \rangle
&= \sum_{i=1}^{\ell} \left\langle
\sum_{j=1}^{\ell} [e_j \otimes p_{ij}], [e_i \otimes 1] \right\rangle\\
\notag
&= \sum_{i=1}^{\ell} \sum_{j=1}^{\ell} L(E_{ij} \otimes p_{ij})
= L(p).
\end{align}
\end{proof}

\begin{corollary}
\label{cor:GNS}
Let $\rC \subset \RR^{1 \times \ell}\axs$
be a finite right chip space.
Let $L$ be a positive linear functional on
$\rC^*\RR\axs_1 \rC$, and let
$J$ be the set
\[J := \{ \vartheta \in \rC \mid L(\vartheta^*\vartheta) = 0\}.\]
Further, suppose there exists a positive extension of $L$ to
$\rC^*\RR\axs_2 \rC$.

Let $n := \dim(\rC) -
\dim(J \cap \rC)$,
and suppose $n > 0$.
There exists a tuple $X$ of  $n \times n$ matrices over $\RR$ and a
vector $v \in \RR^{\ell n}$ such that for each $p
\in\rC^*\RR\axs_1\rC$ we have
\[v^{\ast}p(X)v = L(p).\]
 and $\RR^{\ell n} = \{p(X)v \mid p \in\rC\}$.
\end{corollary}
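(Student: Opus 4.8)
The plan is to lift $L$ to a flat extension on all of $\RR^{\ell\times\ell}\axs$, run the GNS construction on it, and then verify that the resulting GNS space has dimension exactly $n$. Concretely: the hypothesis that $L$ admits a positive extension to $\rC^*\RR\axs_2\rC$ is precisely the condition in Proposition~\ref{prop:flatExtRC}(2), so $L$ has a unique flat extension $\bar L$ to $\rC^*\RR\axs\rC$, and by Proposition~\ref{prop:flatExtRC}(3) this in turn extends to a flat extension, still denoted $\bar L$, on $\RR^{\ell\times\ell}\axs$. Applying Proposition~\ref{prop:GNS} to $\bar L$ yields $\cH=\RR^{1\times\ell}\axs/I$ with $I=\{\vartheta\in\RR^{1\times\ell}\axs\mid\bar L(\vartheta^*\vartheta)=0\}$, a tuple $X$ of operators on $\cH$, and a vector $v\in\cH^\ell$ such that $v^*p(X)v=\langle p(X)v,v\rangle=\bar L(p)$ for all $p\in\RR^{\ell\times\ell}\axs$, such that $q(X)v=[q]$ for every $q\in\RR^{1\times\ell}\axs$, and such that $\cH=\{q(X)v\mid q\in\RR^{1\times\ell}\axs\}$.

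The only step with genuine content is showing $\dim\cH=n$. Write $I_0:=\{\theta\in\RR\axs\rC\mid\bar L(\theta^*\theta)=0\}$ and keep $T$ and $J$ as in the proof of Proposition~\ref{prop:flatExtRC}, so that $\rC=J\oplus T$, $\RR\axs\rC=I_0\oplus T$, and $\dim T=\dim\rC-\dim J=n$. Put $\Theta:=\bigoplus_{j\notin\Gamma(\rC)}e_j\otimes\RR\axs$; then Lemma~\ref{lem:gammaC} gives $\RR^{1\times\ell}\axs=\RR\axs\rC\oplus\Theta=I_0\oplus T\oplus\Theta$. By the construction in Proposition~\ref{prop:flatExtRC}(3), $\bar L$ vanishes on $\bigoplus_{(k_1,k_2)\notin\Gamma(\rC)^2}E_{k_1k_2}\otimes\RR\axs$, and $\theta^*\theta$ lies in that set whenever $\theta\in\Theta$, so $\Theta\subseteq I$; likewise $I_0\subseteq I$ by the definition of $I_0$; and $\bar L$ restricts to a positive-definite inner product on $T$ (it agrees there with the inner product $L(b^*a)$ used in the proof of Proposition~\ref{prop:flatExtRC}, which is nondegenerate because $T\cap J=\{0\}$), so $I\cap T=\{0\}$. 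Hence $\RR^{1\times\ell}\axs=I\oplus T$, so $\cH\cong T$ and $\dim\cH=n>0$. Fixing an orthonormal basis of $\cH$ turns $X$ into a tuple of $n\times n$ real matrices, identifies $\cH^\ell$ with $\RR^{\ell n}$ compatibly with the Kronecker-product action of $\RR^{\ell\times\ell}\axs$, sends $v$ to a vector in $\RR^{\ell n}$, and makes $\langle\cdot,\cdot\rangle$ the Euclidean inner product.

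With this identification the two conclusions are read off directly. For $p\in\rC^*\RR\axs_1\rC\subseteq\rC^*\RR\axs\rC$ we have $\bar L(p)=L(p)$ because $\bar L$ extends $L$, so $v^*p(X)v=\bar L(p)=L(p)$. For the spanning statement, $\bar L(\vartheta^*\vartheta)=L(\vartheta^*\vartheta)=0$ for $\vartheta\in J$ shows $J\subseteq I$, so $\rC=J\oplus T$ gives $\{p(X)v\mid p\in\rC\}=\{[p]\mid p\in\rC\}=[T]$, which is all of $\cH$ since $T\hookrightarrow\cH$ is an isomorphism; this is exactly the restriction to $q\in\rC$ of the identity $\cH=\{q(X)v\mid q\in\RR^{1\times\ell}\axs\}$ supplied by Proposition~\ref{prop:GNS}. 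The main obstacle is the dimension bookkeeping in the middle paragraph --- checking that none of the three successive extensions enlarges the kernel past $I_0\oplus\Theta$; the remainder is a transcription of Propositions~\ref{prop:flatExtRC} and~\ref{prop:GNS}.
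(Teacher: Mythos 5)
Your proof is correct and follows the same route as the paper's two-line proof (invoke Proposition~\ref{prop:flatExtRC} to get a flat extension $\bar L$ to $\RR^{\ell\times\ell}\axs$, then apply Proposition~\ref{prop:GNS}); the only difference is that you usefully fill in the dimension bookkeeping $\dim\cH=n$ and the spanning-by-$\rC$ verification that the paper leaves implicit. (Note also that your argument correctly identifies the spanning set as $\cH\cong\RR^n$, which suggests the ``$\RR^{\ell n}$'' in the Corollary's last display is a typo for $\RR^n$, consistent with how the result is used in Lemma~\ref{lem:main}.)
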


\begin{proof}
By Proposition \ref{prop:flatExtRC}, there exists a flat extension
$\bar{L}$ of
$L$ to all of $\RR^{\ell \times \ell}\axs$.
Given this flat extension, apply Proposition \ref{prop:GNS} to produce the
desired
$X$ and $v$.
\end{proof}

\subsection{Non-Commutative Hankel Matrices}

Let $\omega = (\omega_i)_{i=1}^k$ be a vector whose entries form a basis for a
vector space $W \subset \RR^{1 \times \ell}\axs$. Given a linear functional $L$
on $W^*W$, the {\bf non-commutative Hankel matrix for $L$ (with respect to
$\omega$)} is the matrix
$A = (L(\omega_i^*\omega_j))_{1 \leq i,j \leq k}$.
This concept is a non-commutative analog of moment matrices---see \cite{CF},
\cite{CF2}, for example.

Recall that $\mathbb{S}^{k}$ is the set of $k \times k$ symmetric matrices
over $\RR$. Define $\langle A, B \rangle := \operatorname{Tr}(AB)$ to be the
inner product
on $\mathbb{S}^{k}$.

\begin{lemma}
 \label{lem:Hankel}
 Let $\omega = (\omega_i)_{i=1}^k$ be a vector whose entries form a basis for a
vector space $W \subset \RR^{1 \times \ell}\axs$.  Let $A \in 
\mathbb{S}^{k}$
be a matrix.  Let $I \subset \RR^{1 \times \ell}\axs$ be a left module, and
define $\cZ$ to be
\[
 \cZ := \{ C \in \mathbb{S}^{k} \mid \omega^*C\omega \in \RR^{\ell
\times
1}I + I^*\RR^{1 \times \ell}\}
\]
Then $A$ is the non-commutative Hankel matrix for some symmetric linear 
functional $L$ on
$W^*W$ such that $L([\RR^{\ell \times 1}I + I^*\RR^{1 \times \ell}] \cap W^*W)
= \{0\}$ if and only if $A \in \cZ^{\bot}$, in which case
\begin{equation}
\label{eq:defLinFun}
L(\omega^* C \omega) = \operatorname{Tr}(AC)
\end{equation}
for each $C \in \RR^{k \times k}$.
\end{lemma}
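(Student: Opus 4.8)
The plan is to put $M := \RR^{\ell \times 1}I + I^{\ast}\RR^{1 \times \ell}$ and to identify symmetric linear functionals on $W^{\ast}W$ that annihilate $M \cap W^{\ast}W$ with matrices in $\cZ^{\bot}$ by means of the linear map $\Phi \colon \RR^{k \times k} \to W^{\ast}W$ defined by $\Phi(C) := \omega^{\ast} C \omega$. Since $\Phi(E_{ij}) = \omega_i^{\ast}\omega_j$ and $W^{\ast}W = \operatorname{Span}\{\omega_i^{\ast}\omega_j\}$, the map $\Phi$ is surjective; moreover $\Phi(C)^{\ast} = \Phi(C^{T})$, and by definition $\cZ = \{C \in \mathbb{S}^{k} \mid \Phi(C) \in M\}$. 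I will also use repeatedly that $\operatorname{Tr}(AC) = \operatorname{Tr}(AC^{T}) = \sum_{i,j} A_{ij}C_{ij}$ for symmetric $A$ and arbitrary $C \in \RR^{k \times k}$, a one-line consequence of $A = A^{T}$ and transpose-invariance of the trace.

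For the forward implication, suppose $L$ is a symmetric linear functional on $W^{\ast}W$ with $A = \big(L(\omega_i^{\ast}\omega_j)\big)$ and $L(M \cap W^{\ast}W) = \{0\}$. For any $C \in \RR^{k \times k}$, linearity of $L$ gives $L(\omega^{\ast}C\omega) = \sum_{i,j} C_{ij}L(\omega_i^{\ast}\omega_j) = \sum_{i,j} C_{ij}A_{ij} = \operatorname{Tr}(AC)$, which is (\ref{eq:defLinFun}); and if $C \in \cZ$, then $\Phi(C) \in M \cap W^{\ast}W$, so $\operatorname{Tr}(AC) = L(\Phi(C)) = 0$. Hence $A \in \cZ^{\bot}$.

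For the converse, given $A \in \cZ^{\bot}$, I would define $L$ on $W^{\ast}W$ by $L(\Phi(C)) := \operatorname{Tr}(AC)$. The crux is well-definedness: if $\Phi(D) = 0$, then also $\Phi(D^{T}) = \Phi(D)^{\ast} = 0$, so $\tfrac12(D + D^{T}) \in \mathbb{S}^{k}$ lies in $\cZ$ (it is symmetric and $\Phi$ sends it to $0 \in M$), whence $\operatorname{Tr}(AD) = \operatorname{Tr}\big(A\cdot\tfrac12(D+D^{T})\big) = 0$. Thus $L$ is a well-defined linear functional; it is symmetric because $L(\Phi(C)^{\ast}) = L(\Phi(C^{T})) = \operatorname{Tr}(AC^{T}) = \operatorname{Tr}(AC) = L(\Phi(C))$; its non-commutative Hankel matrix with respect to $\omega$ is $A$, since $L(\omega_i^{\ast}\omega_j) = \operatorname{Tr}(AE_{ij}) = A_{ji} = A_{ij}$; and (\ref{eq:defLinFun}) holds by construction. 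Finally, to see $L(M \cap W^{\ast}W) = \{0\}$, write $p \in M \cap W^{\ast}W$ as $\tfrac12(p + p^{\ast}) + \tfrac12(p - p^{\ast})$; both summands lie in $M \cap W^{\ast}W$ because $M$ and $W^{\ast}W$ are closed under $\ast$, the symmetric functional $L$ annihilates the antisymmetric summand automatically, and the symmetric summand equals $\Phi(C)$ for some $C \in \mathbb{S}^{k}$ with $\Phi(C) \in M$, i.e.\ $C \in \cZ$, so $L$ of it is $\operatorname{Tr}(AC) = 0$.

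The step I expect to be the main obstacle is exactly the well-definedness of $L$ in the converse: membership in $\cZ^{\bot}$ constrains $A$ only against \emph{symmetric} matrices, whereas $\ker\Phi$ generally contains non-symmetric matrices, so one must symmetrize the kernel. This succeeds precisely because $\Phi$ intertwines transposition with the involution and because $\operatorname{Tr}(A\,\cdot\,)$ is transpose-invariant for symmetric $A$; everything else is routine bookkeeping with the trace pairing and the symmetric/antisymmetric splitting of $\RR^{\ell\times\ell}\axs$.
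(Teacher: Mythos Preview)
Your proof is correct and follows essentially the same route as the paper: define $L$ via the trace formula $L(\omega^{\ast}C\omega)=\operatorname{Tr}(AC)$ and check that this determines a symmetric functional with Hankel matrix $A$ annihilating $M\cap W^{\ast}W$. In fact you are more careful than the paper on the converse---the paper asserts well-definedness in one line, while you correctly identify that $\ker\Phi$ may contain non-symmetric matrices and handle this via the symmetrization $C\mapsto\tfrac12(C+C^{T})$, and you also explicitly verify that $L$ is symmetric and kills $M\cap W^{\ast}W$, both of which the paper leaves implicit.
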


\begin{proof}
 First, suppose $A = (a_{ij})_{1 \leq i,j \leq k}$ is the Hankel matrix of $L$
 and $L([\RR^{\ell \times 1}I + I^*\RR^{1 \times \ell}] \cap W^*W)
= \{0\}$.
Then given $C = (c_{ij})_{1 \leq i,j \leq k}$,
 \[
  L(\omega^*C\omega) = L\left( \sum_{i=1}^k \sum_{j=1}^k c_{ij}
\omega_i^*\omega_j \right) = \sum_{i=1}^k \sum_{j=1}^k a_{ij}c_{ij} =
\operatorname{Tr}(AC).
 \]
 It is therefore clear that $A \in \cZ^{\bot}$.

 Conversely, given $A = (a_{ij})_{1 \leq i,j \leq k} \in \cZ^{\bot}$,
(\ref{eq:defLinFun}) gives a well-defined linear functional since if $\omega^*C
\omega = 0 \in \RR^{\ell \times 1}I + I^*\RR^{1 \times \ell}$, then
$\operatorname{Tr}(AC) = 0$.  Further
\[
L(\omega_i^*\omega_j) = L(\omega^*E_{ij}\omega) = \operatorname{Tr}(AE_{ij}) =
a_{ij}.
\]
\end{proof}

\begin{prop}
\label{lem:posHankel}
Let $\rC \subset \RR^{1 \times \ell}\axs$ be a finite right chip space.
Let $I \subset \RR^{1 \times \ell}\axs$ be a left module generated by
polynomials in $\RR\axs_1 \rC$.
 Let $\tau = (\tau_i)_{i=1}^k$ be a vector whose entries are all elements of
$\nonlead{I} \cap \RR\axs_1\rC$, and let $T$ be the span of the $\tau_i$. Let
$L$ be a symmetric linear
functional on $T^*T$, and let $A \in \mathbb{S}^{k}$
be its non-commutative Hankel matrix.
Let $\cZ \subset \mathbb{S}^{k}$ be the space
\[
 \{ Z \in\mathbb{S}^{k} \mid \tau^* Z \tau \in \RR^{\ell \times 1} I + I^*
\RR^{1 \times \ell}\}
\]

Then $L$ can be extended to a positive linear functional $\overline{L}$ on
$\rC^*\RR\axs_2 \rC$ such that
\begin{enumerate}
 \item $\overline{L}(\iota) = L(\iota^*) = 0$ for each $\iota \in \RR^{\ell 
\times
1}I$
 \item $\overline{L}(a^*a)  = 0$ if and only if $a \in I$.
\end{enumerate}
if and only if $A \succ 0$ and $A \in \cZ^{\bot}$.
\end{prop}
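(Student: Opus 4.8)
The plan is to prove the two implications separately; the forward one is a short polarization argument and the reverse one carries essentially all of the weight.

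\smallskip\noindent\emph{Setup and forward direction.}
First fix a $\rC$-order and a $\rC$-basis $(\{\iota_i\},\{\vartheta_j\})$ for $I\cap\RR\axs_1\rC$. Since by hypothesis $\tau$ lists \emph{all} of $\nonlead{I}\cap\RR\axs_1\rC$, Lemma~\ref{lem:niceCBasis} together with Lemma~\ref{lem:leadOfSumGen} gives the direct sum $\RR\axs_1\rC=(I\cap\RR\axs_1\rC)\oplus T$ with $T=\operatorname{Span}(\tau)$, so every $w\in\RR\axs_1\rC$ is uniquely $\iota+t$ with $\iota\in I\cap\RR\axs_1\rC$ and $t=\sum_i c_i\tau_i$. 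I also use the identity $\rC^*\RR\axs_2\rC=(\RR\axs_1\rC)^*(\RR\axs_1\rC)$, which holds because $a^*pb$ with $a,b\in\rC$ and $|p|\le 2$ refactors as $(p_1^*a)^*(p_2 b)$ with $p=p_1p_2$, $|p_i|\le 1$, both factors lying in $\RR\axs_1\rC$, and conversely. Now suppose $\overline L$ exists with (1) and (2). Given $c\in\RR^k$ put $t_c:=\sum_i c_i\tau_i\in T$, so $t_c^*t_c\in T^*T\subset\rC^*\RR\axs_2\rC$ and $c^\top A c=\overline L(t_c^*t_c)\ge 0$ by positivity; if this vanishes, (2) forces $t_c\in I$, but a nonzero $t_c$ has leading monomial among the $\tau_i\in\nonlead{I}$ and hence does not lie in $I$, so $t_c=0$ and $c=0$. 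Thus $A\succ 0$. For $A\in\cZ^\perp$: any $Z\in\cZ$ has $\tau^*Z\tau\in(\RR^{\ell\times 1}I+I^*\RR^{1\times\ell})\cap T^*T$, on which $\overline L$ vanishes by (1) and symmetry, so $0=\overline L(\tau^*Z\tau)=\tr(AZ)=\langle A,Z\rangle$.

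\smallskip\noindent\emph{Reverse direction: the construction.}
Assume $A\succ 0$ and $A\in\cZ^\perp$. Let $\omega=(\tau_1,\dots,\tau_k,\iota_1,\dots,\iota_\mu,\vartheta_1,\dots,\vartheta_\sigma)$ be the corresponding basis of $\RR\axs_1\rC$, and take $\overline L$ to be the functional on $\rC^*\RR\axs_2\rC=(\RR\axs_1\rC)^*(\RR\axs_1\rC)$ whose noncommutative Hankel matrix with respect to $\omega$ is the block matrix $B:=A\oplus 0$ (the $\tau$-block equal to $A$, everything else $0$). To see this is well defined, by Lemma~\ref{lem:Hankel} it suffices that $B\in(\cZ')^\perp$, where $\cZ'=\{C:\omega^*C\omega\in\RR^{\ell\times 1}I+I^*\RR^{1\times\ell}\}$. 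This is the heart of the matter: since each $\iota_i,\vartheta_j\in I$, and $I$ is a left module while $I^*$ is a right $\RR\axs$-module, every product $\omega_a^*\omega_b$ with $a$ or $b$ indexing an $\iota$ or a $\vartheta$ already lies in $\RR^{\ell\times 1}I+I^*\RR^{1\times\ell}$; hence $\omega^*C\omega\equiv\tau^*C_{11}\tau\pmod{\RR^{\ell\times 1}I+I^*\RR^{1\times\ell}}$, so $C\in\cZ'$ iff $C_{11}\in\cZ$, and then $\tr(BC)=\tr(AC_{11})=\langle A,C_{11}\rangle=0$. Thus $\overline L$ exists, is symmetric, extends $L$ (its $\tau$-block is $A$ and $T^*T\subset\rC^*\RR\axs_2\rC$), and vanishes on $(\RR^{\ell\times 1}I+I^*\RR^{1\times\ell})\cap\rC^*\RR\axs_2\rC$, which is exactly property~(1).

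\smallskip\noindent\emph{Positivity and property (2).}
Next one checks that $w^*w\in\rC^*\RR\axs_2\rC$ forces $w\in\RR\axs_1\rC$: if $m_0$ were a monomial of $w$ of largest degree not in $\RR\axs_1\rC$, then in the coefficient of $m_0^*m_0$ in $w^*w$ the only contributing pair of monomials is $(m_0,m_0)$ — any other pair would exhibit $m_0$ as a right chip of a strictly longer monomial of $w$, which has degree exceeding $\deg m_0$ and so lies in $\RR\axs_1\rC$, all of whose right chips lie in $\RR\axs_1\rC$, a contradiction — so $m_0^*m_0$ occurs in $w^*w$ with nonzero coefficient, hence $m_0^*m_0\in\rC^*\RR\axs_2\rC$, whence $m_0\in\RR\axs_1\rC$ by Lemma~\ref{lem:canFactorMon} and the right-chip closedness of $\rC$, a contradiction. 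So write $w=\iota+t$ with $t=\sum_i c_i\tau_i$; the cross terms $\iota^*\iota+\iota^*t+t^*\iota$ lie in $\RR^{\ell\times 1}I+I^*\RR^{1\times\ell}$, so $\overline L(w^*w)=\overline L(t^*t)=c^\top A c\ge 0$, with equality iff $c=0$ iff $w=\iota\in I$; the directness of the sum $\RR\axs_1\rC=(I\cap\RR\axs_1\rC)\oplus T$ gives the converse. This is property~(2), completing the proof.

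\smallskip\noindent\emph{Main obstacle.}
The substantive step is the block computation in the reverse direction — propagating the module-consistency $A\in\cZ^\perp$ of the small Hankel matrix to $B\in(\cZ')^\perp$ for the enlarged one; this is what makes $\overline L$ well defined and simultaneously delivers property~(1). The structural identities $\rC^*\RR\axs_2\rC=(\RR\axs_1\rC)^*(\RR\axs_1\rC)$ and ``$w^*w$ in the domain $\Rightarrow w\in\RR\axs_1\rC$'' are the bookkeeping that lets one convert $A\succ 0$ — strictly, not merely $A\succeq 0$ — into property~(2): strictness is precisely what pins the kernel $\{a:\overline L(a^*a)=0\}$ down to $I$ and nothing larger.
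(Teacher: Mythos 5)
Your proof is correct and follows essentially the same route as the paper: define the extension $\overline L$ by putting the Hankel data to $A$ on the $T^*T$-block and to $0$ on all products involving $I\cap\RR\axs_1\rC$, use $A\in\cZ^\perp$ together with Lemma~\ref{lem:Hankel} (and the structure result Lemma~\ref{lem:niceCBasis2}, which you replace with the equivalent observation that all cross-terms $\omega_a^*\omega_b$ with an $\iota$ or $\vartheta$ index already lie in $\RR^{\ell\times1}I+I^*\RR^{1\times\ell}$) to get well-definedness and property~(1), and use $A\succ 0$ together with the direct sum $\RR\axs_1\rC=(I\cap\RR\axs_1\rC)\oplus T$ to get positivity and property~(2). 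The one place you genuinely deviate is the claim that $w^*w\in\rC^*\RR\axs_2\rC$ forces $w\in\RR\axs_1\rC$: the paper simply cites Lemma~\ref{lem:leadOfSquare}, whose statement (about the leading monomial of $p^*p$ for $p\in\RR\axs\rC\setminus\rC$) does not immediately deliver the conclusion without a degree/middle-word argument, whereas you give a self-contained combinatorial argument (extracting a maximal-degree monomial $m_0\notin\RR\axs_1\rC$, showing $m_0^*m_0$ survives in $w^*w$, then using Lemma~\ref{lem:canFactorMon} and right-chip-closedness of $\rC$ to force $m_0\in\RR\axs_1\rC$). Your version is more explicit and arguably tighter on that point; both arguments buy the same thing.
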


\begin{proof}
First, if there exists such a $\overline{L}$, then it is clear from
(\ref{eq:defLinFun}) that $L(a^*a) > 0$ for each $a \in T$ if and only if $A
\succ 0$, and further, Lemma \ref{lem:Hankel} implies that
$A \in \cZ^{\bot}$.

Conversely, let $A \succ 0$ and $A \in \cZ^{\bot}$.
We see that $\rC^*\RR\axs_2 \rC = (\RR\axs_1 \rC)^*(\RR\axs_1 \rC)$, so it
suffices to define $\overline{L}$ on products $p_1^*p_2$, where $p_1, p_2 \in
\RR\axs_1 \rC$.  If $\iota \in I \cap \RR\axs_1 \rC$ and $p \in \RR\axs_1$,
define $\overline{L}(p^*\iota) = \overline{L}(\iota^*p) = 0$.  This agrees with 
the
definition of $L$ by Lemma \ref{lem:Hankel}, and Lemma \ref{lem:niceCBasis2}
implies that $\overline{L}(\theta) = \overline{L}(\theta^*) = 0$ for each 
$\theta \in
\RR^{\ell \times 1}I$ on which $\overline{L}$ is defined. Further, if $a^*a \in
\rC^*\RR\axs_2 \rC$, then Lemma \ref{lem:leadOfSquare} implies that each $a \in
\RR\axs_1 \rC$. Hence $a = \iota  + \beta^* \tau$, where $\iota \in
\RR^{\ell \times 1}_1I$ and $\beta \in \RR^k$ is a constant vector.  Since $A 
\succ 0$, by
(\ref{eq:defLinFun}) we see that $\overline{L}(a^*a) = \beta^* A \beta = 0$ if 
and
only if $\beta = 0$, which is equivalent to $a \in I$.
\end{proof}

\begin{lemma}
\label{lem:pSd}
 Let $\cB \subset \mathbb{S}^{k}$ be a vector subspace.
Then exactly one of the following holds:
\begin{enumerate}
 \item\label{item:1} There exists $B \in \cB$ such that $B \succ
0$, and there exists no nonzero $A \in \cB^{\bot}$ with $A \succeq 0$.
\item\label{item:2} There exists $A \in \cB^{\bot}$ such that $A \succ 0$, and
there exists no nonzero $B \in \cB$ with $B \succeq 0$.
\item There exist nonzero $B \in \cB$ and $A \in \cB^{\bot}$ with
$A, B \succeq
0$, but there exist no $B \in \cB$ nor $A \in \cB^{\bot}$ with either $A \succ
0$ or $B \succ 0$.
\end{enumerate}

\end{lemma}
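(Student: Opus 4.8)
The plan is to establish a trichotomy for a subspace $\cB \subset \mathbb{S}^k$ via a separation/duality argument using the cone of positive semidefinite matrices. The key facts I would use are: (i) the PSD cone $\mathbb{S}^k_{\succeq 0}$ is a closed convex self-dual cone with nonempty interior equal to the positive definite matrices $\mathbb{S}^k_{\succ 0}$, and (ii) the orthogonal complement operation $\cB \mapsto \cB^{\bot}$ is an involution on subspaces, so the roles of $\cB$ and $\cB^{\bot}$ in the statement are symmetric. The three alternatives are pairwise mutually exclusive essentially by definition, since (1) and (2) each assert the \emph{nonexistence} of one of the objects that (3), and the other of (1)/(2), asserts \emph{exists}; so the real content is that at least one of the three holds.

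First I would observe that the statement is equivalent to the following: \emph{if} $\cB$ contains no nonzero PSD matrix \emph{then} $\cB^{\bot}$ contains a positive definite matrix (this gives case (2)), and symmetrically, if $\cB^{\bot}$ contains no nonzero PSD matrix then $\cB$ contains a positive definite matrix (case (1)); and if neither hypothesis holds, i.e.\ both $\cB$ and $\cB^{\bot}$ contain nonzero PSD matrices, then we are in case (3) — but we must still rule out that one of them contains a \emph{definite} matrix while the other contains a nonzero semidefinite one. To see the last point: if $B \in \cB$ with $B \succ 0$ and $A \in \cB^{\bot}$ with $A \succeq 0$, $A \neq 0$, then $\langle A, B\rangle = \operatorname{Tr}(AB) > 0$ since $A$ is a nonzero PSD matrix and $B$ is positive definite, contradicting $A \perp B$. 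So case (3), when it occurs, automatically excludes definite matrices on both sides, and the three cases are genuinely exhaustive once we prove the key implication.

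The main step, then, is: \textbf{if $\cB \cap \mathbb{S}^k_{\succeq 0} = \{0\}$, then $\cB^{\bot} \cap \mathbb{S}^k_{\succ 0} \neq \emptyset$.} I would prove this by a hyperplane separation argument. Consider the convex set $\mathcal{S} := \{C \in \mathbb{S}^k \mid C \succeq 0,\ \operatorname{Tr}(C) = 1\}$, the "spectraplex," which is compact and convex and does not meet $\cB$ by hypothesis (any point of $\cB \cap \mathcal{S}$ would be a nonzero PSD matrix in $\cB$). Since $\cB$ is a closed convex set (a subspace) disjoint from the compact convex set $\mathcal{S}$, strict separation gives a matrix $A \in \mathbb{S}^k$ and a constant with $\langle A, C\rangle > \langle A, B\rangle$ for all $C \in \mathcal{S}$, $B \in \cB$. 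Because $\cB$ is a subspace, $\langle A, B\rangle$ is bounded above over $\cB$ only if it is identically zero, so $A \in \cB^{\bot}$ and $\langle A, C\rangle > 0$ for every $C \in \mathcal{S}$; since every nonzero PSD matrix is a positive multiple of an element of $\mathcal{S}$, this says $\langle A, C\rangle > 0$ for all nonzero PSD $C$, which (using self-duality of the PSD cone, or just testing on rank-one $C = vv^*$) forces $A \succ 0$. This $A$ witnesses case (2). The symmetric statement (swap $\cB \leftrightarrow \cB^{\bot}$) gives case (1), and if both hypotheses fail we land in case (3).

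The hard part, such as it is, is organizing the logic of mutual exclusivity cleanly — verifying that cases (1), (2), (3) cannot overlap, which reduces to the single trace-positivity observation $\operatorname{Tr}(AB) > 0$ for $A \succeq 0$ nonzero and $B \succ 0$ — and making sure the separation argument is applied to the right pair of sets (a subspace versus the compact spectraplex, not versus the full PSD cone, so that strict separation is available). Everything else is routine convex geometry, and no results from earlier in the paper are actually needed beyond standard linear algebra; the lemma is a self-contained piece of semidefinite duality that will later be fed into the analysis of non-commutative Hankel matrices.
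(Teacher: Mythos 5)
Your proof is correct, and it takes a genuinely different (and cleaner) route than the paper's. Both arguments rest on a hyperplane-separation step combined with the elementary observation that $\operatorname{Tr}(AB) > 0$ whenever $B \succ 0$ and $A \succeq 0$ with $A \neq 0$. The paper, however, fixes orthonormal bases of $\cB$ and $\cB^{\bot}$, parametrizes $\mathbb{S}^k$ as $L(\alpha,\beta)$, projects the positive-definite cone onto the $\cB$-coordinates to get an open convex set $\cC \subset \RR^n$, and separates $\cC$ from the origin; this directly produces a nonzero $B \in \cB$ with $B \succeq 0$. You instead separate the subspace $\cB$ itself from the compact spectraplex $\{C \succeq 0 : \operatorname{Tr}(C) = 1\}$ inside $\mathbb{S}^k$: the separating functional $A$ must vanish identically on the subspace (so $A \in \cB^{\bot}$) and be strictly positive on the spectraplex (so $A \succ 0$, e.g.\ by testing on rank-one matrices $vv^{*}$). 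These are contrapositive formulations of the same theorem of alternatives for the self-dual PSD cone, so they play the same role in the trichotomy; your version avoids the coordinate bookkeeping and is precisely the textbook SDP-duality argument, which is arguably easier to verify. The remaining logic in your write-up---pairwise exclusivity of (1), (2), (3) by inspection of the quantifiers, and using the trace inequality to show that when both $\cB$ and $\cB^{\bot}$ contain nonzero PSD matrices neither can contain a definite one---matches the paper in substance.
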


\begin{proof}
  Let $B_1, \ldots, B_n$
be an orthonormal basis for $\cB$, and let $A_1, \ldots, A_m$ be an orthonormal
basis for $\cB^{\bot}$.
Define $L(\alpha, \beta)$, where $\alpha \in \RR^m$ and $\beta \in \RR^n$, to be
\[ L(\alpha, \beta) := \sum_{i=1}^m \alpha_i A_i + \sum_{j=1}^n \beta_j B_j.\]
The elements of $\cB$ are precisely all matrices of the form $L(0, \beta)$ and
the
elements of $\cB^{\bot}$ are precisely all matrices of the form $L(\alpha, 0)$.
For any pair $(\alpha, \beta)$,
\[ \langle L(\alpha, 0), L(0,\beta) \rangle = 0.\]
If $L(\alpha, 0) \succ 0$ for some $\alpha \in \RR^m$, then $L(0,\beta) 
\not\succeq 0$
for each $\beta \in \RR^n \setminus \{0\}$.
Similarly, if $L(0, \beta) \succ 0$ for some $\beta \in \RR^n$, then 
$L(\alpha, 0) \not\succeq 0$ for
each $\alpha \in \RR^m \setminus \{0\}$.
Therefore, either (\ref{item:1}) or (\ref{item:2}) holds, or there exist no $B
\in
\cB$ nor $A \in \cB^{\bot}$ with either $A \succ 0$ or $B \succ 0$.

Assume that (\ref{item:1}) and (\ref{item:2}) do not hold.  Let $\cC
\subset \RR^n$ be the set
\[ \cC = \{ \beta \in \RR^n \mid \mbox{exists } \alpha \in \RR^m\ \text{such
that}\
L(\alpha, \beta) \succ 0\}.\]
Since $L$ is onto and linear, $\cC$ is nonempty and convex. If $0 \in \cC$,
then $L(\alpha, 0) \succ 0$ for some $\alpha$, which is a contradiction.
Therefore, $0 \not\in \cC$, which implies that there exists $x \in \RR^n
\setminus \{0\}$ such that
$\langle x, \beta \rangle \geq 0$ for all $\beta \in \cC$.  Therefore, for each
positive-definite matrix, which, since $L$ is onto, must be of the form
$L(\alpha, \beta) \succ 0$,
\[ \langle L(0, x), L(\alpha, \beta) \rangle = \langle x, \beta \rangle \geq
0,\]
which implies that $L(0, x) \succeq 0$.
Similarly, there exists $\alpha \in \RR^m
\setminus \{0\}$ such that $L(\alpha,0) \succeq 0$.
\end{proof}

We now use Lemma \ref{lem:pSd} to construct positive linear functionals.

\begin{lemma}
\label{lem:goodSepFun}
Let
$\rC \subset \RR^{1
\times \ell}\axs$
be a finite right chip space and let $I \subset \RR\axs^{1 \times \ell}$ be
a real left
module
generated by polynomials in $\RR\axs_1\rC$. There
exists a positive linear functional $L$ on
$\rC^*\RR\axs_2\rC$ such that
the following hold:
\begin{enumerate}
 \item $L(a^*a) > 0$ for each $a \in \RR\axs_1\rC \setminus I$
 \item $L(\iota) = 0$ for each $\iota \in (\RR^{\ell \times 1} I + I^*
\RR^{1 \times \ell} ) \cap \rC^* \RR\axs_2 \rC$.
\end{enumerate}
\end{lemma}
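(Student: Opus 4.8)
The plan is a semidefinite‑feasibility argument: set up a subspace $\cZ$ of symmetric matrices, use the reality of $I$ to show $\cZ$ contains no nonzero positive semidefinite matrix, conclude from Lemma \ref{lem:pSd} that $\cZ^{\bot}$ contains a positive definite matrix $A$, and feed $A$ into Proposition \ref{lem:posHankel} to manufacture $L$. To set up: fix a $\rC$-order, let $\tau = (\tau_i)_{i=1}^{k}$ enumerate the finite set $\nonlead{I} \cap \RR\axs_1\rC$ (finite because $\rC$ is finite), and put $T := \operatorname{Span}(\tau_1, \ldots, \tau_k) \subseteq \RR\axs_1\rC$. A leading-monomial argument gives $T \cap I = \{0\}$: a nonzero element of $T$ has leading monomial in $\nonlead{I}$, whereas a nonzero element of $I$ has leading monomial in $\lead{I}$. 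Let
\[
  \cZ := \{\, Z \in \mathbb{S}^{k} \mid \tau^{*}Z\tau \in \RR^{\ell \times 1}I + I^{*}\RR^{1 \times \ell} \,\},
\]
the subspace appearing in Proposition \ref{lem:posHankel}.

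The heart of the proof is that $\cZ$ contains no nonzero positive semidefinite matrix. If $0 \preceq B \in \cZ$, write $B = \sum_{l} \beta_{l}\beta_{l}^{*}$ with $\beta_{l} \in \RR^{k}$; then $\tau^{*}B\tau = \sum_{l} p_{l}^{*}p_{l}$, where $p_{l} := \beta_{l}^{*}\tau \in T$. Since $\tau^{*}B\tau \in \RR^{\ell \times 1}I + I^{*}\RR^{1 \times \ell}$ and $I$ is real, every $p_{l}$ lies in $I$, hence in $I \cap T = \{0\}$; as the $\tau_{i}$ are linearly independent this forces each $\beta_{l} = 0$, so $B = 0$. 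Now apply Lemma \ref{lem:pSd} with $\cB := \cZ$. Its first alternative, and its third alternative, each demand a nonzero positive semidefinite matrix in $\cZ$, so neither can hold; hence the second alternative holds, that is, there is $A \in \cZ^{\bot}$ with $A \succ 0$. (The degenerate case $T = \{0\}$, i.e.\ $\RR\axs_1\rC \subseteq I$, is trivial: take $L = 0$, which is positive and satisfies both conclusions vacuously.)

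By Lemma \ref{lem:Hankel}, $A$ is the non-commutative Hankel matrix of a symmetric functional $L_{0}$ on $T^{*}T$ that vanishes on $(\RR^{\ell \times 1}I + I^{*}\RR^{1 \times \ell}) \cap T^{*}T$; since $A \succ 0$ and $A \in \cZ^{\bot}$, Proposition \ref{lem:posHankel} extends $L_{0}$ to a positive functional $L := \overline{L}$ on $\rC^{*}\RR\axs_{2}\rC$ with its two listed properties. Conclusion (1) of the lemma is then immediate: for $a \in \RR\axs_1\rC \setminus I$ we have $a^{*}a \in (\RR\axs_1\rC)^{*}(\RR\axs_1\rC) = \rC^{*}\RR\axs_{2}\rC$, so $\overline{L}(a^{*}a) \geq 0$, and this is nonzero by the second property. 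For conclusion (2): the first property of Proposition \ref{lem:posHankel} says $\overline{L}$ annihilates $\RR^{\ell \times 1}I$ and, by symmetry, $I^{*}\RR^{1 \times \ell}$, wherever these are defined; and Lemma \ref{lem:noTheta} (applicable since $I$ is generated by polynomials in $\RR\axs_1\rC \subseteq \RR\axs\rC$) together with the representation of Lemma \ref{lem:niceCBasis2} shows that every element of $(\RR^{\ell \times 1}I + I^{*}\RR^{1 \times \ell}) \cap \rC^{*}\RR\axs_{2}\rC$ is a sum of such pieces lying in $\rC^{*}\RR\axs_{2}\rC$, whence $\overline{L}$ vanishes on it. (Alternatively, extend $\overline{L}$ to a positive functional on all of $\RR^{\ell \times \ell}\axs$ using Proposition \ref{prop:flatExtRC}, observe that its zero set of squares is the left module $I$ because $I$ is generated inside $\RR\axs_1\rC$, and invoke Proposition \ref{prop:posLinFunHasIdeal}.)

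The only real obstacle is the reality step — that $\cZ$ has no nonzero positive semidefinite element — because that is the sole place the hypothesis enters and is precisely what tips Lemma \ref{lem:pSd} toward the $\succ 0$ alternative we need. Everything after it is bookkeeping with chip spaces and the Hankel-matrix dictionary, the one mildly delicate point being that conclusion (2) concerns the intersection of a sum of two subspaces with $\rC^{*}\RR\axs_{2}\rC$, which is why Lemma \ref{lem:noTheta} (or the flat-extension detour) is invoked rather than a naive term-by-term computation.
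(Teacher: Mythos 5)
Your proof is correct and follows the same route as the paper: set $\cZ := \{Z \in \mathbb{S}^{k} \mid \tau^{*}Z\tau \in \RR^{\ell \times 1}I + I^{*}\RR^{1 \times \ell}\}$, use the reality of $I$ to rule out nonzero PSD elements of $\cZ$, invoke Lemma~\ref{lem:pSd} to produce $A \succ 0$ in $\cZ^{\bot}$, and convert $A$ into the desired functional via the Hankel-matrix machinery. The only difference is one of exposition: the paper's proof is terse at the end, citing only Lemma~\ref{lem:Hankel} (which by itself only yields a functional on $T^{*}T$), whereas you explicitly route the final extension through Proposition~\ref{lem:posHankel} and spell out why its two conclusions give properties (1) and (2); you also pin down $T$ concretely as the span of $\nonlead{I}\cap\RR\axs_1\rC$, which is exactly the hypothesis Proposition~\ref{lem:posHankel} requires, and you handle the degenerate case $T = \{0\}$ that the paper passes over. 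These are refinements, not a different method.
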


\begin{proof}
Let $\RR\axs_1 \rC = I \oplus T$ for some space $T$.
Let $\tau$ be a vector of length $\mu$ whose entries form a basis for $T$.
Let $\cZ \subset \mathbb{S}^{\mu}$ be defined by
\[
\cZ :=
\left\{
Z \mid
\tau^*
Z
\tau
 \in \RR^{\ell \times 1}I + I^*\RR^{1 \times \ell}
\right\}.\]
Since $I$ is real, the space $\cZ$ contains no $Z \neq 0$ with
$Z \succeq
0$.
By Lemma \ref{lem:pSd} there exists a positive-definite matrix $C \in
\widehat{\cZ}^{\bot}$.
By Lemma \ref{lem:Hankel}, there exists a positive linear functional
$L$ on $\rC^*\RR\axs_2 \rC$ which gives the result.
\end{proof}

\begin{lemma}
 \label{lem:main}
 Let  $\rC \subset
\RR^{1 \times \ell}\axs$
be a finite right chip space and let
$I
\subsetneq \RR^{1 \times \ell}\axs$ be a real
left module generated by polynomials in $\RR\axs_1 \rC$.
Let $n = \dim(\rC) - \dim(I \cap \rC)$.
There exists $(X,v) \in V(I)^{(n)}$ such that $p(X)v \neq 0$ if $p \in \rC
\setminus I$.
\end{lemma}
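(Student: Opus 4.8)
The plan is to construct $(X,v)$ via the GNS construction applied to a separating positive functional coming from Lemma~\ref{lem:goodSepFun}. First dispose of the degenerate case: if $\rC\subseteq I$ then $\rC\setminus I=\emptyset$ and $n=0$, and the empty tuple acting on the zero space lies in $V(I)^{(0)}$ and satisfies the (vacuous) conclusion; so from now on assume $\rC\not\subseteq I$, so that $n\ge 1$. Since $I$ is a real left module generated by polynomials in $\RR\axs_1\rC$, Lemma~\ref{lem:goodSepFun} furnishes a positive linear functional $L$ on $\rC^*\RR\axs_2\rC$ with $L(a^*a)>0$ for every $a\in\RR\axs_1\rC\setminus I$ and $L(\iota)=0$ for every $\iota\in(\RR^{\ell\times1}I+I^*\RR^{1\times\ell})\cap\rC^*\RR\axs_2\rC$. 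I would then note that $J:=\{\vartheta\in\rC\mid L(\vartheta^*\vartheta)=0\}$ coincides with $\rC\cap I$: if $\vartheta\in\rC\setminus I\subseteq\RR\axs_1\rC\setminus I$ then $L(\vartheta^*\vartheta)>0$, while if $\vartheta\in\rC\cap I$ then $\vartheta^*\vartheta\in\rC^*I$, which by Lemma~\ref{lem:noTheta} lies in $(\RR^{\ell\times1}I+I^*\RR^{1\times\ell})\cap\rC^*\RR\axs\rC\subseteq\rC^*\RR\axs_2\rC$, so $L(\vartheta^*\vartheta)=0$. Hence $\dim\rC-\dim(J\cap\rC)=\dim\rC-\dim(\rC\cap I)=n$.

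Now Corollary~\ref{cor:GNS}, applied to this $L$ (legitimate since $n>0$), produces a tuple $X$ of $n\times n$ real matrices and $v\in\RR^{\ell n}$ with $v^*p(X)v=L(p)$ for every $p\in\rC^*\RR\axs_1\rC$. Its proof builds $X,v$ from the GNS construction on a flat extension $\bar L$ of $L$ (Propositions~\ref{prop:flatExtRC} and~\ref{prop:GNS}); writing $\tilde I:=\{\vartheta\in\RR^{1\times\ell}\axs\mid\bar L(\vartheta^*\vartheta)=0\}$ and $\cH:=\RR^{1\times\ell}\axs/\tilde I$ (an inner product space on which $X$ acts, necessarily of dimension $n$), we get $q(X)v=[q]$ for all $q\in\RR^{1\times\ell}\axs$. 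The GNS inner product on $\cH$ is positive definite, so taking an orthonormal basis realizes $X$ as a matrix tuple for which $X_k^*$ is the transpose, and evaluation of $*$-polynomials is then consistent with the definition of $V(I)^{(n)}$. The non-vanishing assertion is immediate: for $p\in\rC\setminus I$ we have $p^*p\in\rC^*\rC\subseteq\rC^*\RR\axs_1\rC$, so $\|p(X)v\|^2=v^*(p^*p)(X)v=L(p^*p)>0$ and hence $p(X)v\ne 0$.

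It remains to verify $(X,v)\in V(I)^{(n)}$, i.e.\ $\iota(X)v=[\iota]=0$ for every $\iota\in I$, equivalently $I\subseteq\tilde I$; this is the crux of the argument. Proposition~\ref{prop:flatExtRC} identifies the space $\{\theta\in\RR\axs\rC\mid\bar L(\theta^*\theta)=0\}$ as the left module generated by $G:=\{\iota\in\RR\axs_1\rC\mid L(b^*\iota)=0\text{ for all }b\in\rC\}$. Since $I$ is generated by polynomials in $\RR\axs_1\rC$, it suffices to show each such generator $\iota$ lies in $G$: for any $b\in\rC$, the product $b^*\iota$ lies in $\rC^*I$, hence by Lemma~\ref{lem:noTheta} in $(\RR^{\ell\times1}I+I^*\RR^{1\times\ell})\cap\rC^*\RR\axs_2\rC$, so $L(b^*\iota)=0$. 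Therefore $I$ is contained in the left module generated by $G$, which is $\{\theta\in\RR\axs\rC\mid\bar L(\theta^*\theta)=0\}\subseteq\tilde I$, and so $\iota(X)v=0$ for every $\iota\in I$. The main obstacle is exactly this inclusion $I\subseteq\tilde I$: extracting it from the flat-extension machinery via the explicit generating set $G$ together with the two defining properties of $L$; the rest is bookkeeping with chip spaces and the GNS dictionary.
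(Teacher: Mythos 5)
Your proposal is correct and follows essentially the same route as the paper: take the separating positive functional from Lemma~\ref{lem:goodSepFun}, realize it via Corollary~\ref{cor:GNS} (which rests on Propositions~\ref{prop:flatExtRC} and~\ref{prop:GNS}), and then check that squares of $\rC\setminus I$ get positive $L$-value while $I$ lands in the GNS kernel. Your treatment is in fact slightly more careful than the paper's in two spots: you explicitly verify $J=\rC\cap I$ so that the $n$ from Corollary~\ref{cor:GNS} matches the $n$ in the lemma, and you correctly dispose of the case $\rC\subseteq I$ (where $n=0$), whereas the paper's opening assertion that $I\subsetneq\RR^{1\times\ell}\axs$ forces $n>0$ is not literally true for an arbitrary admissible $\rC$ (it holds only for the $\rC=\RR^{1\times\ell}\axs_d$ used in the application).
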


\begin{proof}

First, $I \neq \RR^{1 \times \ell}\axs$ implies that $n > 0$.
Let $L$ be a linear functional with the properties described by
Lemma
\ref{lem:goodSepFun}.  We see that $L$ restricts
to a functional on $\rC^*\RR\axs_1\rC$. By Corollary \ref{cor:GNS} we produce a
tuple $(X,v) \in (\RR^{n \times n})^g \times \RR^n$, for some $n \in \NN$, such
that $v^*p(X)v = L(p)$ for each $p \in \rC^*\RR\axs_1
\rC$, and such that
\[
 \RR^n = \{c(X)v \mid c \in \rC \}.
\]
Therefore if $a \in \rC$,
\[
 \|a(X)v \|^2 = v^*a(X)^*a(X)v = L(a^*a)
\]
which is nonzero if and only if $a \not\in I$.
Further, if $\iota \in I \cap \RR\axs_1 \rC$, then $L(c^*\iota) = 0$ for each $c
\in \rC$ by
Proposition \ref{prop:flatExtRC}.  Since $\iota(X)v \in \RR^n$, there
exists some $c \in \rC$ such that $c(X)v = \iota(X)v$ and so
\[\|\iota(X)v\|^2 = v^*c^*(X)\iota(X)v = L(c^*\iota) = 0.\]
 Since $I$ is generated by its elements in $\RR\axs_1 \rC$, this implies that
$(X,v) \in V(I)$.
\end{proof}

\subsection{The Matrix Non-Commutative Left Nullstellensatz}
\label{sec:mainResults}

\begin{prop}
   \label{thm:lnss}
If $I \subset \RR^{1 \times \ell}\axs$ is a finitely-generated left module,
then $\rr{I} = \sqrt{I}$.
\end{prop}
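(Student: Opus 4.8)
The containment $\rr{I} \subseteq \sqrt{I}$ holds for every left module by Proposition~\ref{prop:cIcCIsReal} and the remark that follows it, so the whole content is the reverse inclusion $\sqrt{I} \subseteq \rr{I}$. I would prove it by contraposition: given $q \in \RR^{1 \times \ell}\axs$ with $q \notin \rr{I}$, the goal is to exhibit a pair $(X,v) \in V(I)$ with $q(X)v \neq 0$, which forces $q \notin \cI(V(I)) = \sqrt{I}$.

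The plan is to apply Lemma~\ref{lem:main} to the real left module $\rr{I}$ itself. Two preparations are needed. First, collapse a finite generating set of $I$ and the polynomial $q$ into a single finite right chip space: let $\rC$ be spanned by all right chips of all monomials occurring in $q$ and in a finite generating set of $I$, together with $e_j \otimes 1$ for $j = 1, \dots, \ell$. Then $\rC$ is a finite right chip space, $\Gamma(\rC) = \{1, \dots, \ell\}$, both $q$ and all generators of $I$ lie in $\rC$, and in particular $I$ is generated by polynomials in $\RR\axs_1 \rC$. Second --- and this is the crux --- one needs $\rr{I}$ to be generated by polynomials in $\RR\axs_1 \rC$ as well, after possibly enlarging $\rC$ to a larger finite right chip space (which harms neither of the previous points, since $\RR\axs_1\rC$ only grows). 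This is the noncommutative, module-theoretic analogue of the Noetherian fact that the real radical of a finitely generated ideal is finitely generated; here it should fall out of the chip-space and $\rC$-basis apparatus of Sections~\ref{sect:allAboutRC}--\ref{sec:DoubleCOrders} (morally, that $\rr{I}$ is generated below twice the degree of the generators of $I$, cf.\ Theorem~\ref{thm:algorStops}), or be imported from \cite{chmn}. I expect this to be the main obstacle; everything after it is formal.

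Granting that $\rr{I}$ is generated by polynomials in $\RR\axs_1 \rC$ for such a finite $\rC$ with $q \in \rC$: if $\rr{I} = \RR^{1 \times \ell}\axs$ then $q \in \rr{I}$, contradicting the hypothesis, so $\rr{I} \subsetneq \RR^{1 \times \ell}\axs$. Lemma~\ref{lem:main} then supplies an $n \in \NN$ and a pair $(X,v) \in V(\rr{I})^{(n)}$ with $p(X)v \neq 0$ for every $p \in \rC \setminus \rr{I}$; since $q \in \rC$ and $q \notin \rr{I}$, this gives $q(X)v \neq 0$. Because $I \subseteq \rr{I}$ we have $V(\rr{I}) \subseteq V(I)$, so $(X,v) \in V(I)$, and hence $q \notin \cI(V(I)) = \sqrt{I}$, as wanted.

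Equivalently, one may package the formal half as a standalone statement --- every real left module generated by polynomials in a finite right chip space is radical, which is immediate from Lemma~\ref{lem:main} after enlarging the chip space to contain the given test polynomial --- and then finish by the elementary observation that $\sqrt{\rr{I}} = \sqrt{I}$ (from $I \subseteq \rr{I} \subseteq \sqrt{I}$, since $\sqrt{\cdot}$ is monotone and idempotent), whence $\rr{I} = \sqrt{\rr{I}} = \sqrt{I}$. Either way, the only non-formal ingredient is the finite generation of $\rr{I}$ inside a finite right chip space, and that is where the real work of the preceding sections is spent.
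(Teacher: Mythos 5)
Your approach coincides with the paper's: reduce to Lemma~\ref{lem:main}, applied to the real left module $\rr{I}$, with $\rC$ a finite right chip space containing the test polynomial and a generating set of $I$. The repackaging in your last paragraph ($\sqrt{\rr{I}} = \sqrt{I}$ by monotonicity and idempotence of $\sqrt{\phantom{I}}$, so it suffices to show the real module $\rr{I}$ is radical) is a clean way to say the same thing.

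The gap you flag --- that $\rr{I}$ must itself be generated by polynomials in $\RR\axs_1\rC$ --- is genuine and is exactly where the content lies; the paper's one-line proof does not address it, yet Lemma~\ref{lem:main} cannot be invoked without it. The precise ingredient that fills it is Lemma~\ref{lem:sosInIC}: if $I'$ is generated by polynomials in $\RR\axs_1\rC$ and $\sum_i p_i^*p_i \in \RR^{\ell\times 1}I' + (I')^{*}\RR^{1\times\ell}$, then each $p_i \in I' + \rC$. Starting from $I$ and repeatedly adjoining such $p_i$'s whenever some $p_i\notin I'$, the enlarged module is generated by the old generators together with the $\rC$-components of the new $p_i$'s, so it remains generated in $\RR\axs_1\rC$ --- with the \emph{same} $\rC$, no enlargement is needed --- while its intersection with the finite-dimensional space $\RR\axs_1\rC$ strictly increases at each step. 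The process therefore terminates; the terminal module is real, contains $I$, and (by induction, using that each newly adjoined $p_i$ lies in $\rr{I}$ because $\rr{I}$ is real) is contained in $\rr{I}$, hence equals $\rr{I}$. This is precisely the termination argument in the proof of Theorem~\ref{thm:algorStops}(2). Note that Lemma~\ref{lem:sosInIC} and the lemmas it rests on (Lemmas~\ref{lem:gammaC}, \ref{lem:niceCBasis2}, \ref{lem:leadOfSquare}) live in \S\ref{sect:allAboutRC}--\S\ref{sec:DoubleCOrders} and the opening of \S\ref{sec:realRadStuff} and do not depend on \S\ref{sect:linFun}, so invoking it here is not circular even though it appears later in the text. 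In short, your instinct that the missing ingredient is a chip-space finiteness fact was exactly right; the lemma to cite is Lemma~\ref{lem:sosInIC}.
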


\begin{proof}
Let $p \in \RR^{1 \times \ell}\axs$.  Choose $d$ sufficiently large so that
$\deg(p) \leq d$ and so that $I$ is generated by polynomials with degree
bounded by $d$. Let $\rC = \RR^{1 \times \ell}\axs_{d}$.  Then Lemma
\ref{lem:main} implies the existence of a tuple $(X,v) \in V(I)$ such that
$p(X)v \neq 0$.
\end{proof}

We now prove Theorem \ref{thm:mainFromNotes}.

\begin{proof}[Proof of Theorem \ref{thm:mainFromNotes}]
 Note that $p_i(X)v = 0$ means each row of $p_i(X)v = 0$, i.e. $e_k^*p_i(X)v =
0$ for each $e_k \in \RR^{1 \times \nu_i}$.  Therefore
\[ V(I) = V\left( \sum_{i=1}^k
\RR^{1 \times \nu_i} \axs p_i \right).\]
The first part of the result follows from Proposition \ref{thm:lnss}.

Next, if $q$ is an element
of the real left module (\ref{eq:checkIReal}), then
\[q = \sum_i^{\finite} \sum_{j=1}^k a_{ij} b_{ij} p_j\]
for some $a_{ij} \in \RR^{\nu \times 1}$ and $b_{ij} \in \RR^{1 \times
\ell}\axs$.
Therefore,
\[q = \left( \sum_i^{\finite} a_{ij} b_{ij} \right) p_j. \]
\end{proof}

\section{Extension to $\mathbb{C}$ and $\mathbb{H}$}
\label{sec:CandH}

We now show how to extend the main results of the paper to the case where the
polynomials have complex or quaternion coefficients.

There are well-known injective homomorphisms of $\CC$ and $\HH$ into $\RR^{2
\times 2}$ and $\RR^{4 \times 4}$ respectively.  It therefore makes sense to
think of matrices of NC polynomials in with coefficients in $\CC$ or $\HH$ as
matrices of NC polynomials with coefficients in $\RR$.

Let $\CC\axs$ be the space of NC polynomials with coefficients in $\CC$.  Here,
the involution $\ast$ acts on $\CC$ by conjugation so that for each $p \in
\CC\axs$ we have $p^*(X) = p(X)^*$, where $\ast$ on complex matrices denotes the
conjugate transpose.

Let $\HH\axs$ denote the space of NC polynomials over $\HH$.  Here the letters
$x_i$ and $x_j^*$ do not commute with the non-real elements of $\HH$ because, in
general, if $a \in \HH \setminus \RR$ and $X \in \HH^{n \times n}$, then $aX
\neq Xa$.

We also define a space $\HH_c \azs$ to be the space of NC polynomials over
$\HH$ where the letters $z_i$ and $z_j^*$ commute with $\HH$.

Over these spaces, there are precise analogs of left module, zero set, radical
left module, and real left module, which will be denoted as they were in the
real case.

\subsection{Quaternion Case}

The most general case $\HH\axs$.
We now present some simple results for that case, noting that the complex valued
case is similar (but slightly easier).

Let $\psi: \HH \rightarrow \RR^{1 \times 4}$
be the $\RR$-linear bijection defined by
\[
 \psi(a + b i + cj + d k) = (a, b, c, d).
\]
We further extend $\psi$ to $\HH^{\nu \times \ell}$ by coordinates.
Further, we can extend $\psi$ to map $\HH^{\nu \times \ell}_c\azs$
into $\RR^{4\nu \times 4\ell}\axs$  by applying $\psi$ to
coefficients and replacing $\azs$ with $\axs$.
This extension of $\psi$ is $\RR\axs$-linear.

\begin{prop}
\label{prop:realProj}
 If $I \subset \HH_c^{1 \times \ell}\axs$ is a left module, then $J = \psi(I)
\subset \RR^{1 \times 4 \ell}\axs$ is also a left module.  Further, $I$ is real
if and only if $J$ is.
\end{prop}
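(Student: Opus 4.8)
The plan is to reduce everything to bookkeeping about how the $\RR$-linear bijection $\psi\colon \HH \to \RR^{1\times 4}$ interacts with right multiplication by quaternions and with the involution. First I would fix, once and for all, the explicit ring homomorphism $\rho\colon \HH \to \RR^{4\times 4}$ implementing right multiplication, i.e.\ the matrix $\rho(a)$ such that $\psi(qa) = \psi(q)\rho(a)$ for all $q,a \in \HH$; this exists and is $\RR$-linear in $a$ and satisfies $\rho(ab) = \rho(a)\rho(b)$, $\rho(1) = \Id{4}$. Two facts about $\rho$ will be needed: (i) each $\rho(a)$ is invertible when $a \ne 0$ (since $\rho(a)\rho(a^{-1}) = \Id{4}$), so $\rho$ maps $\HH^\times$ into $GL_4(\RR)$, and (ii) $\rho$ is compatible with conjugation in the sense that $\rho(\bar a) = \rho(a)^*$ up to the correct identification — more precisely, with $J$ the block permutation realizing $\psi(\bar q)$ from $\psi(q)$, one has the relation $\psi(a^* q^* ) = \psi(qa)^{\text{(conj)}}$, which is exactly what is needed to see that $\psi$ intertwines the two involutions. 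Because $\HH_c\azs$ has the $z_i, z_i^*$ commuting with $\HH$, the extension of $\psi$ to $\HH_c^{1\times\ell}\axs \to \RR^{1\times 4\ell}\axs$ is genuinely $\RR\axs$-linear, a point already noted in the excerpt and which I would simply invoke.

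Next I would prove the ``left module'' claim. Since $\psi$ is $\RR\axs$-linear and injective, $J = \psi(I)$ is an $\RR$-subspace closed under left multiplication by $\RR\axs$. The remaining point is closure under left multiplication by the scalars coming from $\HH$; but here is the subtlety — on the $\RR$-side there is no quaternion scalar action, so what must be checked is that $J$ is closed under left multiplication by all of $\RR\axs$, which it already is, and that $\psi$ of a left $\HH_c\axs$-submodule really does land in a set that is a left $\RR\axs$-submodule of $\RR^{1\times 4\ell}\axs$. The key observation is that $\psi$ sends the $\HH$-span of a set $S$ to the $\RR$-span of $\{\psi(s)\rho(a) : s \in S, a \in \HH\}$, and each $\rho(a)$ is a constant matrix in $\RR^{4\ell\times 4\ell}$ acting on the right; since a left $\RR\axs$-submodule need not be closed under right multiplication by constant matrices, one must be slightly careful, but $I$ being a left $\HH_c$-module means $I$ is in particular closed under the right $\HH$-action built into $\HH_c^{1\times\ell}\axs$, and tracking this through $\psi$ gives exactly that $J$ absorbs the needed right-constant multiplications while $\RR\axs$-linearity of $\psi$ handles the left action. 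Assembling these, $J$ is a left $\RR\axs$-submodule.

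For the equivalence ``$I$ real $\iff$ $J$ real'' I would argue both directions by transporting sums of squares across $\psi$. The forward direction: suppose $\sum_i p_i^* p_i \in \RR^{4\ell\times 1} J + J^* \RR^{1\times 4\ell}$ with $p_i \in \RR^{\nu_i \times 4\ell}\axs$. Pull the $p_i$ back to quaternion polynomials via $\psi^{-1}$ (possible after grouping columns in blocks of four and using surjectivity of $\psi$), use the intertwining of involutions to rewrite $p_i^*p_i = \psi(\text{something})$ applied appropriately, and use that $\RR^{4\ell\times1}J + J^*\RR^{1\times4\ell}$ is the $\psi$-image of $\HH^{\ell\times1}_c I + I^* \HH^{1\times\ell}_c$ together with the $\rho(\HH)$ right-multiplications, which are harmless because $\HH$-multiplication is invertible. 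Then realness of $I$ forces each pulled-back $p_i \in I$, hence each $p_i \in J$ (using Proposition~\ref{prop:diffMiReal} to pass between the $1\times$ and $\nu_i\times$ formulations). The converse is the same argument run backwards: a sum-of-squares witness over $\HH$ maps under $\psi$ to one over $\RR$, realness of $J$ kills it, and injectivity of $\psi$ transports the conclusion back.

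I expect the main obstacle to be precisely the interaction between $\psi$ and the \emph{right} quaternion action: a left $\RR\axs$-module is not automatically stable under right multiplication by constant $4\times4$ blocks, so the proof that $J$ is a left module, and that $\RR^{4\ell\times1}J + J^*\RR^{1\times4\ell}$ is exactly the $\psi$-image of the corresponding quaternion set, both hinge on the invertibility of $\rho(a)$ for $a \ne 0$ — i.e.\ on the fact that right multiplication by a nonzero quaternion is an $\RR$-linear \emph{isomorphism}. Getting the conjugation-compatibility identity $\rho(\bar a)$ versus $\rho(a)^*$ stated with the correct block-transpose convention, so that $\psi$ honestly intertwines $*$ on $\HH_c\axs$ with $*$ on $\RR\axs$, is the other place where care is required; everything else is routine transport of structure.
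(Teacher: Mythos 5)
Your proposal correctly identifies the one auxiliary construction that does all the work: an $\RR$-algebra $*$-homomorphism $\rho\colon \HH\to\RR^{4\times4}$ implementing right multiplication relative to $\psi$, i.e.\ $\psi(qa)=\psi(q)\rho(a)$. That is exactly the map $\phi$ in the paper's proof (whose rows are $\psi(q),\psi(iq),\psi(jq),\psi(kq)$, and which satisfies $\phi(\bar a)=\phi(a)^*$). But as written your argument has two concrete problems.

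For the left-module claim you have overcomplicated matters. A left module here is a left $\RR\axs$-submodule of $\RR^{1\times 4\ell}\axs$; the only closure properties to check are under $\RR$-linear combinations and under left multiplication by $\RR\axs$, and both follow in one line from $\psi$ being $\RR$-linear and $\RR\axs$-linear. There is no requirement that $J$ absorb right multiplication by constant $4\times4$ blocks, so the paragraph about tracking ``the right $\HH$-action built into $\HH_c^{1\times\ell}\axs$'' is solving a nonexistent problem. The paper's proof of this part is two sentences.

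More seriously, at the decisive step you write that ``a sum-of-squares witness over $\HH$ maps under $\psi$ to one over $\RR$.'' That fails: $\psi$ is an $\RR$-linear bijection onto row vectors, not a ring homomorphism, so $\psi(p^*p)$ is not $\psi(p)^*\psi(p)$ and a sum of hermitian squares does not transport under $\psi$. The correct move is to apply the $*$-homomorphism $\rho$ (the paper's $\phi$) to the entire identity $\sum_i p_i^*p_i=\sum_j(q_j^*r_j+r_j^*q_j)$ coordinate-wise, yielding $\sum_i\rho(p_i)^*\rho(p_i)=\sum_j(\rho(q_j)^*\rho(r_j)+\rho(r_j)^*\rho(q_j))$ in $\RR^{4\ell\times4\ell}\axs$. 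The fact you need but do not state is that the four rows of $\rho(r_j)$ are precisely $\psi(r_j),\psi(ir_j),\psi(jr_j),\psi(kr_j)$, which all lie in $J$ because $I$ is a left $\HH$-module; this is what places the right-hand side in $\RR^{4\ell\times1}J+J^*\RR^{1\times4\ell}$. Realness of $J$, applied to the $4\times4\ell$ blocks $\rho(p_i)$ via Proposition~\ref{prop:diffMiReal}, then puts every row of $\rho(p_i)$ in $J$, and the first row is $\psi(p_i)$, hence $p_i\in I$. Without identifying the rows of $\rho(\cdot)$ as $\psi$ of left $\HH$-translates, the transport argument does not close; the invertibility of $\rho(a)$ that you emphasize is not the point.
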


\begin{proof}
 If $a, b \in I$, then clearly $\psi(a) + \psi(b) = \psi(a + b) \in J$ since
$\psi$ is linear. Further, if $c \in \RR\axs$, then $c\psi(a) = \psi(ca) \in
J$.  Therefore $J$ is a left module.

It is an easy exercise to show that $I$ being real implies that $J$ is real.
Conversely, suppose
\[
 \sum_i^{\rm finite} p_i^*p_i  = \sum_j^{\rm finite} q_j^*r_j + \sum_j r_j^*q_j
\]
where each $p_i, q_j, r_j \in \HH_c^{1 \times \ell}\azs$, and $r_j \in I$.
Let $\phi: \HH \rightarrow \RR^{4 \times 4}$ be the injective homomorphism
\[
 \phi(a + bi + cj + d k) = \begin{pmatrix}
                 a&b&c&d\\-b&a&-d&c\\-c&d&a&-b\\-d&c&b&a
                \end{pmatrix}
                =
                \begin{pmatrix}
                \psi(a + bi + cj + dk)\\
                \psi(i(a + bi + cj + dk)) \\
                \psi(j(a + bi + cj + dk)) \\
                \psi(k(a + bi + cj + dk))
                \end{pmatrix}
\]
and extend $\phi$ to $\HH_c^{1 \times \ell}\azs$ by coordinates.
Then
\[
  \sum_i^{\rm finite} \phi(p_i)^*\phi(p_i)  = \sum_j^{\rm finite}
\phi(q_j)^*\phi(r_j) + \sum_j \phi(r_j)^*\phi(q_j).
\]
The rows of each $\phi(r_j)$ are $\psi(r_j), \psi(ir_j), \psi(jr_j), \psi(kr_j)
\in J$. Therefore, if $J$ is real, then each row of each $\phi(p_i)$ is in $J$.
In particular, $\psi(p_i)$ is a row of $\phi(p_i)$, so each $p_i \in I$.
\end{proof}

If $x = (x_1, \ldots, x_g)$ and $z = (z_1, \ldots, z_{4g})$, define a map
$\varphi: \HH\axs \rightarrow \HH_c\azs$ by
\[
\varphi(p) = p(z_1 + i z_2 + jz_3 + kz_4, \ldots, z_{4g-3} + i z_{4g-2} +
jz_{4g-1} + k z_{4g}).
\]
Clearly $\varphi$ is an injective homomorphism.  Further, we see that $\varphi$
is actually surjective and has inverse given by the maps
\begin{align}
\notag
z_{4m - 3} &\mapsto \frac{1}{4}(x_m - i x_m i - j x_m j - k x_m k)\\
\notag
z_{4m - 2} &\mapsto \frac{1}{4}(-ix_m - x_m i - k x_m j + j x_m k)\\
\notag
z_{4m - 1} &\mapsto \frac{1}{4}(-jx_m + k x_m i - x_m j -i x_m k)\\
\notag
z_{4m} &\mapsto \frac{1}{4}(-kx_m -jx_m i +i x_m j - x_m k)
\end{align}

\begin{prop}
\label{prop:realProj2}
Let $I \subset \HH\axs$ be a subset and let $J = \varphi(I)$.  Then $I$ is a
left module if and only if $J$ is.  Further, if $I$ and $J$ are left modules,
$I$ is real if and only if $J$ is.
\end{prop}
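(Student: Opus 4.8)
The plan is to exploit that $\varphi\colon\HH\axs\to\HH_c\azs$ is not merely an injective homomorphism but in fact a $\ast$-algebra isomorphism; once this is in hand, both assertions are pure transport of an algebraic property along an isomorphism, in the spirit of (but easier than) Proposition~\ref{prop:realProj}.

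First I would record that $\varphi$ is a $\ast$-isomorphism. That $\varphi$ is an injective $\RR$-algebra homomorphism is already noted, and the excerpt exhibits an explicit two-sided inverse (the displayed formulas $z_{4m-3}\mapsto\tfrac14(x_m-ix_mi-jx_mj-kx_mk)$, and the analogous ones for $z_{4m-2},z_{4m-1},z_{4m}$), so $\varphi$ is bijective; checking that these formulas really invert $\varphi$ is a short quaternion computation, resting on identities of the form $\tfrac14(a-iai-jaj-kak)=\operatorname{Re}(a)$. It then remains to verify that $\varphi$ intertwines the involutions, i.e.\ $\varphi(p^\ast)=\varphi(p)^\ast$. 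Since $\varphi$ is $\RR$-linear and multiplicative it suffices to check this on the generators, where $\varphi(x_m^\ast)=\varphi(x_m)^\ast$ follows from $i^\ast=-i$, $j^\ast=-j$, $k^\ast=-k$ together with the fact that the letters $z_i$ commute with $\HH$ inside $\HH_c\azs$ (which is precisely why the natural definition of $\varphi$ sends $x_m^\ast$ to $z_{4m-3}^\ast-iz_{4m-2}^\ast-jz_{4m-1}^\ast-kz_{4m}^\ast$). Hence $\varphi$ and $\varphi^{-1}$ are $\ast$-algebra isomorphisms.

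For the first claim I would argue as follows. A left module in $\HH\axs$ (resp.\ $\HH_c\azs$) is, by the stated analog of the real-coefficient case, a subset closed under addition and under left multiplication by the ambient algebra. Since $\varphi$ and $\varphi^{-1}$ are $\RR$-algebra isomorphisms they carry sums to sums and left products $rp$ to $\varphi(r)\varphi(p)$, and $r\mapsto\varphi(r)$ is a bijection of $\HH\axs$ onto $\HH_c\azs$; hence $I$ is a left module if and only if $J=\varphi(I)$ is.

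For the second claim, assume $I$ and $J=\varphi(I)$ are left modules. Suppose $\sum_i^{\finite}a_i^\ast a_i=\sum_j^{\finite}(b_j^\ast c_j+c_j^\ast b_j)$ with $a_i,b_j,c_j\in\HH\axs$ and each $c_j\in I$. Applying $\varphi$ and using $\ast$-compatibility gives $\sum_i^{\finite}\varphi(a_i)^\ast\varphi(a_i)=\sum_j^{\finite}(\varphi(b_j)^\ast\varphi(c_j)+\varphi(c_j)^\ast\varphi(b_j))$ with each $\varphi(c_j)\in J$; so if $J$ is real then each $\varphi(a_i)\in J$, whence $a_i=\varphi^{-1}(\varphi(a_i))\in I$. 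Thus $J$ real implies $I$ real. Conversely, since $\varphi$ is surjective every element of $\HH_c\azs$ is $\varphi$ of a unique element of $\HH\axs$, so the same argument run with $\varphi^{-1}$ in place of $\varphi$ shows $I$ real implies $J$ real. The only step that is not purely formal is the verification (second paragraph) that $\varphi$ is a genuine $\ast$-isomorphism — concretely, that the displayed formulas furnish a two-sided inverse and that $\varphi$ respects the two (differently flavored) involutions; granted that, the proposition follows at once.
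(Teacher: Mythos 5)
Your proof is correct and takes essentially the same route as the paper, which disposes of the proposition in one line by noting it follows from $\varphi$ being a bijective homomorphism; you simply spell out the details the paper leaves implicit, in particular the verification that $\varphi$ intertwines the involutions, which is exactly the point needed for the ``real'' property to transport.
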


\begin{proof}
This follows easily from $\varphi$ being a bijective homomorphism between
$\HH\axs$ and $\HH_c \azs$.
\end{proof}

\subsection{Extension of the Left Nullstellensatz to $\CC$ and $\HH$}

\begin{corollary}
\label{cor:extensionCH}
Let $\mathbb{F} = \RR, \CC$ or $\HH$.
Let $p_1, \ldots, p_k$ be such that each
$p_i \in \mathbb{F}^{\nu_i \times \ell}\axs$ for some $\nu_i \in \NN$.
Define
\[
 J_\nu := \setmult{\mathbb{F}^{\nu \times 1}}{\rr{\sum_{i=1}^k
\setmult{\mathbb{F}^{1
\times \nu_i}\axs}{ p_i}}}
\]
for $\nu \in \NN$.
Let
$q \in \mathbb{F}^{\nu \times \ell}\axs$.
% for some $\nu \in \NN$.
Then $q(X)v = 0$ for all $(X,v) \in  \bigcup_{n \in \NN} (\mathbb{F}^{n \times 
n})^g
\times
\mathbb{F}^{\ell n}$ such that
$p_1(X)v, \ldots, p_k(X)v = 0$ if and only if  $q \in J_\nu$.
\end{corollary}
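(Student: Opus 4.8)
The plan is to reduce the statement, for any of $\mathbb{F}=\RR,\CC,\HH$, to the assertion that $\rr I=\sqrt I$ for every finitely generated left module $I\subset\mathbb{F}^{1\times\ell}\axs$, and then to establish that assertion for $\CC$ and $\HH$ (for $\RR$ it is Proposition~\ref{thm:lnss}). For the reduction I would mimic the passage from Proposition~\ref{thm:lnss} to Theorem~\ref{thm:mainFromNotes}: with $I:=\sum_i\mathbb{F}^{1\times\nu_i}\axs p_i$ one has $V(I)=\{(X,v)\mid p_1(X)v=\dots=p_k(X)v=0\}$, the condition $q(X)v=0$ on all of $V(I)$ is equivalent to every row of $q$ lying in $\cI(V(I))=\sqrt I$, i.e.\ to $q\in\mathbb{F}^{\nu\times1}\sqrt I$, and this is $J_\nu$ precisely when $\sqrt I=\rr I$. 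This step uses nothing about $\mathbb{F}$.

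For the assertion itself I would work with $\HH$; the complex case runs the same way and is easier, as it needs no analogue of the map $\varphi$ (the variables already commute with $\CC$). Compose $\varphi\colon\HH\axs\to\HH_c\azs$ from Proposition~\ref{prop:realProj2} with the coefficient-wise extension of $\psi\colon\HH\to\RR^{1\times4}$ from Proposition~\ref{prop:realProj} to obtain an $\RR\axs$-linear bijection $R\colon\HH^{1\times\ell}\axs\to\RR^{1\times4\ell}\axs$; by Propositions~\ref{prop:realProj} and~\ref{prop:realProj2}, suitably upgraded from $\HH\axs$ to the matrix/module setting, $R$ carries real left modules over $\HH$ to real left $\RR\axs$-modules and conversely. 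For finitely generated $I\subset\HH^{1\times\ell}\axs$ set $\hat I:=R(I)$.

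I would then check two compatibilities. First, $\rr{\hat I}=R(\rr I)$: since $R$ matches real modules both ways, $R(\rr I)$ is a real module containing $\hat I$, so $\rr{\hat I}\subseteq R(\rr I)$; for the reverse one checks that $\rr{\hat I}$ is invariant under the residual $\HH$-action transported through $R$ (left multiplication by a quaternion becomes right multiplication by a constant matrix; the generators of $\hat I$ are invariant since $I$ is $\HH$-stable, and the real radical of an invariant module is invariant), so $R^{-1}(\rr{\hat I})$ is again a left module over $\HH$, necessarily real, necessarily containing $\rr I$. Second, $\sqrt{\hat I}=R(\sqrt I)$: there is a bijection between the quaternion data $(X,v)\in(\HH^{n\times n})^g\times\HH^{\ell n}$ and the real data $(Y,u)\in(\RR^{n\times n})^{4g}\times\RR^{4\ell n}$---group the $4g$ real coordinate matrices of $X$ into $g$ quaternion matrices, and take $u$ to be the real coordinate vector of $v$ (or of $\bar v$)---under which $p(X)v=0$ for $p\in I$ translates into $\hat p(Y)u=0$ for the corresponding $\hat p\in\hat I$, using that the representations $\phi$ and $\psi$ intertwine quaternion with real matrix and vector arithmetic and that the vanishing locus is stable under left and right multiplication by $\HH$. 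Hence $V(I)$ and $V(\hat I)$ correspond, and so do $\cI(V(I))$ and $\cI(V(\hat I))$.

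Granting these, Proposition~\ref{thm:lnss} over $\RR$ (in $4g$ variables) gives $\rr{\hat I}=\sqrt{\hat I}$, whence $R(\rr I)=\rr{\hat I}=\sqrt{\hat I}=R(\sqrt I)$, and injectivity of $R$ forces $\rr I=\sqrt I$; combined with the first paragraph this proves the corollary. I expect the main obstacle to be the third paragraph: upgrading Propositions~\ref{prop:realProj} and~\ref{prop:realProj2} from $\HH\axs$ to matrix modules, verifying carefully that $\rr{}$ and $\sqrt{}$ genuinely commute with $R$ (the residual-invariance argument for $\rr{}$, and for $\sqrt{}$ the bookkeeping with the conjugation $v\mapsto\bar v$, with the involution, and with the dimension multipliers $\ell\mapsto4\ell$, $g\mapsto4g$, $\HH^{\ell n}\mapsto\RR^{4\ell n}$), and keeping straight throughout why $\varphi$ is applied first---in $\HH\axs$ the variables do not commute with $\HH$, whereas in $\HH_c\azs$ they do, which is exactly what lets $\psi$ act on coefficients.
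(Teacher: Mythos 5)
Your plan is essentially the paper's proof: reduce to showing $\rr{I}=\sqrt{I}$ over $\mathbb F$ (the reduction from Proposition~\ref{thm:lnss} to Theorem~\ref{thm:mainFromNotes} carries over verbatim), transport to $\RR$ via $\psi\circ\varphi$, invoke Proposition~\ref{thm:lnss} over $\RR$, and move real zeros back to quaternion zeros by regrouping the $4g$ real coordinate matrices and reading off $v$ from $\bar u$; your observation that $\operatorname{Re}\bigl(ap(X)v\bigr)=\psi\circ\varphi(ap)(Y)u$ for all $a\in\HH$ is precisely the verification the paper uses to see that the lifted tuple lies in $V(I)$. One remark on economy: the paper never needs the full equality $\rr{\hat I}=R(\rr I)$, only the easy inclusion $\rr{\hat I}\subseteq R(\rr I)$ (because $R(\rr I)$ is a real module containing $\hat I$, by Propositions~\ref{prop:realProj} and~\ref{prop:realProj2}). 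Your reverse inclusion, via invariance of $\rr{\hat I}$ under the residual right $\HH$-action (right multiplication by the orthogonal blocks $\phi(a)^{T}$, which does preserve realness), is correct but not required for the argument: combining $R(\sqrt I)\subseteq\sqrt{\hat I}=\rr{\hat I}\subseteq R(\rr I)$ with injectivity of $R$ already forces $\sqrt I\subseteq\rr I$, and the other inclusion is automatic from Proposition~\ref{prop:cIcCIsReal}. So you can drop that paragraph; otherwise the argument and the bookkeeping are sound, and the $\CC$ case collapses exactly as you say since $\varphi$ is the identity there.
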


\begin{proof}
The $\RR$ case is Theorem \ref{thm:mainFromNotes}.
 As we saw in in that case, this result boils down to showing $\sqrt{I} =
\sqrt[\real]{I}$ for any finitely-generated left module $I \subset \mathbb{F}^{1
\times \ell}\axs$.  We will do the $\mathbb{F} = \HH$ case here; the $\CC$ case 
is
similar but easier.

Let $q \not\in \sqrt[\real]{I}$.
It suffices to show that $q \not\in \sqrt{I}$.
We see that $\psi \circ \varphi(q) \not\in \psi \circ \varphi 
(\sqrt[\real]{I})$.
Propositions \ref{prop:realProj} and \ref{prop:realProj2} imply that $ \psi
\circ \varphi (\sqrt[\real]{I})$ is real, so
\[
\psi \circ \varphi(I) \subset \sqrt[\real]{\psi \circ \varphi (I)} \subset
\psi \circ \varphi (\sqrt[\real]{I})
\]
which implies that $\psi \circ \varphi (q) \not\in \sqrt[\real]{\psi \circ
\varphi (I)}$.
Proposition \ref{thm:lnss} implies that there exists a real tuple $(Z, v) \in
V(\psi(I))$ such that $\psi \circ \varphi (q) (Z)v \neq 0$.  Express 
$\varphi(q)$ as
\[
\varphi(q) = (q_1 + i q_2 + j q_3 + k q_4, \ldots, q_{4 \ell - 3} + i q_{4 \ell 
-2 } + j
q_{4 \ell -1} + k q_{4 \ell}).
\]
Then,
\[
(\psi (q_1)(Z), \ldots, \psi (q_{4\ell})(Z))
\begin{pmatrix}
v_1\\ v_2 \\ \vdots \\ v_{4 \ell}
\end{pmatrix} = \sum_{i=1}^{4 \ell} \psi (q_i)(Z) v_i \neq 0.
\]
Let $w \in \HH^{\ell}$ be
\[
w=
\begin{pmatrix}
v_1 - i v_2 - j v_3 - k v_4\\ \vdots \\ v_{4\ell -3} - i v_{4 \ell - 2} - j v_{4
\ell - 1} - k v_{4 \ell}
\end{pmatrix}
\]
and let $X \in (\HH^{n \times n})^g$ be
\[
X = (Z_1 + i Z_2 + j Z_3 + k Z_4, \ldots, Z_{4g - 3} + i Z_{4g - 2} + j Z_{4g-1}
+ k Z_{4g}).
\]
If $r \in \HH^{1 \times \ell}\axs$, then
\begin{align}
\notag
r(X) &=  \varphi(r)(Z),
\end{align}
and further, it is easy to show that
\[
\operatorname{Re}(\varphi(r)(Z) w) = \psi \circ \varphi (r)(Z) v.
\]
Therefore
\[
\operatorname{Re}(q(X) w)
= \psi \circ \varphi (q)(X)v \neq 0.
\]
Also, if $p \in \sqrt[\real]{I}$, then for each $a \in \HH$,
\begin{align}
\notag
\operatorname{Re}(ap(X)w) &= \psi \circ \varphi (ap)(X)v = 0,\end{align}
which implies that $p(X)w = 0$.  Therefore $(X, w) \in V(I)$, which shows that
$q \not\in \sqrt{I}$.
\end{proof}

Note that a Corollary of this result is an extension of \cite[Theorem 1.6]
{chmn} to $\HH$.

\section{The Real Radical of a Left Module in \texorpdfstring{$\RR^{1 \times
\ell}\axs$}{[R 1 times ell x xs]}}
\label{sec:realRadStuff}

We now use the results of Sections \ref{sec:COrders} and
\ref{sec:DoubleCOrders} to prove a strong result, Theorem
\ref{thm:reducedRealTest}, about the real radical of a finitely-generated left
ideal.  This result is both a generalization of and an improvement upon
\cite[Corollary 2.6]{chmn}.  We prepare for the proof of Theorem
\ref{thm:reducedRealTest} with several lemmas.

\begin{lemma}
\label{lem:leadOfSquare}
  Let $\prec_0$ be a degree order on $\axs$ such if  $ a = a_1a_2$, $b =
b_1b_2$, where $|a_1| = |a_2| = |b_1|
= |b_2| = d$ for some degree $d$, then $a \prec_0 b$ if one of the following
holds:
\begin{enumerate}
 \item $a_2 \prec_0 b_2$, or
 \item $a_2 = b_2$ and $a_1 \prec_0 b_1$.
\end{enumerate}
Let $\rC \subset \RR^{1 \times \ell}\axs$ be a right chip space,
and let $\prec_{\rC}$ be a $\rC$-order induced by $\prec_0$, and let
$\prec_{\rC \times \rC}$ be a double $\rC$-order induced by $\prec_{\rC}$.
Let $p \in \RR\axs \rC \setminus \rC$.  Then the leading monomial of
$p^*p$ is $\lead{p}^*\lead{p}$, where $\lead{p}$ is the leading monomial of
$p$. Further, $\lead{p}^*\lead{p}$ has a positive coefficient in $p^*p$.
\end{lemma}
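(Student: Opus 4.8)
The plan is to analyze the leading monomial of $p^*p$ directly using the structure of double $\rC$-orders, leveraging the fact that $p \in \RR\axs \rC \setminus \rC$. Write $p = \sum_\phi a_\phi \phi$, summing over monomials $\phi$ appearing in $p$, and observe that $p^*p = \sum_{\phi, \psi} a_\phi a_\psi\, \phi^* \psi$. Each $\phi \in \RR\axs\rC$; write $\lead{p} =: \ell_p \in \RR\axs\rC \setminus \rC$ for the leading monomial with respect to $\prec_{\rC}$. The claim is that among all products $\phi^*\psi$ the monomial $\ell_p^*\ell_p$ is strictly maximal with respect to $\prec_{\rC \times \rC}$, and that no cancellation destroys its coefficient.

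First I would reduce to comparing $\phi^*\psi$ against $\ell_p^*\ell_p$ for arbitrary monomials $\phi, \psi$ of $p$. Since $p \notin \rC$, its leading monomial $\ell_p$ lies in $\RR\axs\rC \setminus \rC$, so $\ell_p^*\ell_p \in \rC^*\RR\axs\rC \setminus \rC^*\RR\axs_1\rC$ by Lemma \ref{lem:repNEW2} (in particular it is not of the degenerate type). I would split into cases according to whether $\phi$ and $\psi$ lie in $\rC$ or in $\RR\axs\rC \setminus \rC$. If both $\phi, \psi \in \RR\axs\rC \setminus \rC$, then by the defining clauses of a double $\rC$-order (clause (3), comparing the right-hand $\RR\axs_1\rC\setminus\rC$ factors and then the left factors) together with $\phi \preceq_{\rC} \ell_p$ and $\psi \preceq_{\rC} \ell_p$, one gets $\phi^*\psi \preceq_{\rC \times \rC} \ell_p^*\ell_p$, with equality forcing $\phi = \psi = \ell_p$; this is exactly the kind of argument already carried out in the proof of Lemma \ref{lem:leadCOrder2}. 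If $\phi \in \rC$ or $\psi \in \rC$, then $\phi^*\psi$ lands in a lower stratum of the double $\rC$-order — either in $(\RR^{1\times\ell}\cap\rC)^*\rC$ or in $\rC^*\RR\axs_1\rC$ — and clauses (1)–(2) put it strictly below $\ell_p^*\ell_p$; here is where the hypothesis $p \notin \rC$ is essential, since it guarantees $\ell_p \notin \rC$ so that at least one genuinely "large" term survives.

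The remaining point is that the coefficient of $\ell_p^*\ell_p$ in $p^*p$ is positive and nonzero. Since $\ell_p^*\ell_p$ is strictly greater than every other $\phi^*\psi$, the only contribution to that monomial comes from $\phi = \psi = \ell_p$, giving coefficient $a_{\ell_p}^2 > 0$ (recall $a_{\ell_p} \neq 0$ as $\ell_p$ is the leading monomial of $p$). By Lemma \ref{lem:leadOfSumGen} this confirms $\lead{p^*p} = \lead{p}^*\lead{p}$ with positive coefficient.

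The main obstacle I anticipate is the bookkeeping in the mixed and equal-degree cases: when $\phi$ and $\psi$ have the same length as $\ell_p$ but are not equal to it, one must carefully invoke the auxiliary degree order $\prec_0$ on $\axs$ (with the specific tie-breaking property assumed in the hypothesis — comparing the right half of a word before the left half) to see that the induced comparison of $\phi^*\psi$ is controlled by the $\prec_{\rC}$-comparisons of $\phi$ and $\psi$ separately. The hypothesis on $\prec_0$ is tailored precisely so that $\phi^* \psi \prec_{\rC\times\rC} \ell_p^* \ell_p$ whenever $(\phi,\psi) \neq (\ell_p,\ell_p)$; verifying that this propagates correctly through clause (3) of the double $\rC$-order definition, using the factorization into $\RR\axs_1\rC\setminus\rC$ pieces from Lemma \ref{lem:repNEW2}, is the delicate step, but it is essentially a repetition of the argument in Lemma \ref{lem:leadCOrder2} applied with $p$ in place of both factors.
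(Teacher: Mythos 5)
Two points are off. First, you misstate clause (3) of the double $\rC$-order: writing $a = a_1^*a_2$ with $a_2 \in \RR\axs_1\rC\setminus\rC$ and $a_1 \in \RR\axs\rC$, the definition compares $a_1 \prec_{\rC} b_1$ \emph{first}, and only if $a_1 = b_1$ does it fall back to $a_2 \prec_{\rC} b_2$ --- not ``the right-hand factors and then the left factors'' as you describe. This is the same asymmetry that drives Lemma \ref{lem:leadCOrder2}, so the misreading is not cosmetic.

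Second, and more substantively, the step where you assert that $\phi, \psi \preceq_{\rC} \lead{p}$ forces $\phi^*\psi \preceq_{\rC\times\rC} \lead{p}^*\lead{p}$ is the entire content of the lemma and does \emph{not} follow ``exactly as in Lemma \ref{lem:leadCOrder2},'' because that lemma assumes one factor lies in $\RR\axs_1\rC$, i.e.\ has trivial left word $w$. Here, writing $\phi = w_\phi m_\phi$ and $\psi = w_\psi m_\psi$ via Lemma \ref{lem:repRRaxsrC}, the $a_1$-factor of $\phi^*\psi$ is $w_\psi^*\, w_\phi\, m_\phi$, and comparing it against $w_*^*\, w_*\, m_*$ in the $\rC$-order reduces to comparing the words $w_\psi^* w_\phi$ and $w_*^* w_*$ under $\prec_0$. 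This is exactly where the special hypothesis on $\prec_0$ enters, and it is delicate: when $|w_\phi| = |w_\psi| = |w_*|$, the hypothesis compares $w_\phi$ against $w_*$ first and then $w_\psi^*$ against $w_*^*$. While $\phi \preceq_{\rC} \lead{p}$ controls the first comparison, $\psi \preceq_{\rC} \lead{p}$ gives information about $w_\psi$ versus $w_*$, not about $w_\psi^*$ versus $w_*^*$ --- turning one into the other is precisely what needs an argument. The paper handles this by first isolating the monomials of $p^*p$ whose Lemma \ref{lem:repNEW2} middle word has \emph{maximal} length $2d$, where $d$ is the maximal left-word length appearing in $p$; these are exactly the products with $|w_\phi| = |w_\psi| = d$, and each such monomial arises from a unique ordered pair $(\phi,\psi)$, which is also what justifies that the coefficient of $\lead{p}^*\lead{p}$ is the square of the leading coefficient of $p$. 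Your proposal names the $\prec_0$-hypothesis as the ``delicate step'' but neither carries it out nor performs the maximal-middle-word reduction; without that reduction the uniqueness of the pair producing $\lead{p}^*\lead{p}$, and hence the positivity of its coefficient, is also unjustified.
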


\begin{proof}
 Let $p$ be decomposed as
\[p = \sum_{w \in \axs}\sum_{m \in \RR\axs \rC \setminus \rC} A(w,m) wm +
\bar{p},\]
where $A(w,m) \in \RR$ and $\bar{p} \in \rC$.
Since $p \in \RR\axs \rC \setminus \rC$ by assumption, at least some of the
$A(w,m)$ are nonzero.
Let $d$ be the maximum length of any $w$ such that $A(w,m) \neq 0$ for any $m$.

For any $m_1,m_2 \in \RR\axs \rC
\setminus \rC$ and $w_1, w_2 \in \axs$ the representation $(w_1 m_1)^*(w_2 m_2)
= m_1^*(w_1^*w_2)m_2$ is the unique representation given by Lemma
\ref{lem:repNEW2} with left and right factors
being in $\RR\axs_1 \rC \setminus \rC$.

If $c \in \rC$, $m \in \RR\axs \rC \setminus \rC$, and $|w| \leq d$, then
either $m^*w^*c$ and $c^*wm$ are in $\rC^*\RR\axs_1 \rC$, or $m^*w^*c =
m^*w_2^*(w_1^*c)$ and $c^*wm = (w_1^*c)^*w_2m$, where $w = w_1w_2$ and $w_1^*c
\in \RR\axs \rC \setminus \rC$, with $|w_1| < |w|$.
Therefore, the terms of
$p^*p$ in $\rC^*\RR\axs \rC \setminus \rC^*\RR\axs_1 \rC$ whose representation
given by Lemma \ref{lem:repNEW2} has a middle word of maximum length
are those of the form $m_1^*w_1^*w_2m_2$ with $m_1, m_2 \in \RR\axs_1 \rC 
\setminus \rC$ and $|w_1| = |w_2| = d$.
Each such term of $p$ is uniquely a product of $(w_1m_1)^*$ and $w_2m_2$ since
$d$
is
the maximum length of the leftmost word $w_1$ and $w_2$ in $p$.

In $p^*p$, the monomial
$m_1^*w_1^*w_2m_2$ has coefficient $A(w_1,m_1)A(w_2,m_2)$, and hence is
nonzero if and only if both $A(w_1,m_1)$ and $A(w_2,m_2)$ are nonzero.  Let
$w_*m_* = \lead{p}$ be
the leading monomial of $p$.
If $A(w_1,m_1), A(w_2,m_2) \neq 0$, we have $w_1m_1,w_2m_2 \preceq_{\rC}
w_*m_*$.  Then clearly $m_1^*w_1^*w_2m_2 \prec_{\rC
\times \rC} m_*^*w_*^*w_*m_*$.
Therefore, the leading monomial of $p^*p$ is $\lead{p}^*\lead{p}$.
Further, the coefficient of $\lead{p}^*\lead{p}$ is $A(m_*w_*)^2 > 0$.
\end{proof}

\begin{lemma}
\label{lem:sosInIC}
 Let $\rC \subset \RR^{1 \times \ell}\axs$ be a finite right chip space.
Let $I \subset \RR^{1 \times \ell}\axs$ be a left
module
generated by polynomials in $\RR\axs_1 \rC$.
If
\begin{equation}
\label{eq:hereASquare}
 \sum_i^{\finite} p_i^*p_i \in \RR^{\ell \times 1} I + I^* \RR^{1 \times
\ell},
\end{equation}
for some $p_i \in \RR^{1 \times \ell}\axs$,
then each $p_i \in I + \rC$.
\end{lemma}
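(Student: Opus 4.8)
The plan is to reduce to a leading-monomial argument using the machinery of $\rC$-bases and double $\rC$-orders developed in Sections \ref{sec:COrders}--\ref{sec:DoubleCOrders}. First I would fix a degree order $\prec_0$ on $\axs$ of the special type required by Lemma \ref{lem:leadOfSquare}, and let $\prec_{\rC}$ be a $\rC$-order induced by $\prec_0$ and $\prec_{\rC \times \rC}$ a double $\rC$-order induced by $\prec_{\rC}$. Since each $p_i$ decomposes uniquely as $p_i = \pi_i + \rho_i$ with $\pi_i \in \RR\axs_1 \rC$ (in fact in $\RR\axs\rC$) and $\rho_i \in \Theta$ (the complement from Lemma \ref{lem:noTheta}), and since the right-hand side of (\ref{eq:hereASquare}) lies in $\rC^*\RR\axs\rC$, I would first want to argue that only the $\rC\axs$-components matter. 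Concretely, I would show that it suffices to treat the case where every $p_i \in \RR\axs \rC$: the cross terms and $\Theta^*\Theta$-terms produced by the $\rho_i$ must cancel against each other by the direct-sum decomposition (\ref{eq:oplusCTh}), reducing the problem to showing $\pi_i \in I + \rC$, i.e. that it suffices to prove the statement for $p_i \in \RR\axs\rC$.

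The heart of the argument is then an induction on the maximal degree $D$ appearing among the $p_i$ outside of $I + \rC$. Suppose for contradiction that some $p_i \notin I + \rC$; choose the reindexing so that $p_1, \ldots, p_r \notin I+\rC$ and the rest are in $I + \rC$ (the latter contribute terms in $\RR^{\ell\times 1}I + I^*\RR^{1\times\ell}$ plus squares of elements of $\rC$, which I can absorb). Using a $\rC$-basis $(\{\iota_i\}, \{\vartheta_j\})$ for $I \cap \RR\axs_1\rC$ and Lemma \ref{lem:niceCBasis2}, reduce each $p_i$ modulo $I$ so that $p_i - (\text{element of } I)$ is supported on $\nonlead{I} \cap \RR\axs_1\rC$ together with $\rC$; write $p_i = \iota_i' + q_i$ with $\iota_i' \in I$ and $q_i$ so supported. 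Replacing $p_i$ by $q_i$ changes $\sum p_i^*p_i$ only by an element of $\RR^{\ell\times1}I + I^*\RR^{1\times\ell}$, so we may assume each $p_i$ itself is supported on $\nonlead{I}$-monomials plus $\rC$, and $q_i \notin \rC$ for at least one $i$. Let $m = \lead{p_j}$ be the $\prec_{\rC}$-largest leading monomial among those $p_i \notin \rC$; it lies in $\RR\axs\rC \setminus \rC$, hence in $\nonlead{I}$. By Lemma \ref{lem:leadOfSquare}, $\lead{p_i^*p_i} = \lead{p_i}^*\lead{p_i}$ with positive coefficient, and by the definition of the double $\rC$-order these terms $\lead{p_i}^*\lead{p_i}$ cannot cancel — the largest among them, $m^*m$, survives in $\sum p_i^*p_i$ with a positive coefficient (this is exactly the Gram-matrix-diagonal positivity trick: distinct leading monomials force the squared leading terms to appear on the "diagonal" and not be killed).

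The contradiction is then that $m^*m \in \rC^*\RR\axs\rC \setminus \rC^*\RR\axs_1\rC$ must be the leading monomial of an element of $\RR^{\ell\times1}I + I^*\RR^{1\times\ell}$ (after subtracting off the square-of-$\rC$ parts, which live in $\rC^*\RR\axs_1\rC$ and are $\prec_{\rC\times\rC}$-smaller, using condition (2) in the definition of a double $\rC$-order together with Lemma \ref{lem:repNEW2}). But by Lemma \ref{lem:niceCBasis2}(2), the leading monomial of any element of $(\RR^{\ell\times1}I + I^*\RR^{1\times\ell}) \cap \rC^*\RR\axs\rC$ that is not in $\rC^*\RR\axs_1\rC$ has the form $n^*\lead{\iota_i}$ or $\lead{\iota_i}^*n$ for some $\rC$-basis element $\iota_i$ and some monomial $n \in \RR\axs\rC\setminus\rC$; in particular both of the "outer" factors cannot simultaneously be a $\nonlead{I}$-monomial, whereas $m^*m$ has both outer factors equal to $m \in \nonlead{I}$. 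This is the contradiction, forcing $r = 0$, i.e. each $p_i \in I + \rC$.

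I expect the main obstacle to be the bookkeeping in the reduction step: carefully tracking that subtracting an element of $I$ from each $p_i$, and discarding the $\Theta$-components and the $\rC$-square contributions, genuinely leaves the membership (\ref{eq:hereASquare}) intact (this uses Lemma \ref{lem:noTheta} and the decomposition (\ref{eq:oplusCTh}) in an essential way) and leaves the remaining $p_i$ with \emph{distinct} leading monomials so that Lemma \ref{lem:leadOfSquare}'s positivity conclusion applies without cancellation. The leading-monomial contradiction itself, once set up, is then immediate from Lemma \ref{lem:niceCBasis2} and Lemma \ref{lem:repNEW2}.
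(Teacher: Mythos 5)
Your plan follows essentially the same route as the paper's proof: first use the decomposition $\RR^{1\times\ell}\axs = \RR\axs\rC \oplus \Theta$ together with (\ref{eq:oplusCTh}) to force each $p_i\in\RR\axs\rC$ (the paper concludes $\theta_i=0$ outright from $\sum_i\theta_i^*\theta_i=0$, rather than "cancellation"), and then combine Lemma \ref{lem:leadOfSquare} with Lemma \ref{lem:niceCBasis2}(2) to pin down the leading monomial $\lead{p_{i_*}}^*\lead{p_{i_*}}$ of the sum of squares as $m^*\lead{\iota}$ or $\lead{\iota}^*m$, whence the $\RR\axs_1\rC\setminus\rC$ right chip $\bar p_{i_*}$ of $\lead{p_{i_*}}$ must equal $\lead{\iota}$. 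The only real difference is organizational: you pre-reduce each $p_i$ modulo $I$ and derive a single contradiction, while the paper subtracts $A\,u\iota$ from $p_{i_*}$ and repeats, both being the same well-ordered descent on leading monomials. Two details to tighten: after reduction the $q_i$ are not supported on $\nonlead{I}\cap\RR\axs_1\rC + \rC$ (their degree is unbounded) --- what you need and can get is only $\lead{q_i}\notin\lead{I}$; and the "both outer factors" contradiction should be run through the unique factorization of Lemma \ref{lem:repNEW2}, which gives $\bar p_{i_*}=\lead{\iota}$ and hence $\lead{q_{i_*}}=u\bar p_{i_*}=\lead{u\iota}\in\lead{I}$, contradicting the reduction.
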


\begin{proof}
Suppose (\ref{eq:hereASquare}) holds.
Let $\Theta$ be the space
\[\Theta = \sum_{j \not\in \Gamma(\rC)} e_j \otimes \RR\axs,\]
so that, by Lemma \ref{lem:gammaC}, $\RR^{1 \times \ell}\axs = \RR\axs \rC
\oplus \Theta$.  Let $p_i = c_i + \theta_i$ for each $i$, where $c_i \in
\RR\axs \rC$ and $\theta_i \in \Theta$.  We see that
\[\sum_i^{\finite} p_i^*p_i
= \sum_i^{\finite} c_i^*c_i + \sum_i^{\finite} c_i^*\theta_i
+ \sum_i^{\finite} \theta_i^*c_i + \sum_i^{\finite} \theta_i^*\theta_i.\]
Since (\ref{eq:oplusCTh}) holds and $I \subset \rC$, it must be that
$\sum_i \theta_i^*\theta_i = 0$, which can only occur if each $\theta_i = 0$.
Therefore each $p_i \in \RR\axs \rC$.

Given a polynomial $p_i$, either $p_i \in \rC$ or $p_i \in 
\RR\axs \setminus \rC$ and, by Lemma \ref{lem:leadOfSquare}, $\lead{p_i^*p_i} = 
\lead{p_i}^*\lead{p_i}$.
In the latter case, the leading monomial of
$\sum_i p_i^*p_i$ must
be the
maximal
$\lead{p_i}^*\lead{p_i}$
since the leading monomials of the $p_i^*p_i$, with $p_i \not\in \rC$, cannot
cancel each other
because by Lemma \ref{lem:leadOfSquare} they all have positive coefficients.
Let $p_{i^*}$ such that $\lead{p_{i_*}}$ is maximal.
By Lemma \ref{lem:niceCBasis2}, the leading
monomial $\lead{p_{i_*}}^*\lead{p_{i_*}}$
is of the form $m^*\lead{\iota}$ or $\lead{\iota}^*m$, for some $\iota \in I
\cap \RR\axs_1 \rC \setminus \rC$ and $m \in \RR\axs \rC \setminus \rC$.
Since $\lead{p_{i_*}} \not\in \rC$, decompose $\lead{p_{i_*}}$ by Lemma
\ref{lem:repRRaxsrC} as $u
\bar{p}$, where $u \in \axs$ and $\bar{p} \in \RR\axs_1 \rC \setminus
\rC$.
By Lemma \ref{lem:canFactorMon}, we must have $\bar{p} = \lead{\iota}$.
Let $A$ be the coefficient of $\lead{p_{i_*}}$ in $p_{i_*}$.
Then
\[ \sum_{i \neq i_*} p_i^*p_i + (p_{i_*} - A u
\iota)^*(p_{i_*} - A u \iota) \in  \RR^{\ell \times 1} I + I^* \RR^{1 \times
\ell},\]
and further, $p_{i_*} - A u \iota$ has a smaller leading monomial than $p_{i_*}$
does,
and $p_{i_*} \in I + \rC$ if and only if $p_{i_*} - A u \iota \in I + \rC$.

We repeat this process inductively
to show that each $p_i \in I + \rC$.
\end{proof}

The following theorem is a key result in computing the
real radical of a finitely-generated left module.  This result is a
generalization of \cite[Corollary 2.6]{chmn} to $\RR^{1 \times \ell}\axs$.
Further, the following result gives
is more refined than \cite[Corollary 2.6]{chmn} for verifying whether or not
a left module is real.

\begin{theorem}
\label{thm:reducedRealTest}
 Let $\rC \subset \RR^{1 \times \ell}\axs$ be a
finite right chip space, and let $I \subset \RR^{1 \times \ell}\axs$ be a
left module generated by polynomials in
$\RR\axs_1\rC$.
 Let $(\{\iota_i\}_{i=1}^{\mu},
\{\vartheta_j\}_{j=1}^{\sigma})$ be
a $\rC$-basis for $I$.
Then $I$ is real if and only if whenever
\begin{equation}
 \label{eq:reducedRealTest}
\sum_i^{\finite} p_i^*p_i = \sum_{j=1}^{\mu} (q_j \iota_j + \iota_j^*q_j^*)
+ \sum_{k=1}^{\sigma} (\alpha_k^* \vartheta_k + \vartheta_k^*\alpha_k),
\end{equation}
for some $p_i, q_j \in \rC$ and $\alpha_k \in \RR^{1 \times \ell} \cap \rC$,
then each $p_i \in I$.
\end{theorem}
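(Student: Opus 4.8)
For the equivalence, the plan is that the forward implication is immediate from the definition while the reverse implication carries all the content. For the forward implication, suppose $I$ is real and that (\ref{eq:reducedRealTest}) holds. Every summand on its right-hand side lies in $\RR^{\ell\times 1}I+I^*\RR^{1\times\ell}$, since every $\iota_j$ and every $\vartheta_k$ lies in $I$ and, $I$ being a left module (so $I^*$ a right module), products against these have all rows in $I$ or all columns in $I^*$. Hence $\sum_i p_i^*p_i\in\RR^{\ell\times 1}I+I^*\RR^{1\times\ell}$, and realness of $I$ forces each $p_i\in I$.

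For the reverse implication, assume the reduced test holds and take an arbitrary relation $\sum_i p_i^*p_i\in\RR^{\ell\times 1}I+I^*\RR^{1\times\ell}$ with $p_i\in\RR^{1\times\ell}\axs$ of any degree; I must show each $p_i\in I$. First I would apply Lemma \ref{lem:sosInIC}, which already confines the summands to $I+\rC$: write $p_i=\iota_{(i)}+c_i$ with $\iota_{(i)}\in I$ and $c_i\in\rC$. Expanding $p_i^*p_i=\iota_{(i)}^*\iota_{(i)}+\iota_{(i)}^*c_i+c_i^*\iota_{(i)}+c_i^*c_i$ and observing that each of the first three terms lies in $\RR^{\ell\times 1}I+I^*\RR^{1\times\ell}$ by the same left/right-module reasoning as above, I may peel them off to obtain $\sum_i c_i^*c_i\in\RR^{\ell\times 1}I+I^*\RR^{1\times\ell}$, now with every $c_i\in\rC$.

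Next I would fix the $\rC$-order underlying the given $\rC$-basis together with a compatible double $\rC$-order, and put $\sum_i c_i^*c_i$ into reduced form. Since $1\in\RR\axs_1$, we have $\sum_i c_i^*c_i\in\rC^*\rC\subseteq\rC^*\RR\axs_1\rC\subseteq\rC^*\RR\axs\rC$, so Lemma \ref{lem:noTheta} places it in $\rC^*I+I^*\rC$ and Lemma \ref{lem:niceCBasis2}(1) gives a representation of the shape (\ref{eq:repOfIIstar}). Because $\sum_i c_i^*c_i$ actually lies in $\rC^*\RR\axs_1\rC$, part (\ref{it:canRedC}) of Lemma \ref{lem:niceCBasis2} forces all the coefficients $p_{ij},q_{ij},r_{ij}$ to vanish, so the representation collapses to $\sum_i(s_i^*\iota_i+\iota_i^*t_i)+\sum_k(\alpha_k^*\vartheta_k+\vartheta_k^*\beta_k)$ with $s_i,t_i\in\rC$ and $\alpha_k,\beta_k\in\RR^{1\times\ell}\cap\rC$. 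Averaging this identity with its own adjoint (legitimate since $\sum_i c_i^*c_i$ is self-adjoint) replaces $s_i,t_i$ by $\tfrac{1}{2}(s_i+t_i)$ and $\alpha_k,\beta_k$ by $\tfrac{1}{2}(\alpha_k+\beta_k)$, producing an expression of exactly the form (\ref{eq:reducedRealTest}) with the polynomials $p_i$ there taken to be the $c_i$. The reduced-test hypothesis then yields each $c_i\in I$, whence $p_i=\iota_{(i)}+c_i\in I$, proving $I$ real.

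The step I expect to be the crux is the passage from an arbitrary sum-of-Hermitian-squares relation to the stripped-down shape (\ref{eq:reducedRealTest}); the two facts that make it go through are Lemma \ref{lem:sosInIC}, which already pins the summands into $I+\rC$, and the observation that a sum of Hermitian squares of elements of $\rC$ automatically lives in the low-degree layer $\rC^*\RR\axs_1\rC$, which is precisely the hypothesis that licenses Lemma \ref{lem:niceCBasis2}(\ref{it:canRedC}) to discard the bilinear $\iota_i^*p_{ij}^*\iota_j$ terms and the $\tau$-terms. The remaining ingredients—the forward implication, the expansion of $p_i^*p_i$, and the final symmetrization—are routine bookkeeping with the module structure.
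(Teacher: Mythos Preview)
Your proposal is correct and follows essentially the same route as the paper: reduce via Lemma \ref{lem:sosInIC} to summands in $I+\rC$, pass to the $\rC$-part, then invoke Lemma \ref{lem:niceCBasis2}(\ref{it:canRedC}) together with the symmetry of a sum of Hermitian squares to obtain the shape (\ref{eq:reducedRealTest}). You have simply spelled out in full the symmetrization step that the paper compresses into the phrase ``noting that a sum of squares is necessarily symmetric,'' and made explicit the intermediate appeal to Lemma \ref{lem:noTheta} (which the paper absorbs into the statement of Lemma \ref{lem:niceCBasis2}).
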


Note that Theorem \ref{thm:reducedRealTest} implies that to test whether a
left module $I \subset \RR^{1 \times \ell}\axs$ is real, given that $I$ is
generated by polynomials in $\RR\axs_1 \rC$ for some right chip space $\rC$, one
needs only verify whether given some polynomials $p_i \in \rC$ such that
\[
 \sum_i^{\finite} p_i^*p_i \in \RR^{\ell \times 1} I + I^* \RR^{1 \times \ell}
\]
if each $p_i$ must be in $I$.

\begin{proof}
Suppose
\[\sum_i^{\finite} p_i^*p_i \in \RR^{\ell \times 1}I + I^*\RR^{1 \times \ell}.\]
Lemma \ref{lem:sosInIC} implies that each $p_i$ is of the form $p_i = \phi_i +
\psi_i$, where $\phi_i \in I$ and $\psi_i \in \rC$.  Therefore
\[\sum_i^{\finite} \psi_i^*\psi_i \in \RR^{\ell \times 1}I + I^*\RR^{1 \times
\ell},\]
and so $I$ is real if and only if we must have each $\psi_i \in I$.
We get the righthand side of (\ref{eq:reducedRealTest}) from Lemma
\ref{lem:niceCBasis2}
(\ref{it:canRedC}), noting that a sum of squares is necessarily
symmetric.
\end{proof}

Note that Theorem \ref{thm:reducedRealTest} implies degree bounds.  For
example, if $I \subset \RR^{1 \times \ell}\axs$ is generated by polynomials of
degree bounded by $d$, one could use $\rC = \RR^{1 \times \ell}\axs_{d-1}$.
However, Theorem \ref{thm:reducedRealTest} is more refined since in many cases
there exists a smaller right chip space $\rC$ such that $I$ is generated by
polynomials in $\RR\axs_1 \rC$.

\section{Left Gr\"obner Bases}
\label{sect:LGB}

Classically, Gr\"obner bases are used to verify whether a given polynomial $p$
belongs to a given ideal $I$.
We need left Gr\"obner bases for verifying membership in left modules $I
\subset \RR^{1 \times \ell}\axs$; specifically, we will need them for the Real
Radical Algorithm in $\S$ \ref{sub:rralg}.

Fortunately, there is a general theory of one-sided Gr\"obner bases for
one-sided modules
with coherent bases over algebras with ordered multiplicative basis
\cite{Gre00}.
In this section, we give a version of this theory specific to our case and tie
it in with the $\rC$-basis theory previously presented.
 Left Gr\"obner bases
are easily
computable and are used to algorithmically determine membership in a left
module.

A {\bf left admissible order} $\prec$ on $\axs$ is a well order on $\axs$ such
that $a \prec b$ for some $a, b \in \axs$ implies that for each $c \in \axs$ we
have $ca \prec cb$.
Given a left module $I \subset \RR^{1 \times
\ell}\axs$, a subset $\cG \subset I$
is a {\bf left Gr\"obner basis of $I$ with respect to $\prec$}\index{left
Grobner basis} if the left module generated by $\lead{\cG}$ equals the left
module generated by $\lead{I}$.
We say a polynomial $p$ is {\bf monic} if the coefficient of $\lead{p}$ in $p$
is $1$.
We say a left Gr\"obner basis $\cG$ is {\bf reduced} if the following hold:
\begin{enumerate}
\item Every element of $\cG$ is monic.
 \item If $\iota_1, \iota_2 \in \cG$, then $\lead{\iota_1}$ does not divide any
of the terms of $\iota_2$ on the right.
\end{enumerate}

\begin{prop}
 Let $I \subset \RR^{1 \times \ell}\axs$ be a left module and let $\prec$ be a
left admissible order. Then
 \begin{enumerate}
  \item There is a left Gr\"obner basis for $I$ with respect to $\prec$.
  \item There is a unique reduced left Gr\"obner basis for $I$ with respect to
$\prec$.
  \item If $\cG$ is a left Gr\"obner basis for $I$ with respect to $\prec$,
then $\cG$ generates $I$ as a left module.
 \item $\RR^{1 \times \ell} = I \oplus \operatorname{Span} \left( \nonlead{I}
\right)$.
 \end{enumerate}
\end{prop}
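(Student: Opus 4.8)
The plan is to develop the one-sided Gröbner theory in the standard way, with the well-ordering of a left admissible order $\prec$ playing the role usually played by Noetherianity (which fails badly in $\RR\axs$). I will use repeatedly that for a set of monomials $S\subseteq\cM^{1\times\ell}$ the left module $\RR\axs\,S$ is spanned by the monomials $ws$ with $w\in\axs$, $s\in S$, so a monomial $u$ lies in $\RR\axs\,S$ exactly when some element of $S$ right divides $u$; in particular every chain of proper right divisions in $\axs$ strictly decreases length and hence terminates. \emph{Part (1).} For existence, choose for each $m\in\lead{I}$ one monic $g_m\in I$ with $\lead{g_m}=m$, and set $\cG_0=\{g_m\}\subseteq I$; then $\lead{\cG_0}=\lead{I}$ as sets, so the left modules generated by $\lead{\cG_0}$ and $\lead{I}$ are literally equal and $\cG_0$ is a left Gröbner basis. \emph{Part (3).} Let $\cG$ be any left Gröbner basis of $I$ and $0\neq p\in I$. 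Since $\lead{p}\in\lead{I}\subseteq\RR\axs\,\lead{\cG}$, we have $\lead{p}=w\,\lead{g}$ for some $g\in\cG$, $w\in\axs$; if $c$ is the coefficient of $\lead{p}$ in $p$, then left admissibility gives $\lead{wg}=w\,\lead{g}=\lead{p}$ with coefficient $c$, so $p':=p-c\,wg\in I$ satisfies $\lead{p'}\prec\lead{p}$ or $p'=0$. Iterating yields a strictly $\prec$-decreasing sequence of leading monomials, which is finite because $\prec$ is a well order; the process therefore reaches $0$, and unwinding it writes $p$ as a finite left $\RR\axs$-combination of elements of $\cG$. Hence $\cG$ generates $I$.

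\emph{Part (4).} If $0\neq p\in I\cap\operatorname{Span}(\nonlead{I})$, then $\lead{p}\in\lead{I}$ while at the same time every monomial of $p$, in particular $\lead{p}$, lies in $\nonlead{I}=\cM^{1\times\ell}\setminus\lead{I}$ --- impossible; so $I\cap\operatorname{Span}(\nonlead{I})=\{0\}$. For the sum, take $\cG_0$ as in (1) and, given an arbitrary $p\in\RR^{1\times\ell}\axs$, run the reduction step of (3) applied to \emph{any} term of the current polynomial whose monomial lies in $\lead{I}$: each such step subtracts an element of $I$ and replaces the chosen monomial by strictly $\prec$-smaller monomials. The usual multiset argument, again using that $\prec$ is a well order, shows this terminates at a polynomial $r$ all of whose monomials lie in $\nonlead{I}$, with $p-r\in I$. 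Hence $\RR^{1\times\ell}\axs=I+\operatorname{Span}(\nonlead{I})$, and the sum is direct by the previous sentence.

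\emph{Part (2).} Let $M\subseteq\lead{I}$ be the set of monomials of $\lead{I}$ that are not properly right divided by any other monomial of $\lead{I}$; then $M$ depends only on $(I,\prec)$, it is a right-division antichain, and $\RR\axs\,M=\RR\axs\,\lead{I}$ since chains of proper right divisions terminate. A short argument shows that \emph{any} antichain $A\subseteq\lead{I}$ with $\RR\axs\,A=\RR\axs\,\lead{I}$ must equal $M$. Now if $\cG$ is a reduced left Gröbner basis, the reducedness conditions force that distinct elements of $\cG$ have distinct leading monomials, that no $\lead{g_1}$ properly right divides $\lead{g_2}$ (so $\lead{\cG}$ is such an antichain and hence $\lead{\cG}=M$), and that every non-leading monomial of every $g\in\cG$ lies in $\nonlead{I}$. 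Given two reduced left Gröbner bases $\cG,\cG'$, we get $\lead{\cG}=M=\lead{\cG'}$; for each $m\in M$ pick monic $g\in\cG$, $g'\in\cG'$ with $\lead{g}=\lead{g'}=m$. Then $g-g'\in I$ and every monomial of $g-g'$ lies in $\nonlead{I}$ (leading terms cancel; the rest are non-leading monomials of $g$ or of $g'$), so $g-g'=0$ by (4). Hence $\cG=\cG'$. Existence of a reduced left Gröbner basis follows by starting from $\cG_0$, keeping only the generators with leading monomial in $M$, and replacing each by its normal form modulo the others via the reduction of (3)--(4), which terminates for the same reason.

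The main obstacle is purely the termination of these reduction procedures: one must verify that left admissibility makes each reduction strictly decrease the relevant leading datum, and --- since the variable set is infinite and $\RR\axs$ is not Noetherian --- that it is exactly the well-ordering of $\prec$, rather than any chain condition, which guarantees finitely many steps; for (4) one additionally needs the standard well-ordering of finite multisets to handle reducing interior terms and not just the leading one.
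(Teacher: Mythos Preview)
Your proof is correct and is in fact considerably more than what the paper itself provides: the paper's proof of this proposition is a one-line citation, ``See \cite[Propositions 4.2, 4.4]{Gre00},'' with no argument given. So there is no meaningful comparison of strategies to make; you have supplied a self-contained proof where the paper defers entirely to the literature.

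A few small remarks. In your closing paragraph you write that ``the variable set is infinite''; in this paper $x=(x_1,\dots,x_g)$ is finite, so $\RR\axs$ has $2g$ generators. Your real point---that $\RR\axs$ is not left Noetherian, so termination must come from the well-order and not from any ascending chain condition---is correct and worth keeping, but the stated reason should be adjusted. In part (2), when you assert that every non-leading monomial of a reduced basis element $g$ lies in $\nonlead{I}$, the reducedness hypothesis as written in the paper only forbids right-division by $\lead{g'}$ for $g'\neq g$; to rule out right-division by $\lead{g}$ itself you need the observation that $w\,\lead{g}\prec\lead{g}$ for $w\neq 1$ would yield an infinite descending chain $w^k\lead{g}$, contradicting the well-order. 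This is implicit in your setup but worth one sentence. Finally, your existence argument for the reduced basis (``replacing each by its normal form modulo the others'') is correct but compressed: one should note that reducing the tail of $g_m$ only introduces monomials strictly below $m$, so the leading term survives.
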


\begin{proof}
 See \cite[Propositions 4.2, 4.4]{Gre00}.
\end{proof}

\begin{lemma}
\label{lem:lGBVerMemOG}
Let $I \subset \RR^{\nu \times \ell}\axs$ be a left module and let
$\{\iota_i\}_{i\in \alpha}$ be a left Gr\"obner basis for $I$.
Every element $p \in I$ can be expressed
uniquely as
\begin{equation}
 \label{eq:eltsOfI}
p = \sum_{i}^{\finite} q_i \iota_i,
\end{equation}
for some $q_i \in \RR\axs$.
In particular,
the leading monomial of $p$ is divisible on the
right by the leading monomial of one of the left Gr\"obner basis elements
$\iota_i$.
\end{lemma}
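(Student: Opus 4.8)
The plan is to run the standard one-sided Gr\"obner division procedure: existence of a representation (\ref{eq:eltsOfI}) and the divisibility assertion in the last sentence both fall out of the division algorithm, while uniqueness is the one point that needs more than the bare definition.

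\textbf{Existence and the divisibility clause.} First I would record the elementary observation that, for any set $M$ of monomials in $\RR^{\nu\times\ell}\axs$, the left $\RR\axs$-submodule it generates is the $\RR$-span of $\{wm : w\in\axs,\ m\in M\}$; since these are again monomials and distinct monomials are linearly independent, a monomial lies in that submodule exactly when it is right-divisible by some $m\in M$ — this is where the unique factorization of monomials (Lemma \ref{lem:canFactorMon}) enters. Next, because $\prec$ is a left admissible order, the leading monomial of a product $q\iota$ with $0\neq q\in\RR\axs$ is $\lead{q}\,\lead{\iota}$, directly from the defining property $a\prec b\Rightarrow ca\prec cb$. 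Now take $p\in I\setminus\{0\}$. Then $\lead{p}$ lies in the left module generated by $\lead{I}$, which by the Gr\"obner hypothesis equals the one generated by $\lead{\cG}$; hence $\lead{p}=w\,\lead{\iota_i}$ for some index $i$ and some $w\in\axs$, which is precisely the last sentence of the lemma. Subtracting $c\,w\,\iota_i$, where $c$ is the coefficient of $\lead{p}$ in $p$, yields an element of $I$ with strictly smaller leading monomial; since a left admissible order is a well order, iterating terminates after finitely many steps, and collecting the left multiples used gives $p=\sum_i^{\finite}q_i\iota_i$ with $q_i\in\RR\axs$.

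\textbf{Uniqueness.} I expect this to be the main obstacle, because it is exactly where one must use the finer structure of the (reduced) Gr\"obner basis rather than just the definition. Given two representations, subtract them to get a syzygy $\sum_i r_i\iota_i=0$ with $r_i\in\RR\axs$, and suppose some $r_i\neq0$. Using $\lead{r_i\iota_i}=\lead{r_i}\,\lead{\iota_i}$ again, let $m$ be the $\prec$-largest monomial among the $\lead{r_i}\,\lead{\iota_i}$ that actually occur. For the sum to vanish, $m$ must be attained by at least two distinct indices $i\neq j$, so $\lead{r_i}\,\lead{\iota_i}=\lead{r_j}\,\lead{\iota_j}$; by Lemma \ref{lem:canFactorMon} one of $\lead{\iota_i}$, $\lead{\iota_j}$ then right-divides the other. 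For a reduced left Gr\"obner basis this is impossible, forcing every $r_i=0$. If the basis is not assumed reduced, the uniqueness statement is the one that should instead be quoted from the general one-sided Gr\"obner theory over algebras with an ordered multiplicative basis \cite{Gre00}, exactly as the preceding proposition was.
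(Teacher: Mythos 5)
Your proof is correct, and it reaches the same central obstruction as the paper—two terms $q_i\iota_i$ and $q_j\iota_j$ could only share a leading monomial if one $\lead{\iota_i}$ right-divides another, which reducedness forbids—but it packages the argument differently. The paper takes existence for granted (a left Gr\"obner basis generates $I$, quoted from \cite{Gre00}), then establishes in one pass that the leading monomials of the nonzero $q_i\iota_i$ are pairwise distinct; both the divisibility clause and uniqueness then fall out of Lemma \ref{lem:leadOfSumGen} applied to that distinctness. You instead run the division algorithm explicitly, which re-derives generatedness from the well-ordering of $\prec$, get the divisibility clause for free from the first reduction step, and then prove uniqueness separately by a syzygy argument. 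The syzygy argument is the same distinctness computation as the paper's, just phrased as ``the maximal monomial in $\sum r_i\iota_i=0$ must appear at two indices.'' What your write-up buys is transparency on two points the paper elides: your division algorithm makes the existence step self-contained rather than a citation, and you explicitly observe that the no-cancellation step uses the \emph{reduced} hypothesis---the paper's phrase ``contradicts the properties of the left Gr\"obner basis'' is only literally correct for a reduced basis, and you are right to note that the general case must fall back on \cite{Gre00}.
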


\begin{proof}
Since $\{\iota_i\}_{i\in \alpha}$ generates $I$,
every element $p \in I$ can be expressed as (\ref{eq:eltsOfI}).
Consider the leading monomial of such a $p$.
%Without loss of generality, assume that each $\iota_i$ is monic.
 If $a$ is a monomial such that $a \prec \lead{\iota_i}$, then for each $r \in
\axs$,
we have $ra \prec r\lead{\iota_i}$.
Therefore the leading monomial of each $q_i \iota_i$ is of the form
$\tilde{q}_i\lead{\iota_i}$, where $\tilde{q}_i \in \axs$ is some monomial
appearing in $q_i$.

Suppose $\tilde{q}_i \lead{\iota_i} = \tilde{q}_j \lead{\iota_j} \neq 0$ for
some $i
\neq j$, and suppose $|\lead{\iota_i}| \leq |\lead{\iota_j}|$.
If $\lead{\iota_i} = E_{a_ib_i} \otimes w_i$ and $\lead{\iota_j} = E_{a_jb_j}
\otimes
w_j$ for some $E_{a_ib_i}, E_{a_jb_j} \in \RR^{\nu \times \ell}$ and $w_i, w_j
\in
\axs$, then $E_{a_ib_i} \otimes \tilde{q}_i w_i = E_{a_jb_j} \otimes \tilde{q}_j
w_j$.
Therefore $a_i = a_j$, $b_i = b_j$, and $\tilde{q}_i w_i = \tilde{q}_j w_j$,
with $|w_i| \leq
|w_j|$.  This implies that $\lead{\iota_i}$ divides $\lead{\iota_j}$ on the
right, which contradicts the properties of the left Gr\"obner basis.
 Therefore the
leading monomials of the nonzero $q_i \iota_i$ are all
distinct, which implies by Lemma \ref{lem:leadOfSumGen}, that either each $q_j =
0$, or the leading
monomial of $p$ is the maximal
nonzero $\tilde{q}_i \lead{\iota_i}$. Further, uniqueness follows from 
linearity and from
Lemma \ref{lem:leadOfSumGen}.
\end{proof}

\subsection{Algorithm for Computing Reduced Left Gr\"obner Bases}

Let $\prec$ be a left monomial order on $\RR^{\nu \times \ell}\axs$.
Let
$I$ be the left module generated by polynomials $\iota_1, \ldots, \iota_{\mu}
\in
\RR^{\nu \times \ell}\axs$.
It is easy to show that inputing $\iota_1, \ldots, \iota_{\mu}$ into the
following algorithm computes a
reduced left
Gr\"obner basis for $I$.

\subsubsection{Reduced Left Gr\"obner Basis Algorithm}
\label{subsub:LGBAlgOG}
\begin{enumerate}
 \item Given: $\cG = \{ \iota_1, \ldots, \iota_{\mu}\}$.
\item\label{step:begin} If $0 \in \cG$, remove it.
Further, perform scalar multiplication so that each element of $\cG$ is monic.
\item
For each $\iota_i, \iota_j \in \cG$,
compare $\lead{\iota_i}$ with the terms of $\iota_j$.
\begin{enumerate}
 \item If $\lead{\iota_i}$ divides a term of $\iota_j$ on the right,
let
$q \in \axs$ and $\xi \in \RR$ be such that $\xi q\lead{\iota_i}$ is a
term in $\iota_j$. Replace $\iota_j$ with $\iota_j - \xi q \iota_i$.
Repeat (\ref{step:begin}).
\item If $\lead{\iota_i}$ does not divide any terms of any $\iota_j$
on the right for any $i
\neq j$,
stop and output $\cG$.
\end{enumerate}
\end{enumerate}

\subsection{Reduced Left Gr\"obner Bases are
\texorpdfstring{$\rC$}{[C]}-Bases}

For a well-chosen $\rC$ and $\prec_{\rC}$, a reduced left Gr\"obner basis
is actually a $\rC$-basis.

\begin{prop}
\label{prop:LGBisCBasis}
 Let $I \subset \RR^{1 \times \ell} \axs$ be a finitely-generated left module
with reduced left Gr\"obner basis $\iota_1, \ldots, \iota_{\mu}$ according to
some
left monomial order $\prec$ on $\RR^{1 \times \ell}\axs$.
Let $\rC$ be the right chip space defined by
\[\rC:= \operatorname{Span}(\{m \in \RR^{1 \times \ell}\axs \mid m \mbox{
proper right chip of a term of some } \iota_i \}),\]
and let $\prec_{\rC}$ be a $\rC$-order induced by $\prec$. Then
$(\{\iota_i\}_{i=1}^{\mu}, \emptyset)$ is a $\rC$-basis.
\end{prop}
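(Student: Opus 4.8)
The plan is to verify the two defining conditions of a $\rC$-basis for $(\{\iota_i\}_{i=1}^{\mu}, \emptyset)$: first that the $\iota_i$ all lie in $I \cap (\RR\axs_1 \rC \setminus \rC)$, are monic, and have distinct leading monomials; and second that this is a \emph{maximal} such set, while $I \cap \rC = \{0\}$ so that the empty set is trivially a maximal set of monic polynomials in $I \cap \rC$ with distinct leading monomials. The monicity and distinctness of leading monomials are immediate from the definition of a reduced left Gr\"obner basis. To see that each $\iota_i \in \RR\axs_1 \rC$, note that every \emph{proper} right chip of a term of $\iota_i$ lies in $\rC$ by construction of $\rC$; hence every term of $\iota_i$ is either in $\rC$ (if it equals one of its own proper right chips, i.e.\ has length making it a proper right chip of another $\iota_j$'s term — more carefully, if the term itself is a proper right chip of a longer term) or is $w \cdot m$ with $|w| = 1$ and $m$ a proper right chip, i.e.\ lies in $\RR\axs_1 \rC$. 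Actually the cleaner statement: every term $t$ of $\iota_i$ with $|t| \geq 1$ has the form $x_k m$ where $m$ is a proper right chip of $t$, hence $m \in \rC$ (it is a proper right chip of a term of $\iota_i$), so $t \in \RR\axs_1 \rC$; and the constant term (if any) is handled separately. So $\iota_i \in \RR\axs_1 \rC$. That $\iota_i \notin \rC$ follows because $\lead{\iota_i}$ cannot be a proper right chip of a term of any $\iota_j$ (for $j = i$ this is impossible since $\lead{\iota_i}$ is the top term; for $j \neq i$, reducedness forbids $\lead{\iota_i}$ from right-dividing any term of $\iota_j$), and every monomial in $\rC$ \emph{is} such a proper right chip.

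Next I would establish $I \cap \rC = \{0\}$. Suppose $p \in I \cap \rC$ is nonzero. By Lemma \ref{lem:lGBVerMemOG}, $\lead{p}$ is right-divisible by some $\lead{\iota_i}$; but $\lead{p} \in \rC$ means $\lead{p}$ is a proper right chip of a term of some $\iota_j$, and one checks this forces $\lead{\iota_i}$ to itself be a proper right chip of a term of $\iota_j$ — contradicting reducedness (if $i \neq j$) or minimality (if $i = j$, since $\lead{\iota_i}$ would be a proper right chip of a term of $\iota_i$, impossible as $\lead{\iota_i}$ is the leading, hence longest-or-maximal, term and proper right chips are strictly shorter in a degree-compatible order — here one leans on $\prec$ being left admissible and the structure of $\rC$). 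Hence no such $p$ exists, so $\emptyset$ is (vacuously) a maximal set of the required form in $I \cap \rC$.

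Finally, for maximality of $\{\iota_i\}$ as a set of monic polynomials in $I \cap (\RR\axs_1\rC \setminus \rC)$ with distinct leading monomials: by Lemma \ref{lem:lGBVerMemOG}, every nonzero $p \in I$ has $\lead{p}$ right-divisible by some $\lead{\iota_i}$, so $\lead{p}$ is determined up to this divisibility; if additionally $p \in \RR\axs_1 \rC$, then writing $\lead{p} = w \lead{\iota_i}$ with $w \in \axs$, the fact that $\lead{p} \in \RR\axs_1\rC \setminus \rC$ combined with Lemma \ref{lem:repRRaxsrC} forces $|w| = 0$ (since $\lead{\iota_i} \in \RR\axs_1\rC \setminus \rC$ as shown above, and the representation in Lemma \ref{lem:repRRaxsrC} is unique), so $\lead{p} = \lead{\iota_i}$. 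Thus every leading monomial of an element of $I \cap (\RR\axs_1\rC \setminus \rC)$ already occurs among the $\lead{\iota_i}$, so no new monic polynomial with a distinct leading monomial can be added — maximality holds. Combining all pieces, $(\{\iota_i\}_{i=1}^\mu, \emptyset)$ is a $\rC$-basis for $I$.

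**Main obstacle.** The delicate point is the interaction between the degree/right-chip combinatorics and the abstract left admissible order $\prec$: I need $\lead{\iota_i} \in \RR\axs_1\rC \setminus \rC$ and the uniqueness from Lemma \ref{lem:repRRaxsrC} to pin down $|w| = 0$, and I must be careful that $\rC$ as defined (spanned by \emph{proper} right chips) genuinely satisfies that $\RR\axs_1\rC \setminus \rC$ captures exactly the terms of the $\iota_i$ and their leading monomials. The bookkeeping — showing a term of $\iota_i$ that is itself a proper right chip of another $\iota_j$'s term lands in $\rC$ while $\lead{\iota_i}$ does not — is where reducedness must be invoked precisely, and is the step I expect to require the most care.
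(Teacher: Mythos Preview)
Your approach is correct but takes a different route from the paper. You verify the definition of a $\rC$-basis directly: after showing each $\iota_i \in \RR\axs_1\rC \setminus \rC$, you separately prove $I \cap \rC = \{0\}$ and then maximality of $\{\iota_i\}$ by combining Lemma~\ref{lem:lGBVerMemOG} with the uniqueness in Lemma~\ref{lem:repRRaxsrC}. The paper instead, after the same first step, cites Lemma~\ref{lem:lGBVerMemOG} (every $p \in I$ is uniquely $\sum_i q_i \iota_i$) and immediately applies the \emph{converse} direction of Lemma~\ref{lem:niceCBasis}, which says that any pair of monic polynomials with distinct leading monomials giving such a representation is automatically a $\rC$-basis. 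Your arguments for $I \cap \rC = \{0\}$ and for maximality are essentially a re-derivation of the mechanism packaged inside Lemma~\ref{lem:strLinInd} and the converse of Lemma~\ref{lem:niceCBasis}, so your route is more self-contained but longer, while the paper's is a two-line citation once those lemmas are in place. Regarding the obstacle you flag (the $i=j$ case for $\lead{\iota_i} \notin \rC$): it dissolves under the standard reducedness condition that $\lead{\iota_i}$ right-divides no non-leading term of any $\iota_j$, including $j=i$; if $\lead{\iota_i}$ were a proper right chip of a term $t$ of $\iota_i$ then $t$ is non-leading (being strictly longer) and $\lead{\iota_i}$ right-divides it, a contradiction --- no appeal to degree-compatibility of $\prec$ is needed.
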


Not that this algorithm outputs a basis with at most $\mu$ elements whose
degree is no greater than the inputted elements, that the and that the algorithm
is guaranteed to terminate in finite time.

\begin{proof}
First, $\iota_1, \ldots, \iota_{\mu} \in \RR\axs_1 \rC$ by construction.
Next, each leading monomial $\lead{\iota_i}$ must be in $\RR\axs_1 \rC
\setminus \rC$ since otherwise $\lead{\iota_i} \in \rC$, which implies that it
properly divides a term of some other $\iota_j$.
 By Lemma \ref{lem:lGBVerMemOG}, the left Gr\"obner basis satisfies
the conditions of Lemma \ref{lem:niceCBasis} to be a $\rC$-basis.
\end{proof}

A reduced left Gr\"obner basis is
a nice $\rC$-basis since it has no elements of $\rC$ in it.

\section{The Real Radical Algorithm}
\label{sub:rralg}

In some simple cases, as shown in \cite{CHKMN}, it is easy to verify whether a
left module is real. In general, however, an algorithmic approach is needed.
In \cite{chmn}, a Real Radical Algorithm is presented for computing the real
radical of any finitely-generated left ideal $J \subset \RR\axs$.
We now present a new Real Radical Algorithm
which extends the previous Real Radical Algorithm to finitely-generated left
modules $I \subset \RR^{1 \times
\ell}\axs$.  Further, using right chip spaces,
the new Real Radical Algorithm is much more efficient.

Let
$I \subset {\RR^{1 \times \ell} \axs}$ be the left module
generated by polynomials $\iota_1, \ldots, \iota_{\mu}$.
When $\{\iota_1, \ldots, \iota_{\mu}\}$ is inputted into the
following algorithm, the result is a reduced left Gr\"obner basis for $\rr{I}$.

\subsection{The Real Radical Algorithm}
\label{subsub:rrAlg}
\begin{enumerate}
\item Given: $\cG = \{\iota_1, \ldots, \iota_{\mu}\}$.
\item
Fix a degree lexicographic order on $\RR^{1 \times \ell}\axs$.
 Compute a reduced left Gr\"obner basis
from $\cG$, and set $\cG^{(0)}$ to be the resulting basis.

\item Let $i = 0$.

\item
\label{A1}
If $\cG^{(i)}$ only contains constants, then stop and output $\cG^{(i)}$.

\item
Let $\rC^{(i)}$ be the set of monomials which properly divide a term of
any of the polynomials in $\cG^{(i)}$ on the right.

\item For each $\vartheta_j \in \cG^{(i)}$, let $q_j$ be the polynomial
\[q_j = \sum_{m \in \rC^{(i)}} \alpha_{m,j}^{(i)} m\]
where the $\alpha_{m,j}^{(i)}$ are real-valued variables, and test whether or
not the polynomial
\begin{equation}
 \label{eq:sosInIi}
\sum_{\vartheta_j \in \cG^{(i)}} q_j^*\vartheta_j + \vartheta_j^*q_j
\end{equation}
is a nonzero sum of squares for some values of $\alpha_{m,j}^{(i)} \in \RR$.
See \cite{kp} for more on how to verify if a NC polynomial is
a sum of squares, and see \cite{HOSM} and \cite{CKP} for a computer algebra
package which will do this. Our problem is more complicated than a standard
sums of squares problem since we are dealing with a
polynomial with variable coefficients.  Therefore, we now spell out the details
of a sum of squares algorithm.

\textbf{SOS Algorithm}

\begin{enumerate}
\item Given: (\ref{eq:sosInIi}).
\item For each monomial in (\ref{eq:sosInIi}) which is
not in $(\rC^{(i)})^*
(\rC^{(i)})$, set the coefficient equal to $0$.  Solve the
resulting set of
linear equations in terms of the $\alpha_{m,j}^{(i)}$ and
reduce
(\ref{eq:sosInIi}) to have only terms in
$(\rC^{(i)})^*(\rC^{(i)})$.
 \item
Let $M_i$ be a vector whose entries
are all monomials in $\rC^{(i)}$.
If desired, technically we can pick a smaller vector $M_i$
by eliminating monomials of small degree.
(See \cite{kp}).
\item
Let $\cZ^{(i)}$ be space
\[
\cZ^{(i)} = \{ Z \mbox{ a real symmetric matrix} \mid M_i^* Z M_i = 0 \},
\]
and let $Z_{1}^{(i)}, \ldots, Z_{n^{(i)}}^{(i)}$ be a basis for $\cZ^{(i)}$.
\item For each $m^* \vartheta_j + \vartheta_j^* m$,
let $B_{m,j}^{(i)}$ be a symmetric matrix such that
\[m^* \vartheta_j + \vartheta_j^* m = M_i^* B_{m,j}^{(i)} M_i.\]
\item
In the linear pencil
\begin{equation}
\label{eq:linPenAlg}
  L_i(\alpha^{(i)}, \beta^{(i)}) = \sum_{m \in \rC^{(i)}} \sum_{\vartheta_j \in
\cG^{(i)}}
\alpha_{m,j}^{(i)} B_{m,j}^{(i)} + \sum_{k=1}^{n^{(i)}} \zeta_k^{(i)}
Z_{k}^{(i)},
\end{equation}
if there are any diagonal entries which are $0$, set all
entries in the corresponding row and column
to be $0$.  Use the resulting
linear equations to reduce the problem,
and delete the $0$ row and column from $L_i$.
Also delete the entry in $M_i$
with the same index as the deleted row and column.
Repeat this step until there are no diagonal entries in $L_i$ equal to $0$.
\item If we eventually get $L_i = 0$, stop and output
that there is no nonzero sum of squares.
\item Solve the linear matrix inequality
\[
L_i(\alpha^{(i)}, \beta^{(i)}) \succeq 0
\]
to see if there is a nonzero solution $(\alpha^{(i)}, \beta^{(i)})$.
\begin{enumerate}
\item If there is not, stop and output
that there is no nonzero sum of squares.
\item Otherwise, output the vector of polynomials
\[
 \sqrt{L(\alpha^{(i)}, \beta^{(i)})} M_i.
\]
\end{enumerate}
\end{enumerate}
\item If there is no nonzero sum of squares, stop and output $\cG^{(i)}$.
\item Otherwise, let $\phi_1,
\ldots, \phi_{r_i}$ be the entries of the outputted vector of polynomials.
Compute a reduced left Gr\"obner basis for the set $\cG^{(i)} \cup \{\phi_1,
\ldots, \phi_{r_i}\}$ and let $\cG^{(i+1)}$ be the resulting set.
Go to (\ref{A1}).
\end{enumerate}

\subsubsection{Properities of the Real Radical Algorithm}
\label{subsub:propsRrAlg}

We now prove Theorem \ref{thm:algorStops}, which presents
some appealing properties of the Real Algorithm
presented in $\S \ref{subsub:rrAlg}$.

\begin{proof}[Proof of Theorem \ref{thm:algorStops}]
 First, if $\deg(\iota_1), \ldots, \deg(\iota_{\mu}) \leq d$, since
we are using a degree lexicographic order to compute the reduced left Gr\"obner
basis, it is clear from the left Gr\"obner basis algorithm that the outputted
left Gr\"obner
basis also consists of polynomials with degree bounded by $d$.
Next, for the set $\rC^{(i)}$, if $\cG^{(i)}$ consists of polynomials of degree
bounded by $d$, then the only monomials which properly divide a monomial of
degree bounded by $d$ on the right must have degree less than $d$.  Therefore
the set $\rC^{(i)}$ has monomials of length at most $d-1$.
In particular, at each iteration, $\cG^{(i+1)}$ is a reduced left Gr\"obner
basis generated by $\cG^{(i)}$, which has polynomials of degree bounded by $d$,
and some
polynomials in the span of $\rC^{(i)}$.  Therefore, at each step, $\cG^{(i)}$
always consists of polynomials of degree bounded by $d$.
Finally,
the polynomial (\ref{eq:sosInIi}) is a sum of polynomials with degree bounded
by $2d-1$.  This all verifies the degree bounds in (\ref{degreed}).

Second, at each step we are adding polynomials in the span of
$\rC^{(i)}$ to
$\cG^{(i)}$ and then computing a reduced left Gr\"obner basis.  Further, each
monomial in $\rC^{(i)}$ is not divisible on the right by the leading monomial of
an element of $\cG^{(i)}$, so $\operatorname{Span} (\rC^{(i)}) \cap \RR\axs
\cG^{(i)} = \{0\}$.  Therefore
the left module
generated by $\cG^{(i+1)}$ must properly contain the left module generated by
$\cG^{(i)}$.
Since $\RR^{1 \times \ell}\axs_d$ is finite dimensional, this process must stop
eventually.  This proves (\ref{stops}).

Finally, at each step, consider finding a nonzero sum of squares of the form
(\ref{eq:sosInIi}).  Assume inductively that each $\cG^{(i)} \subset \rr{I}$.
This is true at the outset since $\cG^{(0)} \subset I \subset \rr{I}$. If
the SOS algorithm finds such a sum of squares,
then it is equal to
\begin{align}
\notag
M_i^* L_i(\alpha,\beta) M_i &= \left( \sqrt{L_i(\alpha,\beta)}M_i \right)^*
\left( \sqrt{L_i(\alpha,\beta)}M_i\right)
\\
\notag
&\in \RR^{\ell \times 1}\rr{I} +
\rr{I}^*\RR^{1 \times \ell}.
\end{align}
This implies that the outputted vector of polynomials
from the real radical algorithm are polynomials in $\rr{I}$, which gives
$\cG^{(i+1)} \subset \rr{I}$.

If the SOS Algorithm outputs that there is no sum of squares, by Proposition
\ref{prop:LGBisCBasis}, $(\cG^{(i)}, \emptyset)$ is a $\rC$-basis,
so by Theorem \ref{thm:reducedRealTest}, this is enough to show that the left
module generated by $\cG^{(i)}$ is real.  Since $\cG^{(i)} \subset \rr{I}$, this
implies that $\cG^{(i)}$ is a reduced left Gr\"obner basis for $\rr{I}$.
\end{proof}

\section{Acknowledgments}

Author was partly supported by J.W.\ Helton's National Science
Foundation grant DMS 1201498.  Thanks to J.W.\ Helton and Igor Klep for their 
comments and advice.

\newpage

\newpage

\printindex

\tableofcontents

\end{document}